\documentclass[11pt,reqno]{amsart}
\usepackage[letterpaper,margin=1in,footskip=0.25in]{geometry}
\usepackage{mathrsfs}
\usepackage{microtype}
\usepackage{mathtools}
\usepackage{enumitem}
\usepackage[noadjust]{cite}
\renewcommand{\citepunct}{;\penalty\citemidpenalty\ }
\usepackage{longtable,booktabs,multirow}
\usepackage[referable]{threeparttablex}
\usepackage{caption}

\usepackage{tikz-cd}

\PassOptionsToPackage{pdfusetitle,colorlinks,pagebackref,linktoc=all}{hyperref}
\usepackage{bookmark}
\hypersetup{citecolor=[HTML]{2E7E2A},linkcolor=[HTML]{800006},urlcolor=[HTML]{8A0087}}
\usepackage[hyphenbreaks]{breakurl}

\numberwithin{equation}{section}

\newtheorem{theorem}[equation]{Theorem}
\newtheorem{lemma}[equation]{Lemma}
\newtheorem{proposition}[equation]{Proposition}

\newtheorem{alphthm}{Theorem}

\theoremstyle{definition}
\newtheorem{definition}[equation]{Definition}
\newtheorem{axiom}[equation]{Axiom}
\newtheorem{notation}[equation]{Notation}

\theoremstyle{remark}
\newtheorem{remark}[equation]{Remark}
\newtheorem{examples}[equation]{Examples}

\newtheoremstyle{cited}{.5\baselineskip\@plus.2\baselineskip\@minus.2\baselineskip}{.5\baselineskip\@plus.2\baselineskip\@minus.2\baselineskip}{\itshape}{}{\bfseries}{\bfseries .}{5pt plus 1pt minus 1pt}{\thmname{#1}\thmnumber{ #2}\thmnote{ \normalfont#3}}
\theoremstyle{cited}
\newtheorem{citedprop}[equation]{Proposition}

\newtheoremstyle{citeddef}{.5\baselineskip\@plus.2\baselineskip\@minus.2\baselineskip}{.5\baselineskip\@plus.2\baselineskip\@minus.2\baselineskip}{}{}{\bfseries}{\bfseries .}{5pt plus 1pt minus 1pt}{\thmname{#1}\thmnumber{ #2}\thmnote{ \normalfont#3}}
\theoremstyle{citeddef}
\newtheorem{citeddef}[equation]{Definition}
\newtheorem{citedaxiom}[equation]{Axiom}
\newtheorem{citedaxioms}[equation]{Axioms}

\newtheoremstyle{step}{.25\baselineskip\@plus.1\baselineskip\@minus.1\baselineskip}{.25\baselineskip\@plus.1\baselineskip\@minus.1\baselineskip}{\itshape}{}{\bfseries}{\bfseries .}{5pt plus 1pt minus 1pt}{\thmname{#1}\thmnumber{ #2}\thmnote{ \normalfont(#3)}}
\theoremstyle{step}
\newtheorem{step}{Step}[equation]

\DeclareMathOperator{\Ann}{Ann}
\DeclareMathOperator{\Ass}{Ass}
\DeclareMathOperator{\Ext}{Ext}
\DeclareMathOperator{\Frac}{Frac}
\DeclareMathOperator{\Hom}{Hom}
\DeclareMathOperator{\Min}{Min}
\DeclareMathOperator{\Proj}{Proj}
\DeclareMathOperator{\Spec}{Spec}
\DeclareMathOperator{\Sym}{Sym}
\DeclareMathOperator{\bigheight}{bight}
\DeclareMathOperator{\height}{ht}
\DeclareMathOperator{\im}{im}
\DeclareMathOperator{\prdiv}{div}
\DeclareMathOperator*{\ulim}{ulim}

\newcommand{\fa}{\mathfrak{a}}
\newcommand{\fm}{\mathfrak{m}}
\newcommand{\fn}{\mathfrak{n}}
\newcommand{\fp}{\mathfrak{p}}
\newcommand{\fq}{\mathfrak{q}}
\newcommand{\BB}{\mathbf{B}}
\newcommand{\CC}{\mathbf{C}}
\newcommand{\QQ}{\mathbf{Q}}
\newcommand{\RR}{\mathbf{R}}
\newcommand{\ZZ}{\mathbf{Z}}
\newcommand{\cO}{\mathcal{O}}
\newcommand{\cJ}{\mathcal{J}}
\newcommand{\cW}{\mathcal{W}}
\newcommand{\fD}{\mathfrak{D}}
\newcommand{\sL}{\mathscr{L}}
\newcommand{\cl}{\mathsf{cl}}
\newcommand{\epf}{\mathsf{epf}}
\newcommand{\ronef}{\mathsf{r1f}}
\newcommand{\wepf}{\mathsf{wepf}}

\hyphenation{Hoch-ster}

\providecommand\given{}
\newcommand\SetSymbol[1][]{\nonscript\:#1\vert\allowbreak\nonscript\:\mathopen{}}
\DeclarePairedDelimiterX\Set[1]\{\}{\renewcommand\given{\SetSymbol[\delimsize]}#1}

\begin{document}
\title{Uniform bounds on symbolic powers in regular rings}
\author{Takumi Murayama}
\address{Department of Mathematics\\Purdue University\\West Lafayette, IN
47907-2067\\USA}
\email{\href{mailto:murayama@purdue.edu}{murayama@purdue.edu}}
\urladdr{\url{https://www.math.purdue.edu/~murayama/}}

\thanks{This material is based upon work supported by the National Science
Foundation under Grant No.\ DMS-1902616}
\subjclass[2020]{Primary 13A15, 13H05; Secondary 14G45, 13C14, 13A35, 14F18}

\keywords{uniform bounds, symbolic powers, regular rings,
big Cohen--Macaulay algebras, closure operations, test ideals}

\makeatletter
  \hypersetup{
    pdfsubject=\@subjclass,pdfkeywords=\@keywords
  }
\makeatother

\begin{abstract}
  We prove a uniform bound on the growth of symbolic powers of
  arbitrary (not necessarily radical)
  ideals in arbitrary (not necessarily excellent)
  regular rings of all characteristics.
  This gives a complete answer to a
  question of Hochster and Huneke.
  In equal characteristic, this result was proved by
  Ein, Lazarsfeld, and Smith and by Hochster and Huneke.
  For radical ideals in excellent regular rings of mixed characteristic, this
  result was proved by Ma and Schwede.
  We also prove a generalization of these results involving products of
  various symbolic powers and a uniform bound for regular local rings
  related to a conjecture of Eisenbud and Mazur, which are completely new in
  mixed characteristic.
  In equal characteristic, these results are due to
  Johnson and to Hochster--Huneke, Takagi--Yoshida, and Johnson, respectively.
\end{abstract}

\maketitle

\setcounter{tocdepth}{2}
{\hypersetup{hidelinks}\tableofcontents}

\section{Introduction}\label{sect:intro}
Let $R$ be a Noetherian ring, and let $I \subseteq R$ be an ideal.
For all integers $n \ge 1$, the \textsl{$n$-th symbolic power of $I$} is the
ideal
\begin{equation}\label{eq:hhsymbdef}
  I^{(n)} \coloneqq
  \bigcap_{\fp \in \Ass_R(R/I)} I^nR_\fp \cap R.
\end{equation}
\par Hartshorne asked when
the $\fp$-adic and $\fp$-symbolic topologies are equivalent, where $\fp
\subseteq R$ is a prime ideal and the $\fp$-symbolic topology is the topology
defined by the symbolic powers $\fp^{(n)}$ \cite[p.\ 160]{Har70}.
One motivation for Hartshorne's question was a conjecture of Grothendieck
asking whether the modules $\Hom_R(R/J,H^i_J(M))$ are finitely generated for
finitely generated $M$ \cite[Expos\'e XIII, Conjecture 1.1]{SGA2}.
Hartshorne answered this question when $\dim(R/\fp) = 1$ \cite[Proposition
7.1]{Har70}, and Schenzel gave a complete characterization for when the $I$-adic
and $I$-symbolic topologies are equivalent on a Noetherian ring $R$ for
arbitrary ideals $I \subseteq
R$ \citeleft\citen{Sch85}\citemid Theorem 1\citepunct
\citen{Sch86}\citemid Theorem 3.2\citeright\ (see also \cite[\S2.1]{Sch98}).
Schenzel's result implies for example that
if $R$ is a domain whose
local rings are analytically irreducible, then the $\fp$-adic and $\fp$-symbolic
topologies on $R$ are equivalent for every prime ideal $\fp \subseteq R$
(see \cite[p.\ 545]{HKV15}).
This latter result is originally due to Zariski \cite[Lemma 3 on p.\ 33]{Zar51}
(see also \cite[Proposition 3.11]{Ver88}).
\par In \cite[Main Theorem 3.3]{Swa00}, Swanson proved that in fact, when the
$I$-adic and $I$-symbolic topologies on $R$ are equivalent,
there exists an integer $k$ (depending on $I$) such that
\begin{equation}\label{eq:elsincl}
  I^{(kn)} \subseteq I^n
\end{equation}
for all $n \ge 1$.
When $R$ is regular of equal characteristic, Ein, Lazarsfeld, and Smith
\cite[Theorem
2.2 and Variant on p.\ 251]{ELS01} and Hochster and Huneke \cite[Theorems 2.6
and 4.4$(a)$]{HH02} proved that \eqref{eq:elsincl} holds for all ideals $I
\subseteq R$ and all $n \ge 1$, where $k$ can be taken to be
the largest
analytic spread of the ideals $IR_\fp \subseteq R_\fp$ as $\fp$ ranges over the
associated primes of $R/I$.
In particular, when $R$ is a finite-dimensional regular ring of equal
characteristic, one can take $k = \dim(R)$ by \cite[Proposition 5.1.6]{SH06},
and hence there is a uniform $k$ that does not depend on
$I$.\medskip
\par In \cite[p.\ 368]{HH02}, Hochster and Huneke asked whether the results of
Ein--Lazarsfeld--Smith and Hochster--Huneke hold in mixed characteristic.
Ma and Schwede answered their question when $R$ is excellent (or, more
generally, has geometrically reduced formal fibers) and $I$ is radical
\cite[Theorem 7.4]{MS18}.
However, the cases for (I) arbitrary ideals (i.e.\ not necessarily radical), and
(II) non-excellent rings remained open.
\subsection{Main theorem}
\par Our main result gives a complete answer to Hochser and Huneke's question by proving
the following uniform bound on the growth of
symbolic powers of arbitrary ideals in arbitrary regular rings of all
characteristics.
This resolves the remaining
cases of Hochster and Huneke's question, namely when $I$ is an arbitrary ideal
(i.e.\ not necessarily radical) and $R$ is not necessarily excellent.
\begin{alphthm}\label{thm:mainelshhms}
  Let $R$ be a regular ring, and let $I \subseteq R$ be an ideal.
  Let $h$ be the largest analytic spread of the ideals $IR_\fp \subseteq R_\fp$,
  where $\fp$ ranges over all associated primes $\fp$ of $R/I$.
  Then, for all $n \ge 1$, we have
  \[
    I^{(hn)} \subseteq I^n.
  \]
\end{alphthm}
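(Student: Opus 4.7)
The plan is to follow the Ein--Lazarsfeld--Smith/Hochster--Huneke proof, in the form adapted to mixed characteristic by Ma--Schwede, bracketing $I^{(hn)}$ and $I^n$ through a test ideal / closure operation constructed from a big Cohen--Macaulay algebra. Writing $\fa_\bullet$ for the graded system $\fa_m = I^{(m)}$, the target is a chain
\[
  I^{(hn)} \;\subseteq\; \tau_\cl\bigl(\fa_\bullet^{hn}\bigr) \;\subseteq\; \tau_\cl\bigl(\fa_\bullet^{h}\bigr)^{n} \;\subseteq\; I^n,
\]
whose three inclusions are, respectively, a tautological containment of $\fa_{hn}$ in its own (asymptotic) test ideal, a Demailly--Ein--Lazarsfeld-style subadditivity in $n$, and a Skoda-type bound in which the analytic-spread hypothesis $\ell(IR_\fp) \le h$ for $\fp \in \Ass_R(R/I)$ is used. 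In equal characteristic, $\tau_\cl$ is the asymptotic multiplier or tight-closure test ideal; in mixed characteristic it is the big Cohen--Macaulay test ideal coming from a perfectoid big Cohen--Macaulay $R^+$-algebra $\BB$, defined via one of the closure operations $\epf$, $\ronef$, $\wepf$ developed earlier in the paper.

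The first step is a reduction to a complete regular local ring. The containment $I^{(hn)} \subseteq I^n$ can be checked after localizing at each $\fq \in \Ass_R(R/I^n)$, placing us in a regular local ring where the analytic-spread hypothesis is preserved because the relevant associated primes of $R/I$ are now contained in $\fq$. Since $R$ is not assumed excellent, passing from the localization to its completion cannot use off-the-shelf statements about symbolic powers: one must explicitly track the behavior of $I^{(n)}$, of $\Ass_R(R/I)$, and of the analytic spread under faithfully flat completion (or henselization), invoking the compatibility results set up in the earlier sections of the paper.

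In the complete regular local setting, one constructs $\cl$ and $\tau_\cl$ from a perfectoid big Cohen--Macaulay $R^+$-algebra $\BB$. Two key properties must be established. First, in the regular ring $R$ the closure $\cl$ is trivial on ideals, so $(I^n)^{\cl} = I^n$; combined with the Skoda-type theorem for $\tau_\cl$, this yields $\tau_\cl(\fa_\bullet^{h}) \subseteq I$. Second, $\tau_\cl$ satisfies a subadditivity property $\tau_\cl(\fa_\bullet^{hn}) \subseteq \tau_\cl(\fa_\bullet^{h})^{n}$ for graded systems. The first property comes essentially for free from $\BB$ being a big Cohen--Macaulay $R$-algebra; the second requires a genuine construction and is where the Ma--Schwede argument already concentrates its effort in the radical/excellent case.

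The genuinely new obstacles are the two features that separate this theorem from Ma--Schwede: permitting $I$ to be non-radical, which forces the embedded primes of $\Ass_R(R/I)$ to be carried through both the analytic-spread bookkeeping and the Skoda step, and dispensing with excellence on $R$, which invalidates the usual move of replacing $R$ by its completion. I expect the hard part to be the second, since it is precisely what controls whether the chain of inclusions above descends faithfully from $\hat R$ back to $R$; this is plausibly handled by taking $\cl$ to be a closure (likely $\wepf$) specifically engineered so that the test-ideal formalism and its subadditivity survive the lack of geometrically reduced formal fibers.
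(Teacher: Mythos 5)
Your proposal outlines the Ein--Lazarsfeld--Smith/Ma--Schwede strategy, which is indeed one of the two proof routes the paper takes (Part II), but it has a misidentification of where the real difficulty lies, and it glosses over the technical machinery that makes the argument actually go through in mixed characteristic for non-radical ideals.

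You locate the main obstacle in the completion step, worrying that non-excellence prevents descent from $\widehat{R}$ to $R$. In fact the reduction to complete regular local rings is clean: containments between ideals and their extensions check after faithfully flat base change, and associated primes, analytic spreads, and symbolic powers all behave predictably (Lemma 2.3 of the paper handles this in four routine steps). The place where the argument genuinely gets hard is your third inclusion, $\tau_\cl(\fa_\bullet^h) \subseteq I$, the Skoda/``not too big'' step. You describe this as coming ``essentially for free'' from $\BB$ being big Cohen--Macaulay, but it does not. The reason it is hard is precisely the combination you flag as merely bookkeeping: after completion, a radical $I$ may no longer be radical, so one must prove the Skoda bound for arbitrary ideals; and in mixed characteristic no single version of test ideal is currently known to localize at arbitrary primes. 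The paper's Theorem D threads this needle by exploiting that Robinson's test ideal can be written as a sum of big Cohen--Macaulay test ideals of divisor pairs (Lemma 5.6), comparing those to the Hacon--Lamarche--Schwede $+$-test ideals (which \emph{do} restrict along open embeddings and satisfy a Skoda theorem), and then localizing at a carefully chosen principal element $x_\ell$ rather than at the prime $\fp_\ell$ itself. No one of these test ideal theories suffices alone; playing them against one another is the key idea you would need.

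Two smaller points. First, your first inclusion $I^{(hn)} \subseteq \tau_\cl(\fa_\bullet^{hn})$ is not tautological; in the paper it rests on a Skoda-type result (Proposition 5.8, citing [MS, Lemma 6.6]) together with regularity of $R$. Second, you mix together the closure operations $\epf$/$\wepf$/$\ronef$ with the test-ideal formalism, but in the paper these belong to a genuinely different, logically independent proof (Part I). That proof does not use test ideals at all: for $u \in I^{(hn)}$ it adjoins an $n$-th root $u^{1/n}$ to pass to a module-finite extension $R'$, applies the Brian\c{c}on--Skoda theorem for $\epf$ on $R'$, then uses R.G.'s theorem that Dietz closures satisfying the algebra axiom are captured by a big Cohen--Macaulay algebra $B$, and finally exploits faithful flatness of $R \to B$ to clear the multiplier $c$ and contract $u \in I^n B \cap R = I^n$. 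That route sidesteps subadditivity and Skoda entirely and is quite a bit shorter than what you sketch; you may find it instructive to compare the two.
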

See \cite[Definition 5.1.5]{SH06} for the definition of analytic spread.
Note that
\[
  h \le \bigheight(I) \coloneqq \max_{\fp \in
  \Ass_R(R/I)}\bigl\{\height(\fp)\bigr\} \le \dim(R)
\]
by \cite[Proposition 5.1.6]{SH06}.
Thus, Theorem
\ref{thm:mainelshhms} implies all finite-dimensional
regular rings satisfy the uniform symbolic topology property of Huneke, Katz,
and Validashti \citeleft\citen{HKV09}\citemid Question on p.\ 337\citepunct
\citen{HKV15}\citemid p.\ 543\citeright.\medskip
\par We provide two proofs of Theorem \ref{thm:mainelshhms} in this paper.
In the mixed characteristic setting, where the full statement of Theorem
\ref{thm:mainelshhms} was not known, both proof strategies use Scholze's theory of
perfectoid spaces \cite{Sch12}.
Andr\'e and Bhatt applied perfectoid techniques to commutative
algebra in mixed characteristic
to resolve Hochster's direct summand conjecture \cite{And18a,And18b}
(conjectured in \cite[p.\ 25]{Hoc73}) and de Jong's derived variant of the direct
summand conjecture \cite{Bha18}, respectively.
\par Our first proof of Theorem \ref{thm:mainelshhms}
uses axiomatic closure operations in the sense of
\cite{Die10,RG18} analogous to the theory of tight closure \cite{HH90} in
positive characteristic.
Tight closure was used by Hochster and Huneke
\cite{HH02} to prove Theorem \ref{thm:mainelshhms} in
equal characteristic.
Our closure-theoretic proof of Theorem \ref{thm:mainelshhms}
provides a short, axiomatic proof of Theorem
\ref{thm:mainelshhms} that is almost characteristic free and
unifies all known cases of Theorem
\ref{thm:mainelshhms}.
See \S\ref{sect:closureproofdesc} for more discussion.
\par Our second proof of Theorem \ref{thm:mainelshhms}
uses perfectoid/big Cohen--Macaulay test ideals.
Perfectoid test ideals were introduced by Ma and Schwede \cite{MS18} to prove
Theorem \ref{thm:mainelshhms} for radical ideals in excellent regular rings in
mixed characteristic.
The strategy in \cite{MS18} adapts the strategy of Ein, Lazarsfeld, and Smith
\cite{ELS01} using multiplier ideals on smooth complex varieties.
A crucial difference between the approach in \cite{MS18} and in this paper is
that we use a different definition for perfectoid/big Cohen--Macaulay test
ideals due to Robinson \cite{Rob}.
Robinson's definition has the advantage of working in arbitrary characteristic
and not using almost mathematics.
Moreover, we show that Robinson's test ideals can be compared to other notions
of test ideals \cite{MS,PRG,ST,HLS}, in particular to the $+$-test ideals
introduced by Hacon, Lamarche, and Schwede \cite{HLS}.
This flexibility is a key component of our test ideal-theoretic proof of
Theorem \ref{thm:mainelshhms}, since in mixed characteristic,
only the $+$-test ideals from \cite{HLS}
are known to be compatible with open embeddings (see
$(\ref{hlsideal:localization})$ below)
and to satisfy a Skoda-type theorem (see $(\ref{hlsideal:skoda})$ below).
See \S\ref{sect:intropartii} for more discussion.
\subsection{A generalization and a containment for regular local rings}
\par We in fact show the following more general version of Theorem
\ref{thm:mainelshhms}, again for
arbitrary ideals in arbitrary regular rings of all characteristics.
When the $s_i$ are not all zero, this result is completely new in mixed
characteristic, even for radical ideals in excellent regular rings.
In equal characteristic, the special case of the result below when all the $s_i$
are equal is due to Hochster and Huneke \cite[Theorems 2.6 and 4.4$(a)$]{HH02}.
The full statement in equal characteristic is due to 
Johnson \cite[Theorem 4.4]{Joh14}.
\begin{alphthm}\label{thm:mainjohnson}
  Let $R$ be a regular ring, and let $I \subseteq R$ be an ideal.
  Let $h$ be the largest analytic spread of $IR_\fp$, where $\fp$ ranges over
  all associated primes $\fp$ of $R/I$.
  Then, for all $n \ge 1$ and for all non-negative integers
  $s_1,s_2,\ldots,s_n$ with $s =
  \sum_{i=1}^n s_i$, we have
  \begin{equation}\label{eq:mainjohnsonincl}
    I^{(s+nh)} \subseteq \prod_{i=1}^n I^{(s_i+1)}.
  \end{equation}
\end{alphthm}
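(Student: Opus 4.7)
My plan is to extend the proof of Theorem~\ref{thm:mainelshhms} to non-uniform exponents via the splitting $s + nh = \sum_{i=1}^n (s_i + h)$. Let $\fa^{\bullet} = \{I^{(k)}\}_{k\ge 1}$ denote the graded sequence of symbolic powers and, for a non-negative real $c$, write $\tau(c \cdot \fa^{\bullet})$ for the asymptotic BCM/$+$-test/closure-theoretic ideal of $c \cdot \fa^{\bullet}$, defined (as in the proof of Theorem~\ref{thm:mainelshhms}) as the stable value of $\tau(R, \fa_m^{c/m})$ for $m$ sufficiently divisible. The machinery used to prove Theorem~\ref{thm:mainelshhms} should supply three properties of this asymptotic construction: \emph{(i) containment}, $I^{(k)} \subseteq \tau(k \cdot \fa^{\bullet})$ for every $k \ge 1$; \emph{(ii) subadditivity}, $\tau\bigl((c_1 + c_2) \cdot \fa^{\bullet}\bigr) \subseteq \tau(c_1 \cdot \fa^{\bullet}) \cdot \tau(c_2 \cdot \fa^{\bullet})$; and \emph{(iii) symbolic Skoda}, $\tau\bigl((a+h) \cdot \fa^{\bullet}\bigr) \subseteq I^{(a+1)}$ for every $a \ge 0$.

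With (i)--(iii) in hand, the proof of \eqref{eq:mainjohnsonincl} is a single chain of inclusions obtained by decomposing $s + nh = \sum_{i=1}^n (s_i + h)$:
\[
  I^{(s + nh)} \subseteq \tau\bigl((s + nh) \cdot \fa^{\bullet}\bigr) \subseteq \prod_{i=1}^n \tau\bigl((s_i + h) \cdot \fa^{\bullet}\bigr) \subseteq \prod_{i=1}^n I^{(s_i + 1)}.
\]
Property (iii) would typically be proved by iterating a one-step Skoda bound $\tau(c \cdot \fa^{\bullet}) \subseteq I^{(1)} \cdot \tau\bigl((c-1) \cdot \fa^{\bullet}\bigr)$ for $c \ge h$ exactly $a+1$ times and then using $(I^{(1)})^{a+1} \subseteq I^{(a+1)}$, which follows from the multiplicativity $I^{(c)} \cdot I^{(d)} \subseteq I^{(c+d)}$ of symbolic powers. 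Note that taking all $s_i = 0$ in the above chain recovers the radical-ideal case of Theorem~\ref{thm:mainelshhms}, so this approach is a refinement of the Theorem~\ref{thm:mainelshhms} argument rather than a separate one.

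The hard part will be ingredient (iii) in the generality required here. The ordinary Skoda theorem for a single ideal $\fb$ of analytic spread $\le h$ lifts to a statement about the graded system $\fa^{\bullet}$ only under favorable hypotheses, and in non-excellent mixed characteristic the choice of asymptotic test ideal is constrained: the $+$-test ideals of Hacon, Lamarche, and Schwede --- which the introduction singles out as the only test ideals in this generality known both to satisfy a Skoda-type identity and to be compatible with open embeddings --- appear to be the natural tool, alongside the closure-theoretic construction of Part~I. Once (i)--(iii) have been established in this generality (which is essentially the content of the proof of Theorem~\ref{thm:mainelshhms}), the exponent splitting $s + nh = \sum (s_i + h)$ together with the subadditivity step is the only genuinely new input.
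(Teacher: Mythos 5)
Your chain of inclusions is precisely the skeleton of the paper's Part~\ref{part:elshhms} (test-ideal) proof, with the same exponent splitting $s+nh=\sum_{i=1}^n(s_i+h)$; the paper implements it with test ideals $\tau_B(R,[\underline{f}]^t)$ for a fixed generating set $\underline{f}$ of $I^{(s+nh)}$, so no asymptotic stabilization over $m$ is actually needed, and the paper also supplies a logically independent, and shorter, closure-theoretic proof in Part~\ref{part:closure-elshhms}. Two items in your sketch need to be supplied or corrected. First, one must reduce to a complete regular local ring whose localizations at the associated primes of $R/I$ have infinite residue fields (Lemma~\ref{lem:hh02reductions}) before any BCM/$+$-test-ideal machinery is available; this reduction is also what forces the arbitrary-ideal case, since completion can destroy radicality. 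Second, and more seriously, your proposed proof of ingredient~(iii) via the iterated one-step bound $\tau(c\cdot\fa^\bullet)\subseteq I\cdot\tau((c-1)\cdot\fa^\bullet)$ for $c\ge h$ does not follow from applying a Skoda theorem to the ideal $I^{(M)}$ or $I^M$: the Skoda threshold for an ideal $\fb$ is governed by the analytic spread of $\fb$ itself, and for $\fb=I^{(M)}$ that quantity has nothing to do with $h$. What Theorem~\ref{thm:ty23like} (and Proposition~\ref{prop:ty23likechar0} in characteristic zero) actually does is check the containment after inverting a suitable $x_\ell\notin\fp_\ell$ for each associated prime $\fp_\ell$ of $R/I$ --- on that principal open, $I^{(M)}$ becomes $I^M$ --- and then replace $I$ locally by a minimal reduction $J_\ell\subseteq I$ generated by at most $h$ elements, so that Skoda with threshold $h$ applies to $J_\ell$ rather than to $I^{(M)}$. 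This localize-and-reduce step is exactly where the compatibility with open embeddings that you flagged is consumed; without it, the threshold $h$ in your one-step Skoda bound is not justified, and (iii) does not follow.
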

As a consequence, many known results on symbolic powers in regular rings of
equal characteristic can now be extended to regular rings of arbitrary
characteristic.
These include results on Grifo's stable version of Harbourne's conjecture
\cite[Conjecture 2.1]{Gri20}
and resurgence.
For example, the results in \cite[Theorem 2.5]{Gri20},
\cite[Theorems 3.2 and 3.3]{GHM20}, and
\cite[Theorem 4.5, Corollary 4.8, Theorem 4.12, and
Corollary 4.13]{DPD21}
(see \cite[Remark 4.23]{DPD21}) now hold for arbitrary regular rings.
Theorem \ref{thm:mainelshhms} follows
from Theorem \ref{thm:mainjohnson} by setting
$s_i = 0$ for all $i$.\medskip
\par We also show the following uniform bound on the growth of symbolic powers
of arbitrary ideals in arbitrary
regular local rings of all characteristics.
This result is related to a conjecture of Eisenbud and Mazur \cite[p.\
190]{EM97}, which asks whether for a regular local ring $(R,\fm)$ of equal
characteristic zero, one has $I^{(2)} \subseteq \fm I$ for every unmixed ideal
$I$.
This result is completely new in mixed characteristic, even for radical
ideals in excellent regular rings.
In equal characteristic, special cases of the result below are due to
Hochster and Huneke \cite[Theorems 3.5 and 4.2(1)]{HH07} and to Takagi and
Yoshida \cite[Theorems 3.1 and 4.1]{TY08}.
The full statement in equal characteristic is due to Johnson \cite[Theorem
4.3(2)]{Joh14}.
\begin{alphthm}\label{thm:mainjohnsonhhty}
  Let $(R,\fm)$ be a regular local ring, and let $I \subseteq R$ be an ideal.
  Let $h$ be the largest analytic spread of $IR_\fp$, where $\fp$ ranges over
  all associated primes $\fp$ of $R/I$.
  Then, for all $n \ge 1$ and for all non-negative
  integers $s_1,s_2,\ldots,s_n$ with $s =
  \sum_{i=1}^n s_i$, we have
  \begin{equation}\label{eq:mainjohnsonhhtyincl}
    I^{(s+nh+1)} \subseteq \fm \cdot \prod_{i=1}^n I^{(s_i+1)}.
  \end{equation}
\end{alphthm}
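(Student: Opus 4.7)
The plan is to deduce Theorem~\ref{thm:mainjohnsonhhty} from Theorem~\ref{thm:mainjohnson} by a ``Skoda with one more generator'' step that exploits the fact that $(R,\fm)$ is local. In equal characteristic, this is precisely the mechanism by which Hochster--Huneke, Takagi--Yoshida, and Johnson bootstrap containments of the form $I^{(s+nh)} \subseteq \prod I^{(s_i+1)}$ to their sharper local counterparts; my plan is to mimic the same bootstrap in mixed characteristic using the test-ideal framework underlying the second proof of Theorem~\ref{thm:mainelshhms}.

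First, I would reduce to the case $I \subseteq \fm$; otherwise $I$ contains a unit and both sides of \eqref{eq:mainjohnsonhhtyincl} are equal to $R$. Setting $\fa = \prod_{i=1}^n I^{(s_i+1)}$, I would retrace the proof of Theorem~\ref{thm:mainjohnson} with the exponent on the ``reduction'' ideal raised by one: after a suitable generic reduction and base change, one exhibits an $h$-generated ideal $\mathfrak{b} \subseteq I$ for which the mixed-characteristic test ideal $\tau$ (Robinson's BCM test ideal or the Hacon--Lamarche--Schwede $+$-test ideal) satisfies
\[
  I^{(s+nh+1)} \;\subseteq\; \tau\bigl(\fa\cdot\mathfrak{b}^{h+1}\bigr).
\]

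Second, I would apply the Skoda-type theorem for $\tau$ once: since $\mathfrak{b}$ is generated by $h$ elements and $h+1 > h$,
\[
  \tau\bigl(\fa\cdot\mathfrak{b}^{h+1}\bigr) \;\subseteq\; \mathfrak{b}\cdot\tau\bigl(\fa\cdot\mathfrak{b}^{h}\bigr) \;\subseteq\; \mathfrak{b}\cdot\fa \;\subseteq\; \fm\cdot\fa,
\]
where the middle containment is the test-ideal content underlying Theorem~\ref{thm:mainjohnson} and the last uses $\mathfrak{b} \subseteq I \subseteq \fm$. Concatenating with the first step yields \eqref{eq:mainjohnsonhhtyincl}.

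The main obstacle is to verify that the mixed-characteristic test ideal $\tau$ satisfies the Skoda theorem with the correct placement of the auxiliary factor $\fa$ and that the ``boundary'' containment $I^{(s+nh+1)} \subseteq \tau(\fa\cdot\mathfrak{b}^{h+1})$ can be set up in the same manner as the corresponding containment $I^{(s+nh)} \subseteq \tau(\fa\cdot\mathfrak{b}^h)$ in the proof of Theorem~\ref{thm:mainjohnson}. In equal characteristic both ingredients are classical properties of multiplier and tight closure test ideals, but in mixed characteristic they rely on the compatibilities between Robinson's test ideals and the Hacon--Lamarche--Schwede $+$-test ideals flagged in the introduction. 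Once the Skoda theorem for the appropriate test ideal is in hand, the extra factor of $\fm$ emerges essentially for free, simply because $\mathfrak{b}$ is proper.
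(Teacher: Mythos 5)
Your proposal departs from both of the paper's proofs, and it has a genuine gap that, if ignored, would actually prove a false statement.

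The conclusion of your chain is $I^{(s+nh+1)} \subseteq \mathfrak{b}\cdot\fa \subseteq I\cdot\prod_{i=1}^n I^{(s_i+1)}$, since $\mathfrak{b}\subseteq I$. Specialize to $n=1$, $s_1=0$, $h=1$ (say $I=\fp$ a height-one prime): you would obtain $\fp^{(2)}\subseteq \fp\cdot\fp = \fp^2$, which is false whenever symbolic and ordinary squares differ. So one of the earlier containments must fail. The culprit is the opening step $I^{(s+nh+1)} \subseteq \tau\bigl(\fa\cdot\mathfrak{b}^{h+1}\bigr)$: no global $h$-generated ideal $\mathfrak{b}\subseteq I$ has this property. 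In the paper's proof of Theorem~\ref{thm:ty23like}, the reduction ideals $J_\ell$ with at most $h$ generators exist only \emph{locally at each associated prime} $\fp_\ell$, and the argument passes to principal localizations $R_{x_\ell}$ where ordinary and symbolic powers agree. There is no single global $\mathfrak{b}$ that one can feed into a Skoda statement. (The ``not too small'' property gives $\fa\cdot\mathfrak{b}^{h+1}\subseteq\tau\bigl((\fa\cdot\mathfrak{b}^{h+1})^1\bigr)$, which is an entirely different, and useless, containment.) In addition, your middle Skoda step $\tau(\fa\cdot\mathfrak{b}^{h+1})\subseteq\mathfrak{b}\cdot\tau(\fa\cdot\mathfrak{b}^h)$ uses a ``Skoda with an auxiliary factor'' that is not among the Skoda-type statements the paper has available: \cite[Theorem 6.6]{HLS} and \cite[Theorem 9.6.36]{Laz04b} peel off the ideal of the pair itself, $\tau(\fa^t)=\fa\cdot\tau(\fa^{t-1})$, not a factor of a product; and it is unclear what $\tau(\fa\cdot\mathfrak{b}^{h+1})$ even denotes in the Robinson/HLS frameworks without specifying whether $\fa$ and $\mathfrak{b}$ are carried as separate exponentiated ideals.

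The paper's two proofs of Theorem~\ref{thm:mainjohnsonhhty} instead extract the factor of $\fm$ through a \emph{threshold integer} $l$, not through a reduction of $I$. In the closure-theoretic proof, $l$ is the largest integer such that the composite $R\to B\xrightarrow{u^{l/(s+nh+1)}\cdot-}B$ splits; one applies a splitting to the relation $u^{l/(s+nh+1)}u = u^{(l+1)/(s+nh+1)}u^{(s+nh)/(s+nh+1)}$ and the factor $\varphi\bigl(u^{(l+1)/(s+nh+1)}\bigr)\in\fm$ (by maximality of $l$) supplies the $\fm$. In the test-ideal proof, $l$ is the largest integer with $\tau_B\bigl(R,[\underline{f}]^{l/(s+nh+1)}\bigr)=R$ where $[\underline{f}]$ generates $I^{(s+nh+1)}$; the product ideal is written with one extra $(l+1)/(s+nh+1)$ slot, subadditivity (Theorem~\ref{thm:subadditivity}) is applied $n$ times, and the factor $\tau_B\bigl(R,[\underline{f}]^{(l+1)/(s+nh+1)}\bigr)\subseteq\fm$ by maximality of $l$. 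In both cases the factor of $\fm$ is produced by crossing a jumping number, not by $\mathfrak{b}\subseteq I\subseteq\fm$. If you want to bootstrap from Theorem~\ref{thm:mainjohnson} along the lines you describe, you need to replace your ideal $\mathfrak{b}$ with some version of this threshold $l$; otherwise there is no mechanism for the $\fm$ to appear without overshooting to a false containment.
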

While the conjecture of Eisenbud and Mazur has counterexamples in positive
characteristic \cite[Example on p.\ 200]{EM97} and mixed characteristic
\cite[Remark 3.3$(b)$]{KR00}, Theorem \ref{thm:mainjohnsonhhty}
holds for all regular local rings.
In particular, Theorem \ref{thm:mainjohnsonhhty} implies that for ideals $I$
in regular local rings $(R,\fm)$, we have $I^{(h+1)} \subseteq \fm
I$.
See \citeleft\citen{HH07}\citemid p.\ 172\citepunct \citen{TY08}\citemid p.\
719\citeright\ for more discussion.

\subsection{Discussion of proof strategies}
\par We now describe our strategy for proving Theorems \ref{thm:mainelshhms},
\ref{thm:mainjohnson}, and \ref{thm:mainjohnsonhhty}.
We give two logically independent strategies for proving these results,
corresponding to the two parts of this paper.
Both strategies work in all characteristics and are of independent interest.
\begin{enumerate}
  \item[$(\textup{\ref{part:closure-elshhms}})$]
    A closure-theoretic proof using axiomatic closure operations in the sense of
    \cite{Die10,RG18} to replace the theory of tight closure \cite{HH90,HHchar0}
    used in \cite{HH02}.
    In mixed characteristic, this proof uses Heitmann's full extended plus
    ($\epf$) closure \cite{Hei01}, Jiang's weak $\epf$ ($\wepf$) closure
    \cite{Jia21}, and R.G.'s results on closure operations that induce big
    Cohen--Macaulay algebras \cite{RG18}.
  \item[$(\textup{\ref{part:elshhms}})$]
    A proof using various versions of perfectoid/big Cohen--Macaulay test
    ideals from \cite{MS18,MS,PRG,MSTWW,Rob,ST,HLS}
    to replace the theory of multiplier ideals used in \cite{ELS01}.
    Note that Ma and Schwede introduced their theory of perfectoid test
    ideals to prove their special case of Theorem \ref{thm:mainelshhms} in
    \cite{MS18}.
\end{enumerate}
As mentioned above, in mixed characteristic, both proof strategies use Scholze's theory of
perfectoid spaces \cite{Sch12}.\medskip
\par Removing the condition that $I$ is radical is the key obstacle to
proving Theorem \ref{thm:mainelshhms} in the most general setting.
In mixed characteristic, the closure operations and versions of test ideals
available to us require working over a complete local ring.
We therefore need to reduce to the case where we can work over a complete local
ring.
Even if $I$ is radical, it may no longer be radical after this reduction.
Thus, even if one's main interest is the special case of Theorem
\ref{thm:mainelshhms} when $I$ is radical or even prime, one must
consider non-radical ideals.\medskip
\par While our proofs of Theorems \ref{thm:mainelshhms},
\ref{thm:mainjohnson}, and \ref{thm:mainjohnsonhhty} using test ideals predate
our closure-theoretic proofs, we
have presented the closure-theoretic proofs first because the proofs are shorter and
require fewer technical prerequisites.
After reading the preliminaries in
\S\ref{sect:prelims}, the reader can proceed directly to reading
Part \ref{part:closure-elshhms} or Part \ref{part:elshhms} depending on their
interests.
In the rest of this introduction, we summarize some key aspects of each of our
proofs and how they relate to previous work in the literature.
\subsubsection{Proof via closure theory}\label{sect:closureproofdesc}
\par The proof of Theorem \ref{thm:mainelshhms} in equal characteristic
due to Hochster and Huneke \cite{HH02} uses the theory of tight closure in
equal characteristic $p > 0$ \cite{HH90} or equal characteristic zero
\cite{HHchar0}.
This proof is remarkably short, especially in equal characteristic $p > 0$
when $I$ is radical \cite[pp.\ 350--351]{HH02}.\medskip
\par In Part \ref{part:closure-elshhms} of this paper,
we prove Theorems \ref{thm:mainelshhms},
\ref{thm:mainjohnson}, and \ref{thm:mainjohnsonhhty} in mixed characteristic
using an appropriate replacement for tight closure.
In fact, our proofs apply to any context where a sufficiently well-behaved
closure operation exists, and hence gives new proofs of Theorems
\ref{thm:mainelshhms}, \ref{thm:mainjohnson}, and \ref{thm:mainjohnsonhhty}
in all characteristics.
We found these proofs after reconstructing an unpublished proof of Theorem
\ref{thm:mainelshhms} in equal characteristic $p >0$ due to Hochster and Huneke
\cite{HH02}, which uses plus closure \cite{HH92,Smi94}.
Hochster and Huneke sketch their strategy
in \cite[p.\ 353]{HH02}.
According to Hochser and Huneke, their strategy used the following two ingredients:
\begin{enumerate}
  \item Plus closure localizes \citeleft\citen{HH92}\citemid Lemma
    6.5$(b)$\citepunct \citen{Smi94}\citemid p.\ 45\citeright.
  \item A Brian\c{c}on--Skoda-type theorem holds for plus closure
    \cite[Theorem 7.1]{HH95}.
\end{enumerate}
\par Our closure-theoretic proofs of Theorems \ref{thm:mainelshhms},
\ref{thm:mainjohnson}, and \ref{thm:mainjohnsonhhty} in mixed characteristic
are possible
because sufficiently powerful replacements for tight closure and plus closure in
mixed characteristic are now available.
Heitmann introduced full extended plus ($\epf$)
closure \cite{Hei01} as one such possible replacement for tight closure and plus
closure in mixed characteristic.
Following \cite{Hei01,HM21},
for a domain $R$ such that the image of $p$ lies in the Jacobson
radical of $R$, the \textsl{full extended plus} ($\epf$) \textsl{closure} of an
ideal $I \subseteq R$ is
\[
  I^\epf \coloneqq \Set*{x \in R \given \begin{tabular}{@{}c@{}}
    there exists $c \in R - \{0\}$ such that\\
    $c^\varepsilon x \in (I,p^N)R^+$\\
    for every $\varepsilon \in \QQ_{>0}$ and every $N \in \ZZ_{>0}$
  \end{tabular}}.
\]
Here, $R^+$ denotes the \textsl{absolute integral closure} of $R$, i.e., the
integral closure of a domain $R$ in an algebraic closure of
its fraction field (see \S\ref{sect:rplus}).
We note that $\epf$ closure coincides with tight closure for complete Noetherian
local rings of equal characteristic
$p > 0$ by \cite[Theorem 3.1]{HH91} (see \cite[Proposition 2.11]{HM21}).
\par Heitmann proved a Brian\c{c}on--Skoda-type theorem for
$\epf$ closure \cite[Theorem 4.2]{Hei01}, and
used $\epf$ closure to prove Hochster's direct summand
conjecture \cite[p.\ 25]{Hoc73} in mixed characteristic for rings of dimension three
\cite{Hei02}.
More recently, using perfectoid techniques,
Heitmann and Ma showed that Heitmann's Brian\c{c}on--Skoda-type theorem
for $\epf$ closure implies the Brian\c{c}on--Skoda theorem in mixed
characteristic \cite[Theorem 3.20]{HM21} and gave a closure-theoretic proof of
Hochster's direct summand conjecture in mixed characteristic
\cite[p.\ 135]{HM21}.
The Brian\c{c}on--Skoda theorem was originally proved by Brian\c{c}on and Skoda
\cite{SB74} for $\mathbf{C}\{z_1,z_2,\ldots,z_n\}$ and by Lipman and Sathaye
\cite{LS81} in general.
The direct summand conjecture was originally proved by Hochster \cite{Hoc73}
in equal characteristic and by Heitmann \cite{Hei02}
(in dimension three) and Andr\'e \cite{And18a,And18b} (in general) in mixed
characteristic.\medskip
\par While $\epf$ closure is powerful enough to prove the direct summand
conjecture and the Brian\c{c}on--Skoda theorem in mixed characteristic, our
proofs of Theorems \ref{thm:mainelshhms}, \ref{thm:mainjohnson}, and
\ref{thm:mainjohnsonhhty} require working with other closure operations.
One reason for this is that $\epf$ closure does not localize
\cite[p.\ 817]{Hei01}, and hence Hochster and Huneke's unpublished proof of
Theorem \ref{thm:mainelshhms} using plus closure in equal characteristic $p >0$
\cite[p.\ 353]{HH02} cannot readily be adapted to the mixed characteristic case.
\par Instead, our strategy in Part \ref{part:closure-elshhms} utilizes
$\epf$ closure in conjunction with Jiang's weak $\epf$
($\wepf$) closure \cite{Jia21} and R.G.'s results on closure operations that
induce big Cohen--Macaulay algebras \cite{RG18}.
We use $\wepf$ closure and R.G.'s results to work around the fact that $\epf$
closure does not localize.
Following \cite{Jia21},
for a domain $R$ such that the image of $p$ lies in the Jacobson
radical of $R$, the \textsl{weak
$\epf$} ($\wepf$) \textsl{closure} of an ideal $I \subseteq R$ is
\[
  I^\wepf \coloneqq \bigcap_{N=1}^\infty (I,p^N)^\epf.
\]
\par The advantage of using $\wepf$ closure is that it satisfies key
additional properties not known for $\epf$ closure,
namely Dietz's axioms \cite[Axioms
1.1]{Die10} and R.G.'s algebra axiom \cite[Axiom 3.1]{RG18}.
Dietz formulated the axioms in \cite{Die10} to characterize when a closure
operation can be used to construct big Cohen--Macaulay modules.
R.G. formulated the algebra axiom in \cite{RG18} to characterize when a closure
operation can be used to construct big Cohen--Macaulay algebras.
Using perfectoid techniques,
Jiang \cite[Theorem
4.8]{Jia21} showed that $\wepf$ closure
satisfies Dietz's axioms and R.G.'s algebra axiom when
$R$ is a complete local domain of mixed characteristic with $F$-finite residue
field.
As a consequence,
we have the inclusions
\begin{equation}\label{eq:epfwepfbcm}
  I^\epf \subseteq I^\wepf \subseteq IB \cap R
\end{equation}
for some big Cohen--Macaulay algebra $B$ over $R$ by a result of R.G.
\cite[Proposition 4.1]{RG18}.
\medskip
\par To illustrate our proof strategy for Theorems \ref{thm:mainelshhms},
\ref{thm:mainjohnson}, and \ref{thm:mainjohnsonhhty}, we sketch
the proof of Theorem \ref{thm:mainelshhms} when $R$ is a complete regular 
local ring of
mixed characteristic with an $F$-finite residue field and $I$ is generated by
$h$ elements.
If $u \in I^{(hn)}$, there exists an element $c$ avoiding the associated primes
of $R/I$ such that $cu \in I^{hn}$.
We then consider a module-finite extension $R'$ of $R$ that is complete local
and contains an $n$-th root $u^{1/n}$ of $u$.
We have
\begin{align*}
  (cu^{1/n})^n &= c^nu \in (I^hR')^n,
\intertext{and hence $cu^{1/n} \in \overline{I^hR'}$.
By Heitmann's Brian\c{c}on--Skoda-type theorem for $\epf$ closure
\cite[Theorem 4.2]{Hei01} and using \eqref{eq:epfwepfbcm}, we know that}
  cu^{1/n} \in (IR')^\epf &\subseteq (IR')^\wepf \subseteq IB \cap R',
\end{align*}
where $B$ is a big Cohen--Macaulay algebra over $R'$.
Then, $B$ is also a big Cohen--Macaulay algebra over $R$ and
$R \to B$ is faithfully flat.
We therefore see that $c$
is a nonzerodivisor on $B/IB$, and hence $u^{1/n} \in IB \cap R'$.
Taking $n$-th powers, we have $u \in I^nB \cap R = I^n$.\medskip
\par We highlight three key aspects of our proof of Theorems \ref{thm:mainelshhms},
\ref{thm:mainjohnson}, and \ref{thm:mainjohnsonhhty} in Part \ref{part:closure-elshhms}
that already appear in the proof sketch above.
\begin{enumerate}[ref=\arabic*]
  \item\label{keyaspect:i1}
    Instead of passing to a large, possibly non-Noetherian extension of $R$
    (such as $R^+$ or a big Cohen--Macaulay algebra), we pass to the
    \emph{smallest} integral extension $R'$ of $R$ where $u^{1/n}$ makes sense.
    Since $R'$ is module-finite over $R$, it is Noetherian, and hence we can use
    closure operations on $R'$ as usual.
  \item\label{keyaspect:i2}
    After applying the Brian\c{c}on--Skoda theorem for $\epf$ closure on $R'$,
    we can apply R.G.'s result on $R'$ as in \eqref{eq:epfwepfbcm} to pass to a
    big Cohen--Macaulay algebra $B$ that captures $\wepf$ closure.
    We pass to $IB \cap R'$ instead of working with
    $(IR')^\epf$ or $(IR')^\wepf$ because
there are no elements $c^\varepsilon$ or $p$-powers involved in the definition.
  \item\label{keyaspect:i3}
    Since $B$ is flat over $R$, we can detect when nonzerodivisors in $R/I$ map
    to nonzerodivisors in $B/IB$.
    Thus, we can
    get rid of the element $c$ multiplying $u^{1/n}$ into $IB$.
\end{enumerate}
Note that $(\ref{keyaspect:i2})$ and $(\ref{keyaspect:i3})$ require the
extension ring $R'$ in $(\ref{keyaspect:i1})$ to be Noetherian.
Working with the big Cohen--Macaulay algebra $B$ in
$(\ref{keyaspect:i2})$ and $(\ref{keyaspect:i3})$ allows us
to avoid localizations.\medskip
\par Our proof strategy using closure operations also applies to any
closure operation satisfying Dietz's axioms, R.G.'s algebra axiom, and a
Brian\c{c}on--Skoda-type theorem.
We therefore obtain new proofs of Theorems \ref{thm:mainelshhms},
\ref{thm:mainjohnson}, and \ref{thm:mainjohnsonhhty} in all characteristics, since
tight closure \cite{HH90} and plus closure \cite{HH92,Smi94} satisfy these
properties in equal characteristic $p > 0$, and
$\mathfrak{B}$-closure \cite{AS07} satisfies these properties in equal
characteristic zero.
In mixed characteristic, one can alternatively use Heitmann's full rank $1$
($\ronef$) closure \cite{Hei01} instead of $\wepf$ closure
(still using results from \cite{RG18,Jia21}).
See Table \ref{tab:dietzclosures} for references to proofs of these axioms for
these closure operations.
We can also adapt our proofs to use small or big equational tight clossure in
equal characteristic zero \cite{HHchar0}.
See Remark \ref{rem:weakalgaxiom}.

\subsubsection{Proof via multiplier/test ideals}\label{sect:intropartii}
The proof of Theorem \ref{thm:mainelshhms} for smooth complex varieties due to
Ein, Lazarsfeld, and Smith \cite{ELS01} uses the theory of \textsl{multiplier
ideals} $\cJ(X,\fa^t)$ (see \cite[Part Three]{Laz04b}).
Here, $X$ denotes a smooth complex variety, $\fa \subseteq \cO_X$ is a coherent
ideal sheaf, and $t$ is a non-negative real number.
One version of their proof (see \citeleft\citen{ST12}\citemid Theorem
6.23\citepunct \citen{CRS20}\citemid p.\ 27\citeright) proceeds by showing
the following sequence of inclusions and equalities:
\begin{equation}\label{eq:crssketch}
  I^{(hn)}
  \underset{(\text{\scriptsize$\ref{multiplierideal:nottoosmall}$})}{\subseteq}
  \cJ\Bigl(X,\bigl(I^{(hn)}\bigr)^1\Bigr)
  \underset{(\text{\scriptsize$\ref{multiplierideal:unambiguity}$})}{=}
  \cJ\Bigl(X,\bigl(I^{(hn)}\bigr)^{n/n}\Bigr)
  \underset{(\text{\scriptsize$\ref{multiplierideal:subadditivity}$})}{\subseteq}
  \cJ\Bigl(X,\bigl(I^{(hn)}\bigr)^{1/n}\Bigr)^n
  \underset{(\text{\scriptsize$\ref{multiplierideal:nottoobig}$})}{\subseteq}
  I^n.
\end{equation}
These inclusions and equalities
follow from the following formal properties of multiplier
ideals (using terminology from \cite[p.\ 915]{MS18}):
\begin{enumerate}[ref=\arabic*]
  \item\label{multiplierideal:nottoosmall}
    (Not too small) $\fa \subseteq \cJ(X,\fa^1)$.
  \item\label{multiplierideal:unambiguity}
    (Unambiguity of exponent) For every integer $n > 0$, we have
    $\cJ(X,\fa^{tn}) = \cJ(X,(\fa^n)^t)$.
  \item\label{multiplierideal:subadditivity}
    (Subadditivity \cite[Variant 2.5]{DEL00}) $\cJ(X,\fa^{tn}) \subseteq
    \cJ(X,\fa^t)^n$.
  \item\label{multiplierideal:nottoobig}
    (Not too big \citeleft\citen{ELS01}\citemid Proof of Variant on p.\
    251\citepunct \citen{TY08}\citemid Proof of Theorem 4.1\citeright)
    If $h$ is the largest analytic spread of $\fa R_\fp$ where
    $\fp$ ranges over all associated primes of $R/\fa$, then
    \[
      \cJ\Bigl(X,\bigl(\fa^{(hn)}\bigr)^{1/n}\Bigr) \subseteq \fa.
    \]
\end{enumerate}
Hara \cite[Theorem 2.12]{Har05} adapted this approach to give an alternative
proof of Theorem \ref{thm:mainelshhms} in equal characteristic $p > 0$ using
the theory of \textsl{test ideals} $\tau(R,\fa^t)$ introduced in
\citeleft\citen{HH90}\citemid \S8\citepunct \citen{HY03}\citeright.\medskip
\par In mixed characteristic, Ma and Schwede \cite{MS18} used perfectoid techniques
to define and develop the theory for a mixed characteristic analogue of
multiplier/test ideals, the
\textsl{perfectoid test ideals}
\[
  \tau\bigl(R,[\underline{f}]^t\bigr) \qquad \text{and} \qquad
  \tau\bigl(R,\fa^t\bigr).
\]
Here, $R$ is a complete regular local ring of mixed characteristic,
$[\underline{f}]$ represents a choice of elements $f_1,f_2,\ldots,f_r \in R$,
and $\fa$ is the ideal $(f_1,f_2,\ldots,f_r)$.
These two ideals are related in the following manner:
\[
  \tau\bigl(R,[\underline{f}]^t\bigr) \subseteq \tau\bigl(R,\fa^t\bigr).
\]
\par To prove the special case of Theorem \ref{thm:mainelshhms} when $R$ is a
regular ring of
mixed characteristic with geometrically reduced formal fibers
and $I$ is radical,
Ma and
Schwede first reduce to the complete local case (the assumption on the formal
fibers of $R$ is needed to preserve the radicalness of $I$).
Ma and Schwede then prove and apply analogues of properties
$(\ref{multiplierideal:nottoosmall})$--$(\ref{multiplierideal:subadditivity})$
for the ideal $\tau(R,[\underline{f}]^t)$ \cite[Proposition 3.8, Proposition
3.9, and Theorem 4.4]{MS18}.
The assumption that $I$ is radical is used to prove the special case of
$(\ref{multiplierideal:nottoobig})$ for the ideal $\tau(R,\fa^t)$ when $\fa$ is
radical
\cite[Theorem 5.11]{MS18}.\medskip
\par As mentioned above, removing the condition that $I$ is radical is the key
obstacle to proving Theorem \ref{thm:mainelshhms} in the most general setting.
To apply \cite[Theorem 5.11]{MS18}, it is necessary in \cite{MS18}
to reduce to the case when
$I$ is a radical ideal in a complete regular local ring $R$.
Even if one starts with a radical ideal $I$, after reducing to the complete local
case, it may no longer be radical.
In addition, our strategy for Theorems \ref{thm:mainjohnson} and
\ref{thm:mainjohnsonhhty} in Part \ref{part:elshhms} requires
stronger forms of Ma and Schwede's unambiguity
statement for exponents $(\ref{multiplierideal:unambiguity})$
\cite[Proposition 3.8]{MS18} and subadditivity
$(\ref{multiplierideal:subadditivity})$ \cite[Theorem 4.4]{MS18}.\medskip
\par In Part \ref{part:elshhms} of this paper, we instead use the test
ideals
\[
  \tau_B\bigl(R,\Delta,[\underline{f}]^t\bigr) \qquad \text{and} \qquad
  \tau_B\bigl(R,\Delta,\fa^t\bigr)
\]
which were defined by Robinson \cite{Rob}
for normal complete Noetherian local rings
of arbitrary characteristic such that $K_R+\Delta$ is $\QQ$-Cartier
(see Definition \ref{def:ms18like}).
As with Ma and Schwede's definition, $[\underline{f}]$ represents a choice of
generators for the ideal $\fa$, and the two ideals are related in the following
manner:
\[
  \tau_B\bigl(R,\Delta,[\underline{f}]^t\bigr) \subseteq
  \tau_B\bigl(R,\Delta,\fa^t\bigr).
\]
\par Robinson's definition
combines features of the perfectoid test ideals of \cite{MS18}
and of the \textsl{big Cohen--Macaulay test ideals} $\tau_B(R,\Delta)$.
Ma and Schwede \cite{MS} and P\'erez and R.G. \cite{PRG} introduced big
Cohen--Macaulay test
ideals $\tau_B(R,\Delta)$ of pairs $(R,\Delta)$.
Robinson \cite{Rob} extended their definition
to triples of the form $(R,\Delta,[\underline{f}]^t)$ and
$(R,\Delta,\fa^t)$ (see also \cite{ST}).
These definitions rely on the existence of $R^+$-algebras $B$ that are big
Cohen--Macaulay over $R$, where $R^+$ is the absolute integral closure
of $R$.
Big Cohen--Macaulay algebras were introduced by Hochster
\cite{Hoc75queens} and were shown to exist in \cite{HH92,And18b} (see also
\cite{Hoc94,HH95,Shi18,DRG19,And20} and Remark \ref{rem:bigcmalgexist}).
\par Compared to the perfectoid test ideals of \cite{MS18}, Robinson's
definition works
in arbitrary characteristic.
It also does not use almost mathematics, and there is no perturbation present in
the definition
in contrast to \cite[Definition 3.5]{MS18}.
These differences in the definition allow us in this paper to
prove versions of the unambiguity statement for
exponents $(\ref{multiplierideal:unambiguity})$
(Proposition \ref{prop:notambig}) and subadditivity
$(\ref{multiplierideal:subadditivity})$
(Theorem \ref{thm:subadditivity}) that are
stronger than those proved in \cite{MS18} for
perfectoid test ideals.
These results are essential for our proofs of Theorems
\ref{thm:mainjohnson} and \ref{thm:mainjohnsonhhty}.\medskip
\par We are able to prove Theorems \ref{thm:mainelshhms}, \ref{thm:mainjohnson},
and \ref{thm:mainjohnsonhhty} for arbitrary (not necessarily radical) ideals in
arbitrary (not necessarily excellent) regular rings
by proving the following version of
$(\ref{multiplierideal:nottoobig})$
that holds for arbitrary (not necessarily radical) ideals $I$.
\begin{alphthm}\label{thm:ty23like}
  Let $R$ be a normal complete Noetherian local ring such that $K_R$ is
  $\QQ$-Cartier.
  Consider an ideal $I \subseteq R$, and assume that 
  the localizations of $R$ at the associated primes of $R/I$ have
  infinite residue fields.
  \par Let $h$ be the largest analytic spread of $IR_\fp$, where $\fp$ ranges
  over all associated primes of $R/I$.
  For every integer $M > 0$
  and every finite set of non-negative integers $s_1,s_2,\ldots,s_n$, 
  there exists an $R^+$-algebra $B$ that is big Cohen--Macaulay over $R$
  such that
  \begin{equation}\label{eq:lastinclusionwithtaub}
    \tau_B\Bigl(R,\bigl(I^{(M)}\bigr)^{\frac{s_i+h}{M}}\Bigr)
    \subseteq I^{(s_i+1)}
  \end{equation}
  for every $i$.
  If $R$ is of residue characteristic $p > 0$, then setting $B = \widehat{R^+}$
  suffices.
\end{alphthm}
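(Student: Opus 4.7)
The plan is to verify the containment \eqref{eq:lastinclusionwithtaub} one associated prime of $R/I$ at a time, transporting the test-ideal statement to each localization $R_\fp$ and invoking a Skoda-type theorem there. Since
\[
  I^{(s_i+1)} = \bigcap_{\fp \in \Ass_R(R/I)} I^{s_i+1} R_\fp \cap R,
\]
it suffices to show, for each $\fp \in \Ass_R(R/I)$, that the image of $\tau_B\bigl(R, (I^{(M)})^{(s_i+h)/M}\bigr)$ inside $R_\fp$ lies in $(IR_\fp)^{s_i+1}$.

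The first step is to move the statement into $R_\fp$. Using the compatibility of $\tau_B$ with open embeddings, property \textup{(\ref{hlsideal:localization})}, together with the identity $I^{(M)} R_\fp = (IR_\fp)^M$ valid at an associated prime, and the unambiguity of exponents proved as Proposition \ref{prop:notambig} (the analogue of property \textup{(\ref{multiplierideal:unambiguity})}), one reduces to
\[
  \tau_{B'}\bigl(R_\fp, (IR_\fp)^{s_i+h}\bigr) \subseteq (IR_\fp)^{s_i+1}
\]
for an appropriate BCM algebra $B'$ over $R_\fp$. Next, since $R_\fp$ has infinite residue field and $IR_\fp$ has analytic spread at most $h$, a standard Northcott--Rees argument furnishes a minimal reduction $J \subseteq IR_\fp$ generated by $h$ elements. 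Because test ideals depend on their ideal argument only through its integral closure and are therefore insensitive to passage to a reduction, one may replace $IR_\fp$ by $J$. The Skoda theorem for $\tau_B$, property \textup{(\ref{hlsideal:skoda})}, applied to the $h$-generated ideal $J$ then yields
\[
  \tau_{B'}\bigl(R_\fp, J^{s_i+h}\bigr) = J^{s_i+1} \cdot \tau_{B'}\bigl(R_\fp, J^{h-1}\bigr) \subseteq J^{s_i+1} \subseteq (IR_\fp)^{s_i+1},
\]
which is the required local containment.

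The main obstacle is not any individual step but the need to produce a single BCM algebra $B$ that simultaneously controls the test ideals in \eqref{eq:lastinclusionwithtaub} for every index $i \in \{1,\ldots,n\}$ and every associated prime $\fp$. In residue characteristic $p > 0$, the canonical choice $B = \widehat{R^+}$ sidesteps this difficulty, since the $+$-test ideals of Hacon--Lamarche--Schwede are the only test ideals in mixed characteristic currently known to satisfy both the localization and Skoda properties, and $\widehat{R^+}$ computes them uniformly for all $i$ and all $\fp$ at once. In residue characteristic zero one must instead splice together finitely many BCM algebras --- one tailored to each pair $(\fp,i)$ --- into a single $R^+$-algebra via faithfully flat extensions, and verify that each relevant test ideal is still correctly computed by this common $B$. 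Coordinating the choice of $B$ while preserving localization and Skoda throughout is the delicate bookkeeping at the heart of the argument.
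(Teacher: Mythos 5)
Your overall plan — localize, swap $I$ for a reduction generated by $h$ elements, invoke Skoda — is the right skeleton, and it matches the paper's high-level strategy. But there is a genuine gap in the middle of your argument that the paper goes to considerable lengths to avoid.

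You write that one uses ``compatibility of $\tau_B$ with open embeddings, property $(\ref{hlsideal:localization})$'' to move the statement into $R_\fp$, and then work with $\tau_{B'}(R_\fp, (IR_\fp)^{s_i+h})$ for a big Cohen--Macaulay algebra $B'$ over $R_\fp$. This does not go through: $\Spec(R_\fp) \hookrightarrow \Spec(R)$ is not an open immersion for a general prime $\fp$, and as the paper stresses in \S\ref{sect:intropartii}, no version of perfectoid/big Cohen--Macaulay test ideals is currently known to be compatible with arbitrary localizations in mixed characteristic. Moreover $R_\fp$ is not complete local, so the $\tau_{B'}$ formalism of Definition \ref{def:ms18like} is not available for it, and $\Spec(R_\fp)$ is not quasi-projective over $\Spec(R)$, so Definition \ref{def:hlsqproj} does not apply either. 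In short, you invoke $(\ref{hlsideal:localization})$ — which is genuinely only an open-embedding compatibility — to justify a localization that is not an open embedding.

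The paper's proof circumvents this exactly by replacing the localization $R_\fp$ with a suitable \emph{principal} localization $R_{x_\ell}$ (which does give an open immersion $\Spec(R_{x_\ell}) \hookrightarrow \Spec(R)$). For each associated prime $\fp_\ell$, using Noetherianity one picks $x_\ell \notin \fp_\ell$ so that $I^{(M)} R_{x_\ell} = I^M R_{x_\ell}$ and $\overline{I}R_{x_\ell} = \overline{J_\ell}R_{x_\ell}$, where $J_\ell \subseteq I$ is an $h$-generated ideal whose image in $R_{\fp_\ell}$ is a reduction of $IR_{\fp_\ell}$. Then one expands $\tau_{\widehat{R^+}}(R, (I^{(M)})^{(s_i+h)/M})$ as a sum of test ideals of divisor pairs via Lemma \ref{lem:testidealasasum}$(\ref{lem:hlscomparison})$ — since $(\ref{hlsideal:localization})$ is only stated for divisor pairs — restricts those to $\Spec(R_{x_\ell})$ by \cite[Corollary 5.8]{HLS}, passes back up to a $+$-test ideal of the ideal pair $(X, \overline{J_\ell}^{s_i+h})$ via Lemma \ref{lem:hls64}, and only then applies the Skoda-type theorem \cite[Theorem 6.6]{HLS}. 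Without this detour through principal localizations and the sum decomposition of test ideals, your Skoda step has nothing to stand on. Finally, your sketch of the equal characteristic zero case (splicing together BCM algebras per pair $(\fp,i)$) does not match the paper and is too vague to assess; the paper instead invokes the comparison with multiplier ideals (Theorem \ref{thm:compmultideals}), the existence of weakly functorial BCM $R^+$-algebras in equal characteristic zero (Theorem \ref{thm:weaklyfunctorialequalchar0}), and the Skoda theorem for multiplier ideals on excellent $\QQ$-schemes.
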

Here, $\widehat{R^+}$ denotes the $p$-adic completion of the absolute integral
closure of $R$, which is a
big Cohen--Macaulay algebra for complete Noetherian local
domains of residue characteristic $p > 0$.
This is due to Hochster and Huneke \cite[Main Theorem 5.15]{HH92} in equal
characteristic $p > 0$ and to Bhatt \cite[Corollary 5.17]{Bha} in mixed
characteristic.
\par Theorem \ref{thm:ty23like} can be viewed as a version of the equal
characteristic statements in \cite[Proof of Variant on p.\ 251]{ELS01},
\cite[Proof of Theorem 2.12]{Har05}, and \cite[Proposition
2.3 and Proof of Theorem 4.1]{TY08} that applies in all
characteristics.
Using Robinson's test ideals and Theorem
\ref{thm:ty23like}, we then prove Theorems \ref{thm:mainelshhms},
\ref{thm:mainjohnson}, and \ref{thm:mainjohnsonhhty} for arbitrary ideals in
arbitrary regular rings of all characteristics simultaneously.
The idea is to use our results
for the test ideals $\tau_B(R,\Delta,[\underline{f}]^t)$ and
$\tau_B(R,\Delta,\fa^t)$ mentioned
above to construct a sequence of inclusions as in \eqref{eq:crssketch},
instead of those for multiplier ideals or for existing versions of test
ideals used in \cite{ELS01,Har05,TY08,ST12,MS18} to prove previously known cases
of Theorem \ref{thm:mainelshhms}.\medskip
\par A key difficulty in proving Theorem \ref{thm:ty23like} is that no single
version of perfectoid/big
Cohen--Macaulay test ideals satisfies all the properties necessary to prove
Theorem \ref{thm:ty23like}.
To prove the analogous result $(\ref{multiplierideal:nottoobig})$ for rings
essentially of finite type over a field of characteristic zero,
Takagi and Yoshida \cite[Proof of Theorem 4.1]{TY08}
proceed by localizing at each associated
prime $\fp$ of $R/I$ and show the following sequence of inclusions and
equalities (see also \cite[Proof of Variant on p.\ 251]{ELS01} and Proposition
\ref{prop:ty23likechar0}):
\begin{equation}\label{eq:tysketch}
  \begin{alignedat}{3}
    \cJ\Bigl(X,\bigl(I^{(hn)}\bigr)^{1/n}\Bigr) \cdot R_\fp
    &\omit\hfil$\mathrel{\underset{(\text{\scriptsize$\ref{multiplierideal:localization}$})}{=}}$\hfil\ignorespaces&&
    \cJ\Bigl(\Spec(R_\fp),\bigl(I^{hn}R_\fp\bigr)^{1/n}\Bigr)\\
    &\omit\hfil$\mathrel{\underset{(\text{\scriptsize$\ref{multiplierideal:unambiguity}$})}{=}}$\hfil\ignorespaces&&
    \cJ\bigl(\Spec(R_\fp),(IR_\fp)^h\bigr)\\
    &\omit\hfil$\mathrel{\underset{(\text{\scriptsize$\ref{multiplierideal:skoda}$})}{=}}$\hfil\ignorespaces&&
    I \cdot \cJ\bigl(\Spec(R_\fp),(IR_\fp)^{h-1}\bigr)\\
    &\omit\hfil$\mathrel{\subseteq}$\hfil\ignorespaces&&
    IR_\fp.
  \end{alignedat}
\end{equation}
These inclusions and equalities follow from the following formal properties of
multiplier ideals which hold whenever $X$ is a normal scheme
essentially of finite type over the complex numbers such that $K_X$ is $\QQ$-Cartier:
\begin{enumerate}[label=$(\ref*{multiplierideal:nottoobig}\alph*)$,
    ref=\ref*{multiplierideal:nottoobig}\alph*]
  \item\label{multiplierideal:localization} (Localization) The formation of
    $\cJ(X,\fa^t)$ is compatible with arbitrary localization.
  \item\label{multiplierideal:skoda} (Skoda-type theorem \cite[Theorem
    9.6.36]{Laz04b}) Suppose that $X = \Spec(R)$ for a local ring $R$ and that
    $\fa$ has analytic spread $h$.
    For all $t \ge h$, we have
    $\cJ(X,\fa^t) = \fa \cdot \cJ(X,\fa^{t-1})$.
\end{enumerate}
Hara \cite[Proof of Theorem 2.12]{Har05} and Takagi--Yoshida \cite[Propostion
2.3]{TY08} used this approach for test ideals in equal characteristic
$p > 0$.\medskip
\par In mixed characteristic, no version of perfectoid/big
Cohen--Macaulay test ideals is currently known to be compatible with arbitrary
localizations.
However, the \textsl{$+$-test ideals} of Hacon, Lamarche, and Schwede \cite{HLS}
satisfy a weaker version of $(\ref{multiplierideal:localization})$ and also
satisfy $(\ref{multiplierideal:skoda})$.
The definition of $+$-test ideals relies on the result that the $p$-adic
completion $\widehat{R^+}$ of the absolute integral closure of $R$ is a
big Cohen--Macaulay algebra for complete Noetherian local
domains of residue characteristic $p > 0$ mentioned above
\citeleft\citen{HH92}\citemid Main Theorem 5.15\citepunct \citen{Bha}\citemid
Corollary 5.17\citeright.
Building
on these results
and subsequent developments
due to
Takamatsu and Yoshikawa \cite{TY} and to
Bhatt, Ma, Patakfalvi, Schwede, Tucker, Waldron, and Witaszek \cite{BMPSTWW},
Hacon, Lamarche, and Schwede \cite{HLS} introduced
the $+$-test ideals
\[
  \tau_+\bigl(\cO_X,\Delta\bigr) \qquad \text{and} \qquad
  \tau_+\bigl(\cO_X,\fa^t\bigr)
\]
for divisor pairs $(X,\Delta)$ and ideal pairs $(X,\fa^t)$, respectively,
where $X$ is a normal integral quasi-projective
scheme over a complete Noetherian local ring $(R,\fm)$ of residue
characteristic $p > 0$ such that $K_X+\Delta$ is $\QQ$-Cartier.
\par These $+$-test ideals
satisfy the following versions of $(\ref{multiplierideal:localization})$ and
$(\ref{multiplierideal:skoda})$, where $X$ and $R$ are as in the previous
paragraph:
\begin{enumerate}
  \item[{$(\ref{multiplierideal:localization}')$}]
  \makeatletter
  \protected@edef\@currentlabel{\ref*{multiplierideal:localization}'}
  \phantomsection
  \label{hlsideal:localization}
  \makeatother
    (Compatibility with open embeddings \cite[Corollary 5.8]{HLS})
    If $U \hookrightarrow X$ is an open
    embedding, then $\tau_+(\cO_X,\Delta)\rvert_U = \tau_+(\cO_U,\Delta\rvert_U)$.
  \item[{$(\ref{multiplierideal:skoda}')$}]
  \makeatletter
  \protected@edef\@currentlabel{\ref*{multiplierideal:skoda}'}
  \phantomsection
  \label{hlsideal:skoda}
  \makeatother
    (Skoda-type theorem \cite[Theorem 6.6]{HLS})
    Suppose that $X = \Spec(R)$ and that $\fa$ has analytic spread $h$.
    For all $t \ge h$, we have
    $\tau_+(\cO_X,\fa^t) = \fa \cdot \tau_+(\cO_X,\fa^{t-1})$.
\end{enumerate}\medskip
\par We highlight six key aspects of our proof of Theorems \ref{thm:mainelshhms},
\ref{thm:mainjohnson}, and \ref{thm:mainjohnsonhhty} in
Part \ref{part:elshhms}.
\begin{enumerate}
  \item Because Robinson's big Cohen--Macaulay test ideals
    $\tau_B(R,\Delta,[\underline{f}]^t)$ do not use almost mathematics and do
    not incorporate perturbations, we are able to prove stronger versions of the
    unambiguity statement for exponents $(\ref{multiplierideal:unambiguity})$
    (Proposition \ref{prop:notambig}) and subadditivity
    $(\ref{multiplierideal:subadditivity})$
    (Theorem \ref{thm:subadditivity}) compared to \cite{MS18}.
    This extra flexibility is essential in our proofs of Theorems
    \ref{thm:mainjohnson} and \ref{thm:mainjohnsonhhty}.
    Our subadditivity theorem (Theorem \ref{thm:subadditivity}) requires working
    with a fixed set of generators $f_1,f_2,\ldots,f_r$ as was also the case in
    \cite{MS18}.
\end{enumerate}
To prove Theorem \ref{thm:ty23like} in residue characteristic $p > 0$, we
proceed as follows.
\begin{enumerate}[resume]
  \item To make properties $(\ref{hlsideal:localization})$ and
    $(\ref{hlsideal:skoda})$ satisfied by Hacon--Lamarche--Schwede's $+$-test
    ideals available to us, we prove the following comparisons between the
    various versions of test ideals in \cite{MS,PRG,Rob,ST,HLS}.
For normal complete Noetherian local rings $(R,\fm)$ of residue
characteristic $p > 0$ whose canonical
divisors $K_R$ are $\QQ$-Cartier, and for all ideals $\fa \subseteq R$ generated
by elements $f_1,f_2,\ldots,f_r$, the special instances of Robinson's test
ideals
when $B = \widehat{R^+}$ and $\Delta = 0$
are related to the definitions in \cite{MS,PRG,ST,HLS}
in the following manner for rational numbers $t \ge 0$:
\begin{equation}\label{eq:keycomparisons}
  \begin{aligned}
    \tau_{\widehat{R^+}}
    \bigl(R,[\underline{f}]^t\bigr) \subseteq \tau_{\widehat{R^+}}
    \bigl(R,\fa^t\bigr)
    &= \sum_{m=1}^\infty \sum_{g \in \fa^m}
    \tau_{\widehat{R^+}}\biggl(R,\frac{t}{m} \prdiv_R(g)\biggr)\\
    &= \sum_{m=1}^\infty \sum_{g \in \fa^m}
    \tau_+\biggl(\cO_{\Spec(R)},\frac{t}{m} \prdiv_R(g)\biggr)\\
    &\subseteq \tau_+\bigl(\cO_{\Spec(R)},\fa^t\bigr).
  \end{aligned}
\end{equation}
See Proposition \ref{prop:ms18comparison},
Lemma \ref{lem:testidealasasum}, Remark \ref{rem:hls512}, and Lemma
\ref{lem:hls64}.
  \item Instead of localizing at prime ideals as in \eqref{eq:tysketch}, it is
    enough to find a suitable \emph{principal} localization where we can show
    that the sequence of inclusions and equalities in \eqref{eq:tysketch} hold.
    Since we need to apply $(\ref{hlsideal:localization})$, which holds for
    $+$-test ideals of
    divisor pairs $(X,\Delta)$, we use the penultimate ideal in
    \eqref{eq:keycomparisons} together with properties of reductions of ideals
    (see \cite[Chapter 8]{SH06}).
  \item We use the last inclusion in \eqref{eq:keycomparisons} to pass
    to a $+$-test ideal of an ideal pair $(X,\fa^t)$ and apply the Skoda-type
    theorem $(\ref{hlsideal:skoda})$.
    This step requires using the fact that $+$-test ideals are compatible with
    integral closures of ideals \cite[Remark 6.4]{HLS}.
\end{enumerate}
Finally, to prove Theorem \ref{thm:ty23like} in equal characteristic zero, we
proceed as follows.
\begin{enumerate}[resume]
  \item We prove a comparison between Robinson's big Cohen--Macaulay test ideal
    and multiplier ideals (Theorem \ref{thm:compmultideals}) that strengthens
    \citeleft\citen{MS18}\citemid Theorem 6.3\citepunct
    \citen{MS}\citemid Proposition 3.7 and Theorem 6.21\citepunct
    \citen{MSTWW}\citemid Theorem 5.1\citepunct
    \citen{Rob}\citemid Theorem 3.9\citeright.
    This requires proving that weakly functorial big Cohen--Macaulay
    $R^+$-algebras exist in equal characteristic zero
    (\ref{thm:weaklyfunctorialequalchar0}).
    This existence result uses ultraproducts, Lefschetz hulls, and the theory of
    seed algebras in equal characteristic zero
    \cite{AS07,Sch10,DRG19} and is of independent interest.
  \item We then apply the theory of multiplier ideals
    on excellent schemes of equal characteristic zero, which was developed in
    \cite{dFM09,JM12,ST}.
    An essential new ingredient is
    our recent generalization of the Kawamata--Viehweg vanishing
    theorem for proper morphisms of schemes of equal characteristic zero
    \cite[Theorem A]{Mur}, which is used to verify the Skoda-type theorem
    $(\ref{multiplierideal:skoda})$ when $X$ is no longer essentially of finite
    type over a field.
\end{enumerate}
As a result, our proof strategy using multiplier/test ideals applies to all
characteristics.
We also obtain new proofs of Theorems \ref{thm:mainjohnson} and
\ref{thm:mainjohnsonhhty} in equal characteristic that use multiplier/test
ideals.
As far as we are aware, the only proofs of Theorems \ref{thm:mainjohnson} and
\ref{thm:mainjohnsonhhty} in full generality in equal characteristic that exist
in the literature \cite{Joh14} use tight closure (see \cite[Theorems 3.1 and
4.1]{TY08} for a proof of a special case of Theorem \ref{thm:mainjohnsonhhty}
using multiplier/test ideals).\medskip
\par Lastly, 
in equal characteristic zero,
we also give an independent proof of Theorems
\ref{thm:mainelshhms}, \ref{thm:mainjohnson}, and \ref{thm:mainjohnsonhhty}
using only multiplier ideals.
This new proof of Theorem
\ref{thm:mainelshhms} in equal characteristic zero does not rely on the
N\'eron-type desingularization theorem due to Artin and Rotthaus
\cite[Theorem 1]{AR88}.
This proof therefore answers a question of Schoutens \cite[p.\ 179
and p.\ 187]{Sch03}, who asked whether one could show Theorem
\ref{thm:mainelshhms} in equal characteristic zero without using
\cite[Theorem 1]{AR88}.
Note that the proof described above using
Robinson's version of test ideals and Theorem
\ref{thm:ty23like} still depends on the theorem of Artin and Rotthaus
\cite[Theorem 1]{AR88} because as far as we are aware, all proofs
for the existence of big Cohen--Macaulay $R^+$-algebras in equal
characteristic zero rely on \cite[Theorem 1]{AR88} or stronger results.
\begin{remark}
  After writing this paper, we were informed by Wenliang Zhang that Zhang had
  independently shown a version of Theorem \ref{thm:mainjohnson} when
  $I$ is radical and $R$ has geometrically reduced formal
  fibers using the methods in \cite{MS18}.
\end{remark}
\subsection{Outline}
This paper is structured as follows.\medskip
\par We review some preliminaries in \S\ref{sect:prelims}.
We show that the definition of symbolic
powers in \eqref{eq:hhsymbdef} matches the definition used in \cite{HH02}
(Lemma \ref{lem:symbolicdefs}).
We also prove a reduction step for our main theorems (Lemma
\ref{lem:hh02reductions}) that will be used in both parts of this paper.
We then review the notions of absolute integral closures originating from
\cite{Art71} and of (balanced) big Cohen--Macaulay algebras from
\cite{Hoc75queens,Sha81}.
We then prove that weakly functorial big
Cohen--Macaulay $R^+$-algebras exist in equal characteristic zero (Theorem
\ref{thm:weaklyfunctorialequalchar0}) using the methods developed in
\cite{AS07,DRG19}.
This last result is of independent interest.\medskip
\par The remainder of the paper is divided into two parts, corresponding to the
two proof strategies for our main theorems using closure theory and
multiplier/test ideals, respectively.\medskip
\par In Part \ref{part:closure-elshhms}, we prove Theorems \ref{thm:mainelshhms},
\ref{thm:mainjohnson}, and \ref{thm:mainjohnsonhhty} via closure theory.
In \S\ref{sect:closureoperations}, we review the necessary background material
we need from closure theory.
The closure operations we use in mixed characteristic, namely
full extended plus ($\epf$) closure \cite{Hei01} and weak $\epf$
($\wepf$) closure \cite{Jia21}, are defined in \S\ref{sect:clmixedchar}.
We then state Dietz's axioms for closure operations \cite{Die10} and R.G.'s
algebra axiom \cite{RG18}, and collect references for proofs of these axioms and
for Brian\c{c}on--Skoda-type theorems for various closure operations in Table
\ref{tab:dietzclosures}.
We also state a result of R.G. from \cite{RG18} stating that Dietz
closures satisfying R.G.'s algebra axiom are contained in big Cohen--Macaulay
algebra closures (Proposition \ref{prop:rg41}).
Finally, we prove Theorems \ref{thm:mainelshhms}, \ref{thm:mainjohnson}, and
\ref{thm:mainjohnsonhhty} in \S\ref{sect:proofs}.\medskip
\par In Part \ref{part:elshhms}, we prove Theorems
\ref{thm:mainelshhms},
\ref{thm:mainjohnson}, \ref{thm:mainjohnsonhhty}, and \ref{thm:ty23like} via
multiplier/test ideals.
In \S\ref{sect:hls}, we review some definitions and preliminaries on
multiplier/test ideals.
In particular, we define the big Cohen--Macaulay test ideals from
\cite{MS}, the $+$-test ideals from \cite{HLS}, and the
multiplier ideals from \cite{dFM09,ST}.
In \S\ref{sect:bcmtestfixedgens}, we state the definition of
Robinson's version of perfectoid/big Cohen--Macaulay test ideals from
\cite{Rob}.
We then prove that they are related to
the test ideals from \S\ref{sect:hls} (Lemma
\ref{lem:testidealasasum}).
In the remainder of \S\ref{sect:bcmtestfixedgens}, we prove
foundational results on Robinson's test ideals necessary for our proofs of our
main theorems, in particular an
unambiguity statement for exponents (Proposition \ref{prop:notambig}),
the subadditivity theorem (Theorem \ref{thm:subadditivity}), and a comparison
with multiplier ideals (Theorem \ref{thm:compmultideals}).
Finally, in \S\ref{sect:mainproofs}, we prove Theorems
\ref{thm:mainelshhms},
\ref{thm:mainjohnson}, \ref{thm:mainjohnsonhhty}, and \ref{thm:ty23like}
using the various versions of test ideals in \cite{MS,PRG,Rob,ST,HLS}.
In equal characteristic zero, we also use multiplier ideals.

\addtocontents{toc}{\protect\setcounter{tocdepth}{1}}
\subsection*{Conventions}
All rings are commutative with identity, and all ring maps are unital.

\subsection*{Acknowledgments}
We are grateful to
Hailong Dao,
Rankeya Datta,
Neil Epstein,
Elo\'{i}sa Grifo,
Jack Jeffries,
Zhan Jiang,
J\'anos Koll\'ar,
Zhenqian Li,
Mircea Musta\c{t}\u{a},
Rebecca R.G.,
Daniel Smolkin,
Kevin Tucker,
and
Farrah Yhee
for helpful conversations.
We would especially like to thank
Melvin Hochster, 
Linquan Ma,
Karl Schwede,
Irena Swanson,
Jugal Verma,
and
Wenliang Zhang
for discussing their results with us.
We are also grateful to Karl Schwede for pointing out the reference \cite{Rob}.
Finally, we would like to thank Farrah Yhee for helpful edits
on multiple drafts of the introduction to this manuscript.
\addtocontents{toc}{\protect\setcounter{tocdepth}{2}}

\addtocontents{toc}{\protect\medskip}
\section{Preliminaries}\label{sect:prelims}
\subsection{Symbolic powers}
For completeness,
we show that the definition of symbolic powers in \eqref{eq:hhsymbdef}
matches the definition used in \cite{HH02}.
The former definition is the one used in the survey \cite[p.\
388]{DDSGHNB18}.
A similar argument will appear in the proof of Proposition
\ref{prop:elshhmsmd}$(\ref{prop:elshhmsmdbcm})$.
\begin{lemma}\label{lem:symbolicdefs}
  Let $R$ be a Noetherian ring, and let $I \subseteq R$ be an ideal.
  Denote by $R_W$ the localization of $R$ with respect to the multiplicative set
  $W = \bigcup_{\fp \in \Ass_R(R/I)} \fp$.
  Then, we have
  \begin{equation}\label{eq:diffdefs}
    I^nR_W \cap R = \bigcap_{\fp \in \Ass_R(R/I)} I^nR_\fp \cap R.
  \end{equation}
\end{lemma}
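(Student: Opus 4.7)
The plan is to unwind both sides as colon-ideal conditions and then invoke prime avoidance. Set $S \coloneqq R \setminus W = \bigcap_{\fp \in \Ass_R(R/I)}(R \setminus \fp)$, so that $R_W = S^{-1}R$. An element $x \in R$ lies in $I^nR_W \cap R$ exactly when there exists $s \in S$ with $sx \in I^n$, and lies in $I^nR_\fp \cap R$ exactly when there exists $s_\fp \in R \setminus \fp$ with $s_\fp x \in I^n$. Equivalently, writing $J \coloneqq (I^n :_R x) = \{r \in R \mid rx \in I^n\}$, membership in $I^nR_W \cap R$ is equivalent to $J \cap S \ne \emptyset$, and membership in the right-hand side of \eqref{eq:diffdefs} is equivalent to $J \not\subseteq \fp$ for every $\fp \in \Ass_R(R/I)$.

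For the inclusion $\subseteq$, any $s \in S$ satisfies $s \notin \fp$ for all $\fp \in \Ass_R(R/I)$ by definition of $S$, so a single witness $s \in J \cap S$ furnishes a witness $s_\fp = s$ for each $\fp$. Hence $I^nR_W \cap R \subseteq \bigcap_{\fp} I^nR_\fp \cap R$.

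For the reverse inclusion, suppose $x$ belongs to the intersection on the right, so $J \not\subseteq \fp$ for every $\fp \in \Ass_R(R/I)$. Because $R$ is Noetherian, the set $\Ass_R(R/I)$ is finite, so prime avoidance applies: an ideal not contained in any member of a finite family of prime ideals is not contained in their union. Thus there exists
\[
  s \in J \setminus \bigcup_{\fp \in \Ass_R(R/I)} \fp = J \cap S,
\]
which gives $sx \in I^n$ with $s \in S$, proving $x \in I^nR_W \cap R$. The main (and essentially only) nontrivial ingredient is the use of prime avoidance, which in turn requires the finiteness of $\Ass_R(R/I)$; everything else is just rewriting localizations in terms of colon ideals.
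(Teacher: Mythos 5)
Your proof is correct, and it packages the argument more cleanly than the paper does. Both arguments ultimately rest on prime avoidance and the finiteness of $\Ass_R(R/I)$, but where you apply prime avoidance directly to the colon ideal $J = (I^n :_R x)$ --- observing that $J \not\subseteq \fp$ for each $\fp \in \Ass_R(R/I)$ forces $J \not\subseteq \bigcup_\fp \fp$ --- the paper instead unrolls the standard prime-avoidance construction by hand: it restricts to the primes maximal in $\Ass_R(R/I)$, picks separating elements $d_\ell \in \fp_\ell \setminus \bigcup_{j\neq\ell}\fp_j$ (itself a prime-avoidance step), and assembles an explicit witness $c = \sum_\ell c_\ell \prod_{j\neq\ell} d_j$ lying in $J$ but outside every $\fp_\ell$. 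Your reformulation via the colon ideal makes the invocation of prime avoidance a one-liner and avoids the bookkeeping with maximal associated primes, while the paper's version has the minor virtue of exhibiting the denominator $c$ explicitly. Both are valid; yours is shorter and identifies the single nontrivial input more transparently.
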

\begin{proof}
  The natural maps $R \to R_\fp$ factor through $R_W$ by the universal property
  of localization.
  We therefore have inclusions $I^nR_W \cap R \subseteq I^nR_\fp \cap R$ for
  every $\fp \in \Ass_R(R/I)$, which yields the inclusion ``$\subseteq$'' in
  \eqref{eq:diffdefs}.
  Thus, it suffices to show the inclusion ``$\supseteq$'' in
  \eqref{eq:diffdefs}.
  \par Let $u \in \bigcap_{\fp \in \Ass_R(R/I)} I^nR_\fp \cap R$.
  Denote by $\{\fp_\ell\}$ the subset of $\Ass_R(R/I)$ consisting of the
  associated primes that are maximal in $\Ass_R(R/I)$ with respect to inclusion.
  For each $\ell$, there exists an element $c_\ell \notin
  \fp_\ell$ such that $c_\ell u \in I^n$.
  For each $\ell$, we can also choose $d_\ell \in \fp_\ell - \bigcup_{j \ne
  \ell} \fp_j$ by prime avoidance.
  Now consider the element
  \begin{align*}
    c &= \sum_\ell \biggl( c_\ell \prod_{j \ne \ell} d_j \biggr).
  \intertext{Note that $c \notin \bigcup_\ell \fp_\ell$, for
  otherwise if $c \in \fp_{\ell_0}$ for some $\ell_0$, we have}
    c_{\ell_0} \prod_{j \ne \ell_0} d_j &=
    c - \sum_{\ell \ne \ell_0} \biggl( c_\ell \prod_{j \ne \ell} d_j \biggr) \in
    \fp_{\ell_0},
    \intertext{which contradicts the fact that $c_{\ell_0} \notin \fp_{\ell_0}$
    and $d_j \notin \fp_{\ell_0}$ for every $j \ne \ell_0$.
    Since $c_\ell u \in I^n$ by assumption, we see that}
    cu &= \sum_\ell \biggl( c_\ell u \prod_{j \ne \ell} d_j \biggr) \in I^n.
  \end{align*}
  Finally, since $c \in W = R - \bigcup_\ell \fp_\ell$,
  we have $u \in I^nR_W \cap R$.
\end{proof}
\begin{remark}
  There is a different notion of symbolic powers in the literature due to
  Verma \cite[p.\ 205]{Ver87} obtained by intersecting over minimal primes
  instead of associated primes on the right-hand side of \eqref{eq:diffdefs}.
  Using notation from \cite[p.\ 691]{HJKN23}, this version of the $n$-th
  symbolic power is defined as follows:
  \[
    \prescript{\mathsf{m}}{}{I}^{(n)} \coloneqq \bigcap_{\fp \in \Min_R(R/I)}
    I^nR_\fp \cap R.
  \]
  \par The analogue of Theorem \ref{thm:mainelshhms} using this notion of
  symbolic powers is not true.
  For example, let $K$ be a field and let $R = K[x,y]$.
  For every integer $k > 0$, the ideal
  \[
    I_k = (x)(x,y)^{k} = (x^{k+1},x^ky,\ldots,xy^k) = (x) \cap (x,y)^{k+1} \subseteq R
  \]
  satisfies
  \[
    \prescript{\mathsf{m}}{}{(I_k)}^{(kn)} = (x)^{kn} \not\subseteq
    (x)^n(x,y)^{kn} = (I_k)^n.
  \]
\end{remark}
\subsection{Reductions for main theorems}
\par We prove some initial reductions for
Theorems \ref{thm:mainjohnson} and
\ref{thm:mainjohnsonhhty}
that will be used in both Parts \ref{part:closure-elshhms} and
\ref{part:elshhms}.
\begin{lemma}[{cf.\ \citeleft\citen{HH02}\citemid Lemma 2.4$(b)$ and Theorem
  4.4\citepunct \citen{TY08}\citemid Theorem 3.1\citeright}]
  \label{lem:hh02reductions}
  It suffices to show Theorems \ref{thm:mainjohnson} and
  \ref{thm:mainjohnsonhhty} under the additional assumptions that
  $R$ is complete local ring $(R,\fm)$, and that
  the localizations of $R$ at the associated primes of $R/I$ have
  infinite residue fields.
  Moreover, after this reduction,
  we may further replace the residue field $k$ of $(R,\fm)$ with any field
  $k'$ containing $k$.
\end{lemma}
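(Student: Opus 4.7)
The plan is to successively replace $R$ by faithfully flat extensions under which both symbolic powers and the invariant $h$ are well-controlled, and to check that the containments \eqref{eq:mainjohnsonincl} and \eqref{eq:mainjohnsonhhtyincl} descend to $R$. I would proceed in three steps.

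\emph{Localization at a maximal ideal.} Containment of ideals in $R$ can be verified after localizing at every maximal ideal $\fm$. Symbolic powers commute with localization (since $\Ass_{R_\fm}(R_\fm/IR_\fm)$ consists precisely of those $\fp R_\fm$ with $\fp\in\Ass_R(R/I)$ and $\fp\subseteq\fm$), and the analytic spread of each $IR_\fp$ for $\fp\subseteq\fm$ is unchanged when computed inside $R_\fm$. Hence we may assume $R$ is regular local.

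\emph{Completion.} For $(R,\fm)$ regular local with $\fm$-adic completion $\widehat{R}$, the map $R\to\widehat{R}$ is faithfully flat and $\widehat{R}$ is regular local. Faithful flatness yields $J=J\widehat{R}\cap R$ for any ideal $J\subseteq R$. Every associated prime of $\widehat{R}/I\widehat{R}$ contracts to an element of $\Ass_R(R/I)$, which together with $I^{(m)}\widehat{R}=(I\widehat{R})^{(m)}$ and preservation of analytic spread along the flat local extensions $R_\fp\to\widehat{R}_\fq$ for $\fq$ lying over $\fp$ ensures that the invariant $h$ does not increase. Thus the conclusions of the two theorems applied to $I\widehat{R}\subseteq\widehat{R}$ descend to $R$ after contracting along $R\to\widehat{R}$.

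\emph{Enlarging the residue field.} For complete local regular $(R,\fm,k)$, set $R'=\widehat{R[t]_{\fm R[t]}}$. Then $R\to R'$ is faithfully flat, $R'$ is again complete local regular, and any prime $\fq\subseteq R'$ contracts to a prime $\fp\subseteq R$ with residue field $k(\fq)\supseteq k(\fp)(t)$, which is infinite. Since associated primes of $R'/IR'$ contract to associated primes of $R/I$, the residue fields of $R'$ at all associated primes of $R'/IR'$ are infinite. For the ``moreover'' clause, given a field extension $k\hookrightarrow k'$, I would invoke Cohen's structure theorem to write $R$ as $A[[x_1,\ldots,x_n]]/J$ over a field or Cohen $p$-ring $A$ with residue field $k$, lift to a faithfully flat extension $A\hookrightarrow A'$ of coefficient rings with residue field $k'$, and set $R'=A'[[x_1,\ldots,x_n]]/JA'[[x_1,\ldots,x_n]]$.

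The main obstacle is the bookkeeping in the second step: one must verify that symbolic powers, analytic spreads, and products of symbolic powers all descend compatibly along $R\to\widehat{R}$ when $I$ need not be radical. The key inputs are that $\Ass_{\widehat{R}}(\widehat{R}/I\widehat{R})$ lies over $\Ass_R(R/I)$, that analytic spread is preserved under the relevant flat local extensions with sufficiently nice fibers, and that faithful flatness combined with $I^{(m)}\widehat{R}=(I\widehat{R})^{(m)}$ ensures that each side of each containment in $\widehat{R}$ contracts correctly back to $R$.
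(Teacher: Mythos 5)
Your outline hits the same three moves as the paper (localize, pass to a faithfully flat local extension, enlarge the residue field), but the paper organizes them in a different order that avoids exactly the issue you flag at the end, and your treatment of the completion step has a genuine gap.

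The paper's Lemma~\ref{lem:hh02reductions} first adjoins an indeterminate \emph{before} localizing or completing: passing to $R[X]$ makes the residue fields at the associated primes of $R/I$ infinite, using \cite[Discussion 2.3$(b)$]{HH02} to control $\Ass_{R[X]}(R[X]/IR[X])$, analytic spreads, and symbolic powers. It then localizes, and proves a general ``it suffices to pass to \emph{any} faithfully flat local extension $R\hookrightarrow S$'' statement: the key point is that $(IS)^{(n)}\cap R = I^{(n)}$, which it deduces from Bourbaki's theorem on primary decomposition under flat base change, together with \cite[Discussion 2.3$(c)$]{HH02} for the inequality on analytic spreads. Finally it uses a Bourbaki gonflement to produce a single $S$ that is simultaneously complete and has residue field $k'$, giving the ``moreover'' clause. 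Your completion step instead \emph{asserts} $I^{(m)}\widehat{R}=(I\widehat{R})^{(m)}$ and ``preservation of analytic spread along $R_\fp\to\widehat{R}_\fq$'' and then, in your closing paragraph, hedges the whole thing with ``with sufficiently nice fibers.'' That hedge is precisely the point: the whole reason case (II) of Hochster--Huneke's question was open is that for non-excellent regular $R$ the formal fibers need not be nice, and the associated primes of $I\widehat{R}$ need not behave the way one would like. So the assertion $I^{(m)}\widehat{R}=(I\widehat{R})^{(m)}$ cannot be waved through; it has to come from the flat base change theory for primary decomposition (the Bourbaki reference the paper cites), and analytic spread can only be controlled via an inequality ($\le$), not equality. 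Also note that your $R[t]$ trick in the third step makes the residue field of the \emph{maximal} ideal infinite but does not by itself control the residue fields at the associated primes of $IR'$ after completing again; what you need is the result about $\Ass_{R[X]}(R[X]/IR[X])$ applied \emph{before} completing, which is why the paper does that step first. Your coefficient-ring-lifting idea for the ``moreover'' clause is essentially the gonflement construction, so that part is fine.
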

\begin{proof}
  We proceed in four steps.
  \begin{step}\label{step:infiniteresidues}
    It suffices to consider the case when the localizations of $R$ at the
    associated primes of $R/I$ have infinite residue fields.
  \end{step}
  Let $X$ be an indeterminate.
  Since $R \to R[X]$ is faithfully flat, it suffices to show
  \eqref{eq:mainjohnsonincl} and \eqref{eq:mainjohnsonhhtyincl}
  after extending scalars to $R[X]$.
  By \cite[Discussion 2.3$(b)$]{HH02}, the associated primes of $IR[X]$ are of
  the form $\fp R[X]$ where $\fp \in \Ass_R(R/I)$, the analytic spreads of
  $IR[X]_{\fp R[X]}$ are the same as those for $IR_\fp$, and the
  symbolic powers of $IR[X]$ are the extensions of the symbolic powers of $I$.
  We may therefore replace $R$ by $R[X]$ and $I$ by $I[X]$ to assume that the
  localizations of $R$ at the associated primes of $R/I$ have infinite residue
  fields.
  For Theorem \ref{thm:mainjohnsonhhty}, we furthermore replace $R[X]$ by
  $R[X]_{\fm R[X]}$ to remain in the local case.
  \begin{step}\label{step:local}
    It suffices to consider the case when $R$ is local and the
    localizations of $R$ at the associated primes of $R/I$ have infinite
    residue fields.
  \end{step}
  For Theorem \ref{thm:mainjohnson}, the inclusion \eqref{eq:mainjohnsonincl}
  can be checked after localizing at every prime ideal of $R$, and hence we may
  assume that $R$ is local.
  For Theorem \ref{thm:mainjohnsonhhty}, we are already in the local case.
  \begin{step}\label{step:completelocal}
    For $R$ as in Step \ref{step:local},
    it suffices to show Theorems \ref{thm:mainjohnson} and
    \ref{thm:mainjohnsonhhty} after passing to a faithfully flat local extension
    $R \hookrightarrow S$.
  \end{step}
  \par By \cite[Discussion 2.3$(c)$]{HH02},
  the analytic spread of $IS_\fq$ for every $\fq \in
  \Ass_{S}(S/IS)$ is at most the analytic spread
  of $IR_{\fq \cap R}$.
  Moreover, we have
  \[
    (IS)^{(n)} \cap R = \bigcap_{\fq \in
    \Ass_{S}(S/IS)} I^n S_\fq \cap R
    = \bigcap_{\fp \in \Ass_R(R/I)} I^n R_\fp \cap R = I^{(n)}
  \]
  for all $n \ge 1$, where the middle equality follows from
  \cite[Chapter IV, \S2, no.\ 6, Theorem 2 and Corollary
  1]{Bou72}
  and the fact that $R_\fp \to S_\fq$
  is faithfully flat when $\fp = \fq \cap R$.
  We may therefore replace $R$ by $S$.
  Note that for every $\fq \in \Ass_{S}(S/IS)$,
  the residue field of $S_\fq$ contains the residue field
  of $R_{\fq \cap R}$, and hence is still infinite.
  \begin{step}
    For $R$ as in Step \ref{step:local}, we can find a faithfully flat local
    extension $(R,\fm,k) \hookrightarrow (S,\fn,l)$ such that $l = k'$ and $S$
    is complete.
  \end{step}
  By \cite[Chapitre IX, Appendice, n\textsuperscript{o} 2, Corollaire au
  Th\'eor\`eme 1]{BouAC89}, there exists a gonflement $(R,\fm) \to
  (R',\fm')$ where $R'/\fm' = k'$.
  This map is a flat local map such that $R'$ is regular
  \cite[Chapitre IX, Appendice, n\textsuperscript{o} 2, Proposition 2]{BouAC89}.
  We now consider the composition $R \to R' \to \widehat{R'}$,
  where the second map is the $\fm'$-adic completion map.
  This map is faithfully flat, and Step \ref{step:completelocal}
  shows that we may replace $R$ by $S = \widehat{R'}$.
\end{proof}

\subsection{Absolute integral closures}\label{sect:rplus}
\par We now define the absolute integral closure of an
integral scheme following \cite{HLS}.
This definition is modeled after Artin's notion of the absolute integral closure
of an integral domain \cite[p.\ 283]{Art71}.
\begin{definition}[see {\cite[\S2.2]{HLS}}]\label{def:xplus}
  Let $X$ be an integral scheme.
  Fix an algebraic closure $\overline{K(X)}$ of the function field $K(X)$ of
  $X$, and consider the collection $\{f\colon Y \to X\}$
  of all finite surjective morphisms from integral schemes
  fitting into the commutative diagram
  \[
    \begin{tikzcd}
      \Spec\bigl(\overline{K(X)}\bigr) \rar \arrow{dr} & Y\dar{f}\\
      & X\mathrlap{.}
    \end{tikzcd}
  \]
  The \textsl{absolute integral closure of $X$} is the inverse limit
  \[
    X^+ \coloneqq \varprojlim_{f\colon Y \to X} Y
  \]
  in the category of schemes, which exists by \cite[Proposition 8.2.3]{EGAIV3}.
  We denote the canonical projection morphism by $\nu\colon X^+ \to X$.
  \par Now suppose that $X = \Spec(R)$ for an integral domain $R$.
  The \textsl{absolute integral closure of $R$} is the ring $R^+$ of global
  sections of $X^+$, which is an affine scheme by \cite[(8.2.2)]{EGAIV3}.
  The absolute integral closure $R^+$ can also be described as the integral
  closure of $R$ in an algebraic closure $\overline{\Frac(R)}$ of its fraction
  field.
\end{definition}
\subsection{Big Cohen--Macaulay algebras}
Next, we define (balanced) big Cohen--Macaulay algebras.
\begin{citeddef}[{\citeleft\citen{Hoc75queens}\citemid pp.\ 110--111\citepunct
  \citen{Sha81}\citemid Definition 1.4\citeright}]
  Let $(R,\fm)$ be a Noetherian local ring of dimension $d$, and let
  $x_1,x_2,\ldots,x_d$ be a system of parameters of $R$.
  An $R$-algebra $B$ is \textsl{big Cohen--Macaulay over $R$ with respect to
  $x_1,x_2,\ldots,x_d$} if $x_1,x_2,\ldots,x_d$ is a regular sequence on $B$,
  and is a \textsl{(balanced) big Cohen--Macaulay algebra over $R$} if it is big
  Cohen--Macaulay over $R$ with respect to every system of parameters
  $x_1,x_2,\ldots,x_d$.
\end{citeddef}
\begin{remark}\label{rem:bigcmalgexist}
  Let $(R,\fm)$ be a Noetherian local ring.
  \begin{enumerate}[label=$(\roman*)$,ref=\roman*]
    \item\label{rem:bigcmalgexistequalchar}
      If $R$ is of equal characteristic, then there is a big Cohen--Macaulay
      algebra over $R$ \cite[Theorem 8.1]{HH92} (see also \cite[Theorems 11.1
      and 11.4]{Hoc94}).
      This big Cohen--Macaulay algebra can be taken to be an $R^+$-algebra
      if $R$ is a domain.
      In equal characteristic $p > 0$, this follows from the construction in
      \cite[Theorem 8.1]{HH92} and applying \cite[Proposition 1.2]{HH95} (see
      also \citeleft\citen{Hoc94}\citemid Theorem 11.1\citepunct
      \citen{Die07}\citemid Theorem 6.9\citeright).
      In equal characteristic zero, this was shown in \cite[Corollary
      5.2]{DRG19},
      and can also be shown using the methods in \cite{HH95,HHchar0}, as pointed
      out to us by Hochster (cf.\ \cite[Theorem 11.4]{Hoc94}).
    \item If $R$ is an excellent biequidimensional domain
      of equal characteristic $p > 0$ or a domain of equal
      characteristic $p > 0$ that is a homomorphic image of a Gorenstein local
      ring, then the absolute integral closure $R^+$ of
      $R$ is a big Cohen--Macaulay algebra over $R$
      \citeleft\citen{HH92}\citemid Main Theorem 5.15\citepunct
      \citen{HL07}\citemid Corollary 2.3$(b)$\citeright.
    \item If $R$ is of mixed characteristic, then $R$ has a big Cohen--Macaulay
      algebra 
      \cite[Th\'eo\-r\`eme 0.7.1]{And18b}.
      This big Cohen--Macaulay algebra can be taken to be an $R^+$-algebra if
      $R$ is a domain
      \citeleft\citen{Shi18}\citemid Corollary 6.5 and Remark 6.6\citepunct
      \citen{And20}\citemid Theorem 3.1.1\citeright.
    \item\label{rem:bigcmalgexistrplusmixedchar}
      If $R$ is an excellent domain of mixed characteristic, then
      the $p$-adic completion $\widehat{R^+}$ of the absolute integral closure
      $R^+$ of $R$ is a big Cohen--Macaulay algebra over $R$
      \citeleft\citen{Bha}\citemid Corollary 5.17\citepunct
      \citen{BMPSTWW}\citemid Corollary 2.10\citeright.
  \end{enumerate}
\end{remark}
\subsection{Weakly functorial big Cohen--Macaulay
\texorpdfstring{\for{toc}{$R^+$}\except{toc}{\emph{R}\textsuperscript{+}}}{R\textasciicircum+}-algebras
in equal characteristic zero}
We prove the following result showing the existence of weakly functorial big
Cohen--Macaulay $R^+$-algebras in equal characteristic zero using the work of
Aschenbrenner and Schoutens \cite{AS07} and
Dietz and R.G. \cite{DRG19}.
This result is of independent interest and will only be used in Part
\ref{part:elshhms} of this paper when proving Theorem \ref{thm:ty23like} in
equal characteristic zero.
\par The formulation of this existence result
is based on the mixed characteristic statements in
\citeleft\citen{And20}\citemid Theorems 1.2.1 and
4.1.1\citepunct \citen{MSTWW}\citemid Theorem A.5\citeright.
See \cite[\S2.1]{Sch10} for definitions of ultraproducts and ultrarings,
see \cite[\S4]{AS07} for the definition of a Lefschetz hull and \cite[p.\
261]{AS07} for the definition of an absolutely normalizing Lefschetz hull,
and see \cite[Definition 2.3 and Lemma 2.5]{DRG19} for the definition of a
rational seed.
\begin{theorem}\label{thm:weaklyfunctorialequalchar0}
  Let $f\colon (R,\fm) \to (S,\fn)$ be a local map of Noetherian local domains
  of equal characteristic zero.
  Fix faithfully flat, absolutely normalizing Lefschetz hulls
  \[
    \fD(R) = \ulim_{w \in W} R_w \qquad \text{and} \qquad
    \fD(S) = \ulim_{w \in W} S_w
  \]
  for $R$ and $S$ with characteristic $p > 0$ approximations $R_w$ and $S_w$,
  where $W$ is an infinite set with a non-principal ultrafilter
  $\cW$, such that the diagram
  \[
    \begin{tikzcd}
      \fD(R) \rar & \fD(S)\\
      R \rar\uar & S\uar
    \end{tikzcd}
  \]
  commutes.
  \begin{enumerate}[label=$(\roman*)$,ref=\roman*]
    \item\label{thm:weakfunchar0exists}
      There exists an ultraring $B_\natural$ with respect to $\cW$ that
      is an $R^+$-algebra and is a big Cohen--Macaulay algebra over $R$.
    \item\label{thm:weaklyfunctorialequalchar0funct}
      For every choice of $B_\natural$ as in $(\ref{thm:weakfunchar0exists})$,
      there exists a commutative diagram
      \begin{equation}\label{eq:weakfunchar0}
        \begin{tikzcd}
          B_\natural \rar & C_\natural\\
          R^+ \rar{f^+}\uar & S^+\uar\\
          R \rar{f} \uar & S \uar
        \end{tikzcd}
      \end{equation}
      where $C_\natural$ is a big Cohen--Macaulay algebra over $S$ that is an
      ultraring with respect to $\cW$.
      Moreover, $f^+$ can be given in advance.
  \end{enumerate}
\end{theorem}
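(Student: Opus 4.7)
The plan is to construct the big Cohen--Macaulay algebras as ultraproducts of characteristic $p>0$ big Cohen--Macaulay algebras, built componentwise via the seed algebra machinery of \cite{DRG19}, and to transfer the resulting structural properties to equal characteristic zero through the Lefschetz hulls $\fD(R)$ and $\fD(S)$. This runs in close parallel to the mixed characteristic existence results of \cite{And20,MSTWW}, with the Lefschetz hull playing the role that perfectoid algebras play there.

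For part $(\ref{thm:weakfunchar0exists})$, I would fix for each $w \in W$ a big Cohen--Macaulay $R_w^+$-algebra $B_w$, which exists since $R_w$ is a Noetherian local domain of equal characteristic $p(w)>0$ (for example, via \cite[Theorem 11.1]{Hoc94} combined with \cite[Proposition 1.2]{HH95}, or the seed construction of \cite{DRG19}). Define $B_\natural \coloneqq \ulim_{w \in W} B_w$. The absolutely normalizing hypothesis on $\fD(R)$ ensures that every module-finite domain extension $R \hookrightarrow R'$ inside a fixed algebraic closure of $\Frac(R)$ is approximated by module-finite extensions $R_w \hookrightarrow R'_w$ inside $R_w^+$; taking the direct limit of the induced maps $R' \to \ulim_w R'_w \to B_\natural$ over all such $R'$ produces the desired $R^+$-algebra structure on $B_\natural$. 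The big Cohen--Macaulay property transfers by a \L o\'s-type argument: a system of parameters $x_1,\ldots,x_d$ of $R$ approximates, for $\cW$-almost all $w$, to a system of parameters of $R_w$, which is a regular sequence on $B_w$, and regular sequences are preserved under ultraproducts.

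For part $(\ref{thm:weaklyfunctorialequalchar0funct})$, given $B_\natural = \ulim_w B_w$ as above and the prescribed map $f^+\colon R^+ \to S^+$, I would approximate $f^+$ componentwise by maps $f_w^+\colon R_w^+ \to S_w^+$ extending $f_w\colon R_w \to S_w$, which is possible for $\cW$-almost all $w$ by the absolutely normalizing hypothesis on $\fD(S)$. For each such $w$ I then invoke the characteristic $p$ theory of seed algebras \cite[Theorem 4.8 and Corollary 5.2]{DRG19}: the base change $B_w \otimes_{R_w^+} S_w^+$ is naturally a rational seed over $S_w^+$, hence can be enlarged to a big Cohen--Macaulay $S_w^+$-algebra $C_w$ admitting a compatible map from $B_w$. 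Setting $C_\natural \coloneqq \ulim_w C_w$ and repeating the arguments of part $(\ref{thm:weakfunchar0exists})$ yields the entire diagram \eqref{eq:weakfunchar0} together with the claimed regularity of systems of parameters on $C_\natural$.

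The principal obstacle is ensuring that the componentwise seed constructions producing the $C_w$ can be carried out $\cW$-measurably and in a way that is compatible not only with the prescribed $f^+$ but also with the fixed choice $B_\natural = \ulim_w B_w$ from part $(\ref{thm:weakfunchar0exists})$. This is precisely where the absolutely normalizing hypothesis on the Lefschetz hulls does the work, since it allows integral extensions inside $\overline{\Frac(R)}$ and $\overline{\Frac(S)}$, together with the chosen extension $f^+$ of $f$, to be recognized at the level of approximations. The remaining technical content is then verifying that the characteristic $p>0$ seed-theoretic picture of \cite{DRG19} ultraproducts coherently into equal characteristic zero, which mirrors and slightly refines arguments already present in \cite{AS07,DRG19}.
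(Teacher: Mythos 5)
Your high-level strategy—transfer through the Lefschetz hull to characteristic $p > 0$, construct big Cohen--Macaulay algebras there, and take ultraproducts—is the right one and is the conceptual engine behind the result. However, the paper does not carry out this construction componentwise. Instead it invokes the rational seed theory of \cite{DRG19} as a black box: part $(\ref{thm:weakfunchar0exists})$ follows immediately from the facts that $R$ is a rational seed (\cite[Theorem 2.4]{DRG19}, using the absolutely normalizing property and \cite[(7.7)]{AS07}) and that $R^+$ is then a rational seed over $R$ (\cite[Theorem 5.1 and Corollary 5.2]{DRG19}); part $(\ref{thm:weaklyfunctorialequalchar0funct})$ follows from the observation that $B_\natural \otimes_R S^+ \cong B_\natural \otimes_R S \otimes_S S^+$ is a rational seed over $S$, using the tensor-product stability of rational seeds (\cite[Theorems 3.3 and 3.5]{DRG19}), with \cite[Proposition 1.2]{HH95} only needed to produce $f^+$ when it is not given. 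Crucially, no componentwise approximation of $f^+$ is attempted.

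Your proposal is essentially trying to \emph{re-derive} parts of \cite{DRG19} rather than use them, and this creates the gap you yourself flag at the end. Concretely: the compatibility of the maps $R' \to B_\natural$ across all module-finite $R' \subseteq R^+$, and the existence of a coherent system of approximants $f^+_w\colon R_w^+ \to S_w^+$ of a pre-given $f^+$ simultaneously compatible with the fixed choice of $B_w$'s, is precisely the content that the rational seed machinery was designed to encapsulate. The absolutely normalizing hypothesis gives you approximations for a \emph{single} finite extension at a time; making them globally consistent across the filtered system defining $R^+$ and $S^+$, and consistent with the already-chosen ultraproduct $B_\natural$, is nontrivial and is exactly what \cite[Theorem 5.1, Corollary 5.2, Theorems 3.3 and 3.5]{DRG19} handle. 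The paper's route through $B_\natural \otimes_R S^+$ sidesteps the componentwise approximation of $f^+$ altogether, and this is what makes the argument short. Separately, your citations of \cite[Theorem 4.8 and Corollary 5.2]{DRG19} for characteristic-$p$ seed facts are misplaced—those are equal characteristic zero statements; the relevant characteristic $p > 0$ seed theory is in \cite{Die07}. You should either cite the \cite{DRG19} theorems at the equal characteristic zero level (as the paper does) or, if you really want to argue componentwise, spell out the compatibility argument that makes the ultraproduct of the $C_w$'s receive a well-defined map from $B_\natural$ over $f^+$.
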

\begin{proof}
  The first diagram exists by \cite[Remark 4.26]{AS07}, where we note that the
  choice of Lefschetz hull depends on the choice of a Lefschetz field of
  sufficiently large cardinality (e.g.\ larger than $2^{\lvert R \rvert}$ and
  $2^{\lvert S \rvert}$).
  We use the construction in \cite[Remark 4.26]{AS07} to moreover choose
  $\fD(R)$ and $\fD(S)$ to be absolutely normalizing.
  \par We next show $(\ref{thm:weakfunchar0exists})$.
  Since $\fD(R)$ was chosen to be absolutely normalizing, the construction in
  \cite[(7.7)]{AS07}
  shows that $R$ is a rational seed in the sense of \cite[Definition
  2.3]{DRG19} (see also \cite[Theorem 2.4]{DRG19}).
  By \cite[Theorem 5.1]{DRG19}, the absolute integral closure
  $R^+$ is also a rational seed over $R$ (see \cite[Corollary 5.2]{DRG19}),
  and hence $B_\natural$ exists.
  \par For $(\ref{thm:weaklyfunctorialequalchar0funct})$, if $f^+$ is not given
  in advance, then we can construct
  the bottom square in the diagram by \cite[Proposition
  1.2]{HH95}.
  Now given such a square,
  since $B_\natural \otimes_R S$ and $S^+$ are rational
  seeds over $S$ by \cite[Theorem 3.5 and Corollary 5.2]{DRG19}, the
  tensor product
  \[
    B_\natural \otimes_R S^+ \cong
    B_\natural \otimes_R S \otimes_S S^+
  \]
  is a rational seed over $S$ by \cite[Theorem 3.3]{DRG19}.
  Thus, there is a map $B_\natural \otimes_R S^+ \to C_\natural$ to a big
  Cohen--Macaulay algebra over $S$ that is an ultraring with respect to $\cW$
  and makes the diagram commute.
\end{proof}

\begingroup
\makeatletter
\renewcommand{\@secnumfont}{\bfseries}
\part{Proof via closure theory}\label{part:closure-elshhms}
\makeatother
\endgroup
\section{Closure operations}\label{sect:closureoperations}
In this section, we review some preliminaries on closure operations.
We first
define full extended plus ($\epf$) closure \cite{Hei01} and weak $\epf$
($\wepf$) closure \cite{Jia21}.
Next,
we state Dietz's axioms for closure operations \cite{Die10} and R.G.'s
algebra axiom \cite{RG18}, and collect references for proofs of these axioms and
for Brian\c{c}on--Skoda-type theorems for various closure operations in Table
\ref{tab:dietzclosures}.
We also state a result of R.G. from \cite{RG18} stating that Dietz
closures satisfying R.G.'s algebra axiom are contained in big Cohen--Macaulay
algebra closures (Proposition \ref{prop:rg41}), which is a key ingredient in our
closure-theoretic proof of Theorems \ref{thm:mainelshhms},
\ref{thm:mainjohnson}, and \ref{thm:mainjohnsonhhty}.
\subsection{Closure operations in mixed characteristic}\label{sect:clmixedchar}
We recall the definition of Heitmann's full extended plus ($\epf$) closure.
\begin{citeddef}[{\citeleft\citen{Hei01}\citemid Definition on pp.\
  804--805\citepunct \citen{RG16}\citemid Definition 7.1\citepunct
  \citen{HM21}\citemid Definition 2.3\citepunct
  \citen{Jia21}\citemid Definition 2.3\citeright}]
  \label{def:epf}
  Let $p > 0$ be a prime number.
  Let $R$ be a domain such that the image of $p$ lies in the Jacobson
  radical of $R$.
  For an inclusion $Q \subseteq M$ of finitely generated $R$-modules,
  the \textsl{full extended plus} ($\epf$) \textsl{closure} of $Q$ in
  $M$ is
  \[
    Q^\epf_M \coloneqq \Set*{u \in M \given \begin{tabular}{@{}c@{}}
      there exists $c \in R - \{0\}$ such that\\
      $c^\varepsilon \otimes u \in \im(R^+ \otimes_R Q \to R^+ \otimes_R M) +
      p^N(R^+ \otimes_R M)$\\
      for every $\varepsilon \in \QQ_{>0}$ and every $N \in \ZZ_{>0}$
    \end{tabular}}.
  \]
\end{citeddef}
We also recall the definition of Jiang's weak $\epf$ ($\wepf$) closure.
\begin{citeddef}[{\cite[Definition 4.1]{Jia21}}]\label{def:wepf}
  Let $p > 0$ be a prime number.
  Let $R$ be a domain such that the image of $p$ lies in the Jacobson
  radical of $R$.
  For an inclusion $Q \subseteq M$ of finitely generated $R$-modules,
  the \textsl{weak $\epf$} ($\wepf$) \textsl{closure} of $Q$ in $M$ is
  \[
    Q^\wepf_M \coloneqq \bigcap_{N = 1}^\infty (Q + p^NM)^\epf_M.
  \]
  By definition, we have the inclusion $Q^\epf_M \subseteq Q^\wepf_M$
  (see \cite[Remark 4.2]{Jia21}).
\end{citeddef}

\subsection{Axioms for closure operations}
In this subsection, we fix the following notation.
\begin{notation}\label{notation:closure}
  We denote by $R$ a ring, and by
  $Q$, $M$, and $W$ arbitrary finitely generated
  $R$-modules such that $Q \subseteq M$.
  We consider an operation $\cl$ sending submodules $Q \subseteq M$ to an
  $R$-module $Q_M^\cl$.
  We denote $I^\cl \coloneqq I^\cl_R$ for ideals $I \subseteq R$.
\end{notation}
We state Dietz's axioms for closure operations from \cite{Die10} with
some conventions from \citeleft\citen{Eps12}\citemid Definition 2.1.1\citepunct
\citen{Die18}\citemid Definition 1.1\citeright.
\begin{citedaxioms}[{\citeleft\citen{Die10}\citemid Axioms 1.1\citeright}]
  \label{axioms:dietzrg}
  Fix notation as in Notation \ref{notation:closure}.
  We say that the operation $\cl$ is a \textsl{closure operation}
  if it satisfies the following three axioms:
  \begin{enumerate}[label=$(\arabic*)$,ref=\arabic*]
    \item\label{axioms:dietz1}
      (Extension) $Q_M^\cl$ is a submodule of $M$ containing $Q$.
    \item (Idempotence) $(Q_M^\cl)^\cl = Q_M^\cl$.
    \item\label{axioms:dietz3}
      (Order-preservation) If $Q \subseteq M \subseteq W$, then $Q_W^\cl
      \subseteq M_W^\cl$.
  \end{enumerate}
  We also consider the following axioms:
  \begin{enumerate}[resume,label=$(\arabic*)$,ref=\arabic*]
    \item (Functoriality) Let $f\colon M \to W$ be a map of $R$-modules.
      Then, $f(Q_M^\cl) \subseteq f(Q)^\cl_W$.
    \item\label{axioms:dietz5}
      (Semi-residuality) If $Q^\cl_M = Q$, then $0^\cl_{M/Q} = 0$.
  \end{enumerate}
  Now suppose that $R$ is a Noetherian local domain $(R,\fm)$.
  We say that $\cl$ is a \textsl{Dietz closure} if it satisfies axioms
  $(\ref{axioms:dietz1})$--$(\ref{axioms:dietz5})$ above, and the following two
  additional axioms:
  \begin{enumerate}[resume,label=$(\arabic*)$,ref=\arabic*]
    \item The maximal ideal $\fm$ and the zero ideal $0$ are $\cl$-closed in
      $R$, i.e., $\fm^\cl = \fm$ and $0^\cl = 0$.
    \item (Generalized colon-capturing) Let $x_1,x_2,\ldots,x_{k+1}$ be a
      partial system of parameters for $R$ and let $J = (x_1,x_2,\ldots,x_k)$.
      Suppose there exists a surjective map $f\colon M \to R/J$ of $R$-modules,
      and let $v \in M$ be an arbitrary element
      such that $f(v) = x_{k+1} + J$.
      Then,
      \[
        (Rv)^\cl_M \cap \ker(f) \subseteq (Jv)^\cl_M.
      \]
  \end{enumerate}
\end{citedaxioms}
To state R.G.'s algebra axiom, we first define the notion of a $\cl$-phantom
extension.
\begin{citeddef}[{\cite[Definition 2.2]{Die10}}]\label{def:clphantom}
  Fix notation as in Notation \ref{notation:closure}.
  Suppose $\cl$ satisfies axioms
  $(\ref{axioms:dietz1})$--$(\ref{axioms:dietz5})$ above.
  Let $M$ be a finitely generated $R$-module and let $\alpha\colon R \to M$ be
  an injective map of $R$-modules.
  Consider the short exact sequence
  \[
    0 \longrightarrow R \overset{\alpha}{\longrightarrow} M \longrightarrow Q
    \longrightarrow 0,
  \]
  and let $\epsilon \in \Ext^1_R(Q,R)$ be the element corresponding to this
  short exact sequence via the Yoneda correspondence.
  \par Fix a projective resolution $P_\bullet$ of $Q$ consisting of finitely
  generated projective $R$-modules $P_i$.
  We say that $\epsilon$ is \textsl{$\cl$-phantom} if
  \[
    \epsilon \in \bigl(\im\bigl(\Hom_R(P_0,R) \longrightarrow
    \Hom_R(P_1,R)\bigr)\bigr)^\cl_{\Hom_R(P_1,R)}.
  \]
  This definition does not depend on the choice of
  $P_\bullet$ by \cite[Discussion 2.3]{Die10}.
  \par We say that $\alpha$ is a \textsl{$\cl$-phantom extension} if $\epsilon$
  is $\cl$-phantom.
\end{citeddef}
We now state R.G.'s algebra axiom.
\begin{citedaxiom}[{\citeleft\citen{RG18}\citemid Axiom 3.1\citeright}]
  Fix notation as in Notation \ref{notation:closure}.
  If $\cl$ satisfies axioms $(\ref{axioms:dietz1})$--$(\ref{axioms:dietz5})$
  above, we consider the following axiom:
  \begin{enumerate}[start=8,label=$(\arabic*)$,ref=\arabic*]
    \item\label{axiom:rgalg}
      (Algebra axiom) Let $\alpha \colon R \to M$ be a map of $R$-modules. 
      If $\alpha$ is an $\cl$-phantom extension, then the map $\alpha'\colon R
      \to \Sym^2_R(M)$ where $1 \mapsto \alpha(1) \otimes \alpha(1)$
      is a $\cl$-phantom extension.
  \end{enumerate}
\end{citedaxiom}
\begin{table}[t]
  \begin{ThreePartTable}
    \begin{TableNotes}
      \item[$\star$]\label{tn:char0weakalg}
        As far as we are aware, it is open whether R.G.'s algebra axiom holds
        for these closure operations.
        However, a suitable replacement that works for our proofs does hold. See
        Remark \ref{rem:weakalgaxiom}.
      \item[$F$-finite]\label{tn:ffinite} Jiang's results hold when $R/\fm$
        is $F$-finite.
    \end{TableNotes}
    {\scriptsize
    \begin{longtable}[c]{ccccc}
      \toprule
      characteristic & closure ($\cl$)
      & Dietz closure & R.G.'s algebra axiom & Brian\c{c}on--Skoda\\
      \cmidrule(lr){1-2} \cmidrule(lr){3-5}
      \multirow{2}{*}{char.\ $p > 0$}
      & tight ($*$)
      & \cite[Ex.\ 5.4]{Die10} & \cite[Prop.\ 3.6]{RG18}
      & \cite[Thm.\ 8.1]{HH94}\\
      & plus ($+$)
      & \cite[Ex.\ 5.1]{Die10} & \cite[Prop.\ 3.11]{RG18}
      & \cite[Thm.\ 7.1]{HH95}\\
      \midrule
      \multirow{3}{*}{equal char.\ $0$} & small equational tight
      (${*\mathsf{eq}}$)
      & \cite[Ex.\ 5.7]{Die10}
      & \textbf{open}\tnotex{tn:char0weakalg}
      & \cite[Thm.\ 4.1.5]{HHchar0}\\
      & big equational tight (${*\mathsf{EQ}}$)
      & \cite[Ex.\ 5.7]{Die10}
      & \textbf{open}\tnotex{tn:char0weakalg}
      & \cite[Thm.\ 4.1.5]{HHchar0}\\
      & $\mathfrak{B}$- ($+$) & \cite[Thm.\ 4.2]{Die10} & \cite[Prop.\
      3.11]{RG18} & \cite[Thm.\ 7.14.3]{AS07}\\
      \midrule
      \multirow{3}{*}{mixed char.}
      & full extended plus ($\epf$) & \textbf{open}
      & \textbf{open}\tnotex{tn:char0weakalg} & \cite[Thm.\ 4.2]{Hei01}\\
      & weak $\epf$ ($\wepf$) &
      \multicolumn{2}{c}{\cite[Thm.\ 4.8]{Jia21}\tnotex{tn:ffinite}}
      & \cite[Thm.\ 4.2]{Hei01}\\
      & full rank 1 ($\ronef$) & \multicolumn{2}{c}{\cite[Thm.\ 4.8 and Rem.\
      4.7]{Jia21}\tnotex{tn:ffinite}} & \cite[Thm.\ 4.2]{Hei01}\\
      \bottomrule
      \insertTableNotes\\
      \caption{Some closure operations on Noetherian complete local domains.}
      \label{tab:dietzclosures}
    \end{longtable}}
  \end{ThreePartTable}
\end{table}
\par We also introduce an axiom asserting that a
closure-theoretic version of the Brian\c{c}on--Skoda theorem holds.
\begin{axiom}
  Fix notation as in Notation \ref{notation:closure}.
  If $\cl$ satisfies axiom $(\ref{axioms:dietz1})$ above, we consider the
  following axiom:
  \begin{enumerate}[start=9,label=$(\arabic*)$,ref=\arabic*]
    \item\label{axiom:brianconskoda}
      (Brian\c{c}on--Skoda-type theorem)
      Let $I \subseteq R$ be an ideal generated by at most $h$ elements.
      Then, $\overline{I^{h+k}} \subseteq (I^{k+1})^\cl$ for every integer $k
      \ge 0$.
  \end{enumerate}
\end{axiom}
\subsection{Algebra closures}
The key result we need about Dietz closures satisfying R.G.'s algebra axiom
$(\ref{axiom:rgalg})$ is
that they are related to algebra closures, which are defined as follows.
\begin{citeddef}[{\cite[Definition 2.3 and Remark 2.4]{RG16}}]
  Let $R$ be a ring, and let $S$ be an $R$-algebra.
  Let $Q \subseteq M$ be an inclusion of finitely generated $R$-modules.
  We then set
  \[
    Q_M^{\cl_S} \coloneqq 
    \Set[\big]{u \in M \given 1 \otimes u \in \im(S \otimes_R Q
    \to S \otimes_R M)}.
  \]
  We call the operation $\cl_S$ an \textsl{algebra closure}.
\end{citeddef}
\begin{examples}
  If $B$ is a big Cohen--Macaulay algebra over a Noetherian local domain, then
  the algebra closure $\cl_B$ is a Dietz closure \cite[Theorem 4.2]{Die10} and
  satisfies R.G.'s algebra axiom $(\ref{axiom:rgalg})$
  \cite[Proposition 3.11]{RG18}.
  We list two examples of such algebra closures appearing in
  Table \ref{tab:dietzclosures} for which a
  Brian\c{c}on--Skoda-type theorem $(\ref{axiom:brianconskoda})$ holds.
  \begin{enumerate}[label=$(\roman*)$,ref=\roman*]
    \item Let $R$ be an excellent biequidimensional local domain of equal
      characteristic $p > 0$, or a Noetherian local domain of equal
      characteristic $p > 0$ that is a homomorphic image of a Gorenstein local
      ring.
      Then, $R^+$ is a big Cohen--Macaulay algebra over $R$ by
      \citeleft\citen{HH92}\citemid Main Theorem 5.15\citepunct
      \citen{HL07}\citemid Corollary 2.3$(b)$\citeright.
      The associated algebra closure $\cl_{R^+}$ is called \textsl{plus
      closure} \cite[Definition 2.13]{Smi94}, and
      is denoted by $+$.
    \item Let $R$ be a Noetherian local ring of equal characteristic zero.
      Using ultraproducts, Aschenbrenner and Schoutens construct a big
      Cohen--Macaulay algebra $\mathfrak{B}(R)$ over $R$ \cite[(7.7)]{AS07}.
      The associated algebra closure $\cl_{\mathfrak{B}(R)}$ is called
      \textsl{$\mathfrak{B}$-closure} \cite[(7.13)]{AS07}, and is also denoted
      by $+$.
  \end{enumerate}
\end{examples}
We now state the following result due to R.G., which says that Dietz
closures satisfying R.G.'s algebra axiom $(\ref{axiom:rgalg})$
are contained in an algebra
closure defined by a big Cohen--Macaulay algebra.
\begin{citedprop}[{\cite[Proposition 4.1]{RG18}}]\label{prop:rg41}
  Let $R$ be a Noetherian local domain and let $\cl$ be a Dietz closure on $R$
  that satisfies R.G.'s algebra axiom $(\ref{axiom:rgalg})$.
  Then, there exists a big Cohen--Macaulay algebra $B$ over $R$ such that
  $Q^\cl_M \subseteq Q^{\cl_B}_M$
  for every inclusion $Q \subseteq M$ of finitely generated $R$-modules.
\end{citedprop}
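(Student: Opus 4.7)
The plan is to build $B$ as a transfinite direct limit of ``algebra modifications'' of $R$, upgrading Hochster's and Dietz's module-theoretic modification construction to the category of $R$-algebras by means of R.G.'s algebra axiom~$(\ref{axiom:rgalg})$.

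First, I would well-order all data of two kinds: (a) a partial system of parameters $(x_1,\dots,x_{k+1})$ of $R$ together with a potential bad relation $rx_{k+1}\in(x_1,\dots,x_k)$ with $r\notin(x_1,\dots,x_k)$ as witnessed in some intermediate algebra; and (b) a triple $(Q\subseteq M,u)$ of finitely generated $R$-modules with $u\in Q^{\cl}_M$. Starting from $S_0\coloneqq R$, I would construct a transfinite chain of $R$-algebras $\{S_\alpha\}$ where, at each successor, the next piece of data is resolved: for a type~(a) defect, an algebra modification forces the bad relation to become good in $S_{\alpha+1}$; for a type~(b) defect, an algebra modification forces $1\otimes u$ to lie in $\im(S_{\alpha+1}\otimes_R Q\to S_{\alpha+1}\otimes_R M)$. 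Limit stages are direct limits, and the process is iterated long enough that every defect in the resulting algebra has been addressed.

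The main obstacle, and the whole reason for assuming a \emph{Dietz} closure satisfying the \emph{algebra} axiom, is to show that $R\to S_\alpha$ remains a $\cl$-phantom extension at every stage, so that in particular $\fm S_\alpha\ne S_\alpha$ and the construction does not collapse to the zero ring. For type~(a) modifications this is exactly where Dietz's generalized colon-capturing axiom and the $\cl$-closedness of $\fm$ enter, as in Dietz's construction of big Cohen--Macaulay modules in \cite{Die10}. For type~(b) modifications, R.G.'s algebra axiom $(\ref{axiom:rgalg})$ is what permits the modification to be carried out in $R$-algebras rather than merely modules: iterating the axiom, if $R\to M$ is $\cl$-phantom then so is $R\to\Sym^n_R(M)$ for every $n\ge 1$, and the needed modification of $S_\alpha$ can be realized as a quotient of a symmetric algebra attached to an appropriate $\cl$-phantom extension without destroying phantomness.

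Setting $B\coloneqq\varinjlim_\alpha S_\alpha$, the composite $R\to B$ is a direct limit of $\cl$-phantom extensions, so $\fm B\ne B$. The resolved type~(a) defects then imply that every system of parameters of $R$ is a regular sequence on $B$, so $B$ is a balanced big Cohen--Macaulay $R$-algebra. The resolved type~(b) defects give, for every inclusion $Q\subseteq M$ of finitely generated $R$-modules and every $u\in Q^{\cl}_M$, that $1\otimes u\in\im(B\otimes_R Q\to B\otimes_R M)$, i.e., $u\in Q^{\cl_B}_M$, yielding the required inclusion $Q^{\cl}_M\subseteq Q^{\cl_B}_M$.
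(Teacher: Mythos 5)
Your high-level plan — a transfinite chain of algebra modifications resolving both bad-relation defects and uncaptured-closure-element defects, with $\cl$-phantomness maintained at each stage and $B$ the direct limit — is the strategy R.G.\ uses in \cite{RG18}. But the step where you invoke the Algebra Axiom is too glib, in a way that conceals the actual technical content. You assert that ``iterating the axiom, if $R\to M$ is $\cl$-phantom then so is $R\to\Sym^n_R(M)$ for every $n\ge 1$.'' Iterating $(\ref{axiom:rgalg})$ only gives $\cl$-phantomness of $R\to\Sym^2(\Sym^2(\cdots\Sym^2(M)\cdots))$; although there are natural surjections from these iterated modules onto $\Sym^{2^k}(M)$ (and no map at all to $\Sym^n(M)$ for $n$ not a power of $2$), $\cl$-phantomness does not obviously descend along a surjection of the target module, since the induced map on $\Ext^1$ of cokernels goes the wrong way and the class one wants controlled is the one upstairs, not downstairs. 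Establishing that the relevant (truncated) symmetric algebras yield $\cl$-phantom extensions of $R$ is precisely the substance of R.G.'s argument, and it cannot be reduced to bare iteration.

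A secondary point: the division of labor you propose — Dietz's axioms for type~(a) defects, the Algebra Axiom for type~(b) — is misdirected. The Algebra Axiom is what allows \emph{any} modification to be carried out in the category of $R$-algebras (as a polynomial quotient) rather than merely in modules, and this is needed for type~(a) modifications just as much as for type~(b); otherwise the direct limit of the $S_\alpha$ would have no reason to be an $R$-algebra. Dietz's generalized colon-capturing axiom is what makes the underlying \emph{module-level} modification for a type~(a) defect $\cl$-phantom, and the Algebra Axiom then upgrades it to an algebra modification; an analogous module-level phantomness statement for a type~(b) defect (coming from $u\in Q^\cl_M$, equivalently $\bar u\in 0^\cl_{M/Q}$ via functoriality) also needs to be established before the Algebra Axiom can do its work there.
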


\section{Proof of main theorems via closure theory}\label{sect:proofs}
We are now ready to prove 
Theorems \ref{thm:mainelshhms}, \ref{thm:mainjohnson}, and
\ref{thm:mainjohnsonhhty} using closure operations.
Theorem \ref{thm:mainelshhms} follows from
Theorem \ref{thm:mainjohnson} by setting $s_i = 0$ for all $i$.
It therefore suffices to show Theorems \ref{thm:mainjohnson} and
\ref{thm:mainjohnsonhhty}.\medskip
\par We start with the following result.
\begin{proposition}\label{prop:elshhmsmd}
  Let $(R,\fm)$ be a complete local domain.
  Consider an ideal $I \subseteq R$.
  Let $u \in I^{(M)}$ for an integer $M > 0$, and fix non-negative integers
  $s_1,s_2,\ldots,s_n$.
  \begin{enumerate}[label=$(\roman*)$,ref=\roman*]
    \item\label{prop:elshhmsmdrprime}
      Denote by $R'$ the integral closure of
      \[
        R[u^{1/M}] \subseteq \Frac(R)[u^{1/M}]
      \]
      in $\Frac(R)[u^{1/M}]$,
      where $u^{1/M}$ is a choice of $M$-th root of $u$ in an algebraic closure
      of $\Frac(R)$.
      Then, $R'$ is a complete normal local domain $(R',\fm')$.
      Moreover, if $R/\fm$ is $F$-finite and of characteristic $p > 0$, then
      $R'/\fm'$ is $F$-finite and of characteristic $p > 0$.
  \end{enumerate}
  Now assume that the localizations of $R$ at
  the associated primes of $R/I$ have infinite residue fields.
  Let $h$ be the largest analytic spread of $IR_\fp$, where $\fp$ ranges
  over all associated primes $\fp$ of $R/I$.
  \begin{enumerate}[resume,label=$(\roman*)$,ref=\roman*]
    \item\label{prop:elshhmsmdepf}
      Fix a closure operation $\cl$ on $R$ for which the
      Brian\c{c}on--Skoda-type theorem $(\ref{axiom:brianconskoda})$ holds.
      Let $\fp_\ell \in \Ass_R(R/I)$.
      Then, there exists $c_\ell \in R - \fp_\ell$ such that for every $i$, we
      have
      \[
        c_\ell u^{\frac{s_i+h}{M}} \in (I^{s_i+1}R')^\cl.
      \]
    \item\label{prop:elshhmsmdbcm}
      Suppose that if $R$ is of mixed characteristic, then $R/\fm$ is
      $F$-finite and of characteristic $p > 0$.
      Then, there exists a big Cohen--Macaulay algebra $B$ over $R'$
      such that the following holds:
      For every $\fp_\ell \in \Ass_R(R/I)$, there exists $c_\ell \in R -
      \fp_\ell$ such that for every $i$, we have
      \[
        c_\ell u^{\frac{s_i+h}{M}} \in I^{s_i+1}B.
      \]
      Moreover, if $R$ is regular, then for every $i$, we have
      \[
        u^{\frac{s_i+h}{M}} \in I^{(s_i+1)}B.
      \]
  \end{enumerate}
\end{proposition}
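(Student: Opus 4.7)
The plan for part $(\ref{prop:elshhmsmdrprime})$ is essentially formal. Since $u^{1/M}$ satisfies the monic polynomial $x^M - u$, the ring $R[u^{1/M}]$ is module-finite over $R$. Because complete Noetherian local rings are Nagata, the integral closure $R'$ in the finite field extension $\Frac(R)[u^{1/M}]$ is again module-finite over $R$, hence complete semilocal. A complete semilocal Noetherian ring decomposes as a product of its localizations at maximal ideals; since $R'$ is a domain this product has a single factor, forcing $R'$ to be local. Normality holds by construction. If $R/\fm$ is of characteristic $p$ and is $F$-finite, then $R'/\fm'$ is a finite field extension of $R/\fm$, so it is still of characteristic $p$ and still $F$-finite.

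For part $(\ref{prop:elshhmsmdepf})$ the idea is to pass to a minimal reduction of $I$ at $\fp_\ell$ and then invoke the Brian\c{c}on--Skoda axiom. Since $R_{\fp_\ell}$ has infinite residue field and $IR_{\fp_\ell}$ has analytic spread at most $h$, we may lift a minimal reduction of $IR_{\fp_\ell}$ to an $h$-generated ideal $J \subseteq I$ of $R$ with $JR_{\fp_\ell}$ a reduction of $IR_{\fp_\ell}$; in particular $IR_{\fp_\ell} \subseteq \overline{JR_{\fp_\ell}}$. From $u \in I^{(M)}$ we obtain $u \in I^M R_{\fp_\ell} \subseteq \overline{J^M R_{\fp_\ell}}$, and clearing denominators in an integral equation for $u$ over $J^M R_{\fp_\ell}$ produces $c \in R - \fp_\ell$ with $cu \in \overline{J^M R}$. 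Set $c_\ell = c^L$ for $L = \max_i \lceil (s_i+h)/M \rceil$. Then
\[
  \bigl(c_\ell\, u^{(s_i+h)/M}\bigr)^M = c_\ell^M u^{s_i+h} = c^{LM - (s_i+h)} (cu)^{s_i+h} \in \overline{J^{M(s_i+h)} R'}
\]
for every $i$, using $(cu)^{s_i+h} \in \bigl(\overline{J^M R'}\bigr)^{s_i+h} \subseteq \overline{J^{M(s_i+h)} R'}$. Because $R'$ is a normal domain, the valuative criterion for integral closure allows us to extract $M$-th roots, so $c_\ell u^{(s_i+h)/M} \in \overline{J^{s_i+h} R'}$. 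The Brian\c{c}on--Skoda axiom $(\ref{axiom:brianconskoda})$ applied on $R'$ to the $h$-generated ideal $JR'$ with $k = s_i$ then gives $\overline{(JR')^{s_i+h}} \subseteq ((JR')^{s_i+1})^\cl \subseteq (I^{s_i+1} R')^\cl$ via $J \subseteq I$ and order-preservation.

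For part $(\ref{prop:elshhmsmdbcm})$, apply $(\ref{prop:elshhmsmdepf})$ with an appropriate closure on $R'$ (for instance $\wepf$ in mixed characteristic, whose axioms on $R'$ are available thanks to part $(\ref{prop:elshhmsmdrprime})$ which ensures $R'/\fm'$ is $F$-finite). Each such closure is a Dietz closure on $R'$ satisfying R.G.'s algebra axiom, so Proposition \ref{prop:rg41} supplies a big Cohen--Macaulay $R'$-algebra $B$ with $(I^{s_i+1} R')^\cl \subseteq I^{s_i+1} B \cap R'$, whence $c_\ell u^{(s_i+h)/M} \in I^{s_i+1} B$. For the ``Moreover'' clause, a system of parameters of the regular local ring $R$ remains a system of parameters of the module-finite domain $R'$, so $B$ is also big Cohen--Macaulay over $R$; regularity of $R$ then forces $R \to B$ to have finite flat dimension and hence to be faithfully flat. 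Each $c_\ell \notin \fp_\ell$ therefore becomes a unit in $B_{\fp_\ell}$, giving $u^{(s_i+h)/M} \in I^{s_i+1} B_{\fp_\ell} \cap B$ for every $\fp_\ell \in \Ass_R(R/I)$. Since flatness commutes with the finite intersection defining $I^{(s_i+1)}$, we conclude
\[
  u^{(s_i+h)/M} \in \bigcap_{\fp_\ell} \bigl(I^{s_i+1} B_{\fp_\ell} \cap B\bigr) = I^{(s_i+1)} B.
\]

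I expect the main obstacle to be the bookkeeping in $(\ref{prop:elshhmsmdepf})$: cleanly descending the integral equation for $u$ over $J^M R_{\fp_\ell}$ to an honest integral dependence of $cu$ over $J^M R$, and then choosing the single exponent $L$ so that one constant $c_\ell = c^L$ serves uniformly for every $i$. Everything else reduces either to formal axioms of closure operations (order-preservation and $(\ref{axiom:brianconskoda})$), to Proposition \ref{prop:rg41}, or to standard facts about complete Henselian local rings and faithfully flat maps.
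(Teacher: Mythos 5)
Your proof is correct and tracks the paper's argument closely, with two minor organizational variations. In part $(\ref{prop:elshhmsmdepf})$ you pass to the $h$-generated reduction $J$ first, producing $cu \in \overline{J^M R}$ by clearing denominators and then extracting $M$-th roots inside $\overline{J^{M(s_i+h)}R'}$; the paper instead produces $x_\ell$ with $x_\ell u \in I^M$, deduces $x_\ell^{s_i+h}u^{(s_i+h)/M} \in \overline{I^{s_i+h}R'}$, and only afterward switches to the reduction by comparing integral closures over $R'_{\fp_\ell}$. Both are valid; yours consolidates the multiplier into a single power $c^L$, while the paper builds it as $x_\ell^{s+h}\prod_i y_{\ell,i}$. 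In the ``Moreover'' clause you localize at each $\fp_\ell$ and use that flat base change commutes with the finite intersection defining $I^{(s_i+1)}$, whereas the paper constructs a single $c \notin \bigcup_\ell \fp_\ell$ by prime avoidance (exactly as in Lemma \ref{lem:symbolicdefs}) and then argues that $c$ is a nonzerodivisor on $B/I^{(s_i+1)}B$. Both routes hinge on $R \to B$ being faithfully flat.

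There is one genuine misstep you should fix: the claim that regularity of $R$ forces $R \to B$ ``to have finite flat dimension and hence to be faithfully flat.'' Finite flat dimension does not by itself imply flatness, so as written this is not a valid inference. The correct statement --- which the paper cites as \cite[Lemma 5.5]{Hoc75queens} --- is that a balanced big Cohen--Macaulay algebra over a regular local ring is automatically faithfully flat; the standard proof tensors the Koszul complex on a regular system of parameters with $B$ to show the higher Tor modules against $R/\fm$ vanish, then uses regularity of $R$ (finite free resolutions of all finitely generated modules) to propagate that vanishing. Replace the finite-flat-dimension sentence with a direct appeal to that result.
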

\begin{proof}
  We first prove $(\ref{prop:elshhmsmdrprime})$.
  The ring $R'$ is module-finite over $R$ and is complete local by
  \cite[Chapitre 0, Th\'eor\`eme 23.1.5 and Corollaire 23.1.6]{EGAIV1}.
  If $R/\fm$ is $F$-finite and of characteristic $p > 0$, then $R'/\fm'$ is
  $F$-finite and of characteristic $p > 0$
  because the field extension $R/\fm \hookrightarrow R'/\fm'$ is a finite field
  extension.\smallskip
  \par Next, we prove $(\ref{prop:elshhmsmdepf})$.
  Fix $\fp_\ell \in \Ass_R(R/I)$.
  Since the residue field of $R_{\fp_\ell}$ is infinite, there exists an ideal
  $J_\ell \subseteq I$ with at most $h$ generators such that $J_\ell R_{\fp_\ell}$ is a
  reduction of $IR_{\fp_\ell}$ by \cite[Proposition 8.3.7]{SH06}.
  By \cite[Lemma 8.1.3(1)]{SH06}, we also know that $J_\ell R'_{\fp_\ell}$ is a
  reduction of $I_\ell R'_{\fp_\ell}$, and
  by \cite[Proposition 8.1.5]{SH06}, we know that
  $J^{s_i+h}_\ell R_{\fp_\ell}'$ is a reduction of $I^{s_i+h}R'_{\fp_\ell}$ for all
  $i$.
  \par Since $u \in I^{(M)}$, there exists an element $x_\ell \in R - \fp_\ell$
  such that $x_\ell u \in I^M$.
  We then have
  \begin{align*}
    \bigl(x_\ell^{s_i+h}u^{\frac{s_i+h}{M}}\bigr)^{M} =
    x_\ell^{M(s_i+h)}u^{s_i+h}
    &= (x_\ell^M u)^{s_i+h}
    \in (I^{s_i+h}R')^{M},
  \intertext{and hence}
    x_\ell^{s_i+h}u^{\frac{s_i+h}{M}} &\in
    \overline{I^{s_i+h}R'}.
  \intertext{Since $\overline{J^{s_i+h}_\ell R_{\fp_\ell}'} =
  \overline{I^{s_i+h}R'_{\fp_\ell}}$, there exists an element $y_{\ell,i}
  \in R-
  \fp_\ell$ such that}
  y_{\ell,i} x_\ell^{s_i+h}u^{\frac{s_i+h}{M}} &\in
    \overline{J^{s_i+h}_\ell R'}.
  \end{align*}
  By the Brian\c{c}on--Skoda-type theorem $(\ref{axiom:brianconskoda})$,
  we have
  \[
    y_{\ell,i} x_\ell^{s_i+h}u^{\frac{s_i+h}{M}} \in
    \bigl(J^{s_i+1}_\ell R'\bigr)^\cl \subseteq (I^{s_i+1}R')^\cl,
  \]
  where the inclusion on the right holds by the order-preservation axiom
  $(\ref{axioms:dietz3})$.
  Setting
  \[
    y_\ell = y_{\ell,1}y_{\ell,2}\cdots y_{\ell,n},
  \]
  we therefore have
  \[
    y_\ell x_\ell^{s+h}u^{\frac{s_i+h}{M}} \in (I^{s_i+1}R')^\cl
  \]
  for all $i$.
  Setting $c_\ell = y_\ell x_\ell^{s+h}$, we obtain
  $(\ref{prop:elshhmsmdepf})$.\smallskip
  \par Finally, we show $(\ref{prop:elshhmsmdbcm})$.
  Fix a Dietz closure $\cl$ on $R$ satisfying R.G.'s algebra axiom and the
  Brian\c{c}on--Skoda-type theorem $(\ref{axiom:brianconskoda})$.
  Note that such a closure operation exists in all characteristics by Table
  \ref{tab:dietzclosures}.
  By $(\ref{prop:elshhmsmdepf})$, there exist $c_\ell \in R - \fp_\ell$ such
  that
  \begin{align*}
    c_\ell u^{\frac{s_i+h}{M}} &\in (I^{s_i+1}R')^\cl
    \intertext{for all $i$.
    By R.G.'s result stating that Dietz closures satisfying R.G.'s algebra
    axiom are contained in a big Cohen--Macaulay algebra closure
    (Proposition \ref{prop:rg41}),
    there exists a big Cohen--Macaulay algebra $B$
    over $R'$ such that}
    c_\ell u^{\frac{s_i+h}{M}} &\in I^{s_i+1}B.
  \end{align*}
  This proves the first statement in $(\ref{prop:elshhmsmdbcm})$.
  \par For the second statement in $(\ref{prop:elshhmsmdbcm})$,
  for each $\ell$ such that $\fp_\ell$ is maximal in $\Ass_R(R/I)$ with respect
  to inclusion, choose $d_\ell \in \fp_\ell - \bigl(\bigcup_{j \ne \ell}
  \fp_j \bigr)$, which is possible by prime avoidance.
  As in the proof of Lemma \ref{lem:symbolicdefs}, we have
  \[
    c = \sum_{\ell} \biggl(c_\ell \prod_{j \ne \ell}d_j\biggr) \notin
    \bigcup_\ell \fp_\ell.
  \]
  We also have
  \[
    cu^{\frac{s_i+h}{M}} \in I^{s_i+1}B \subseteq I^{(s_i+1)}B.
  \]
  We now note that $B$ is a big Cohen--Macaulay algebra over $R$, since
  every system of parameters in $R$ maps to a system of parameters in $R'$.
  Since $R$ is regular, $R \to B$ is therefore faithfully flat by
  \cite[Lemma 5.5]{Hoc75queens} (see also \citeleft\citen{HH92}\citemid
  (6.7)\citepunct \citen{HH95}\citemid Lemma 2.1$(d)$\citeright).
  Thus, since $c$ is a nonzerodivisor on $R/I^{(s_i+1)}$ by \cite[Theorem
  6.1$(ii)$]{Mat89}, it is also a
  nonzerodivisor on $B/I^{(s_i+1)}B$.
  We therefore see that for every $i$, we have
  \[
    u^{\frac{s_i+h}{M}} \in I^{(s_i+1)}B.\qedhere
  \]
\end{proof}
\begin{remark}\label{rem:weakalgaxiom}
  The proof of Proposition \ref{prop:elshhmsmd}$(\ref{prop:elshhmsmdbcm})$ in
  fact applies to any closure operation satisfying a Brian\c{c}on--Skoda-type
  theorem $(\ref{axiom:brianconskoda})$ and for which the
  following property holds for all inclusions $Q\subseteq M$ of finitely
  generated $R$-modules:
  \begin{enumerate}[label=$(\star)$,ref=\star]
    \item\label{axiom:weakalg}
      If $u \in Q^\cl_M$, then there exists a big Cohen--Macaulay
      algebra $B$ over $R$ such that $1 \otimes u \in \im(B \otimes_R Q \to
      B \otimes_R M)$.
  \end{enumerate}
  The property $(\ref{axiom:weakalg})$
  holds for all Dietz closures satisfying R.G.'s algebra axiom
  $(\ref{axiom:rgalg})$ by 
  R.G.'s result stating that Dietz closures satisfying R.G.'s algebra
  axiom are contained in a big Cohen--Macaulay algebra closure
  (Proposition \ref{prop:rg41}).
  However, $(\ref{axiom:weakalg})$ holds for the other closure
  operations listed in Table \ref{tab:dietzclosures} as well.
  \begin{enumerate}[label=$(\roman*)$]
    \item The property $(\ref{axiom:weakalg})$ holds for big (and hence also
      small) equational tight closure on all
      Noetherian rings of equal characteristic zero \cite[Theorem 11.4]{Hoc94}.
      See \cite[Definition 3.4.3$(b)$ and (4.6.1)]{HHchar0} for definitions of
      these closure operations.
    \item The property $(\ref{axiom:weakalg})$ holds for $\epf$ closure for
      complete local domains of mixed characteristic with $F$-finite residue
      field.
      This follows from the fact that $\wepf$ closure is a Dietz closure
      satsifying R.G.'s algebra axiom $(\ref{axiom:rgalg})$
      \cite[Theorem 4.8]{Jia21} since $Q_M^\epf \subseteq Q_M^\wepf$.
    \item A stronger version of $(\ref{axiom:weakalg})$ holds for tight
      closure on all analytically irreducible excellent local domains of
      equal characteristic $p > 0$ \cite[Theorem 11.1]{Hoc94}.
  \end{enumerate}
  We can therefore use these closure operations and
  property $(\ref{axiom:weakalg})$ instead of Proposition
  \ref{prop:rg41} to prove Theorems \ref{thm:mainjohnson} and
  \ref{thm:mainjohnsonhhty}
  below.
\end{remark}
Finally, we can prove Theorems \ref{thm:mainjohnson} and
\ref{thm:mainjohnsonhhty} using closure operations.
\begin{proof}[Proof of Theorems \ref{thm:mainjohnson} and
\ref{thm:mainjohnsonhhty} via closure theory]
  By Lemma \ref{lem:hh02reductions}, we may assume that $R$ is a complete regular local
  ring $(R,\fm)$ with a perfect residue field,
  and that the localizations of $R$ at the associated primes of $R/I$ have
  infinite residue fields.\smallskip
  \par We start with Theorem \ref{thm:mainjohnson}.
  Setting $M = s+nh$ in 
  Proposition \ref{prop:elshhmsmd}$(\ref{prop:elshhmsmdbcm})$, for $u \in
  I^{(s+nh)}$, we have
  \[
    u^{\frac{s_i+h}{s+nh}} \in I^{(s_i+1)}B
  \]
  for some big Cohen--Macaulay algebra $B$ over $R$.
  Multiplying together these inclusions for every $i$, we have
  \[
    u \in \prod_{i=1}^n I^{(s_i+1)}B.
  \]
  Since $R$ is regular, $R \to B$ is faithfully flat by
  \cite[Lemma 5.5]{Hoc75queens} (see also \citeleft\citen{HH92}\citemid
  (6.7)\citepunct \citen{HH95}\citemid Lemma 2.1$(d)$\citeright), and hence we have
  \[
    u \in \prod_{i=1}^n I^{(s_i+1)}.\smallskip
  \]
  \par We now prove Theorem \ref{thm:mainjohnsonhhty}.
  Setting $M = s+nh+1$ in 
  Proposition \ref{prop:elshhmsmd}$(\ref{prop:elshhmsmdbcm})$, for $u \in
  I^{(s+nh+1)}$, we have
  \[
    u^{\frac{s_i+h}{s+nh+1}} \in I^{(s_i+1)}B
  \]
  for some big Cohen--Macaulay algebra $B$ over $R$.
  Multiplying together these inclusions for every $i$, we have
  \[
    u^{\frac{s+nh}{s+nh+1}} \in \prod_{i=1}^n I^{(s_i+1)}B.
  \]
  \par Let $l$ be the largest integer for which the composition
  \begin{equation}\label{eq:whatsplits}
    R \longrightarrow B \xrightarrow{u^{\frac{l}{s+nh+1}} \cdot -} B
  \end{equation}
  splits as a map of $R$-modules.
  We claim such an $l$ exists.
  When $l = 0$, the map $R \to B$ is faithfully
  flat, hence pure \cite[Theorem 7.5$(i)$]{Mat89}.
  Thus, the map $R \to B$ splits by an argument of Auslander \cite[p.\
  59]{HH90}.
  \par We now have
  \[
    u^{\frac{l}{s+nh+1}}u =
    u^{\frac{l+1}{s+nh+1}}u^{\frac{s+nh}{s+nh+1}} \in u^{\frac{l+1}{s+nh+1}}
    \cdot \prod_{i=1}^n I^{(s_i+1)}B.
  \]
  Applying $\varphi \in \Hom_R(B,R)$, we have
  \[
    \varphi\Bigl(u^{\frac{l}{s+nh+1}}\Bigr) \cdot u =
    \varphi\Bigl(u^{\frac{l+1}{s+nh+1}}u^{\frac{s+nh}{s+nh+1}}\Bigr)
    \in
    \varphi\Bigl(u^{\frac{l+1}{s+nh+1}}\Bigr) \cdot \prod_{i=1}^n
    I^{(s_i+1)}.
  \]
  Setting $\varphi$ to be a splitting of the map \eqref{eq:whatsplits}, we
  obtain
  \[
    u \in \fm \cdot \prod_{i=1}^n I^{(s_i+1)}
  \]
  as claimed, since $\varphi(u^{\frac{l+1}{s+nh+1}}) \in \fm$ for all
  $\varphi \in \Hom_R(B,R)$ by the assumption on $l$.
\end{proof}

\begingroup
\makeatletter
\renewcommand{\@secnumfont}{\bfseries}
\part{Proof via multiplier/test ideals}\label{part:elshhms}
\makeatother
\endgroup
\section{Big Cohen--Macaulay test ideals, +-test ideals, and multiplier ideals}\label{sect:hls}
In this section, we
review the
theory of big Cohen--Macaulay test ideals developed by Ma and Schwede \cite{MS}
and the theory of $+$-test ideals developed by Hacon, Lamarche, and Schwede
\cite{HLS}.
We also define multiplier ideals for excellent schemes of equal
characteristic zero, following \cite{dFM09,ST}.\medskip
\par When working with big Cohen--Macaulay test ideals throughout Part
\ref{part:elshhms} of this paper,
we will often use the following notation.
\begin{notation}[see {\cite[Setting 6.1]{MS}}]\label{notation:ms21like}
  Let $(R,\fm)$ be a normal complete Noetherian local ring of dimension $d$.
  Fix a dualizing complex $\omega_R^\bullet$ on $R$ with associated canonical
  module $\omega_R$, and fix an embedding $R \subseteq \omega_R \subseteq K(R)$.
  This yields a choice of an effective canonical divisor $K_R$ on $\Spec(R)$.
  \par Let $\Delta$ be an effective $\QQ$-Weil divisor on $\Spec(R)$ such that
  $K_R+\Delta$ is $\QQ$-Cartier.
  Since $K_R+\Delta$ is effective, there is an integer $N > 0$ and an element $h
  \in R$ such that
  \[
    K_R+\Delta = \frac{1}{N} \prdiv_R(h).
  \]
\end{notation}
\subsection{Big Cohen--Macaulay test ideals of divisor pairs}
We define the big Cohen--Macaulay test ideals from \cite{MS}.
See also \cite[Definition 3.1]{PRG} for a related definition, which applies when
$\Delta = 0$ but does not assume that $K_R$ is $\QQ$-Cartier.
\begin{citeddef}[{\cite[Definitions 6.2 and 6.9]{MS}}]\label{def:ms21def}
  With notation as in Notation \ref{notation:ms21like},
  let $B$ be an $R^+$-algebra that is big Cohen--Macaulay over $R$.
  We set
  \[
    0^{B,K_R+\Delta}_{H^d_\fm(R)} \coloneqq \ker\Bigl( H^d_\fm(R)
    \xrightarrow{h^{1/N}} H^d_\fm(B) \Bigr),
  \]
  which does not depend on the choice of $h^{1/N}$.
  The \textsl{big Cohen--Macaulay test ideal of $(R,\Delta)$ with respect to
  $B$} is
  \[
    \tau_B\bigl(R,\Delta\bigr) \coloneqq
    \Ann_{\omega_R}\Bigl(0^{B,K_R+\Delta}_{H^d_\fm(R)}\Bigr).
  \]
  This is an $R$-submodule of $\omega_R$, and is in fact an
  ideal in $R$ by \cite[Lemma 6.8]{MS}.
\end{citeddef}
\subsection{+-test ideals}
Next, we define Hacon, Lamarche, and Schwede's notion of $+$-test
ideals \cite{HLS}.
\subsubsection{$+$-stable global sections}
To define $+$-test ideals,
we first need to define Hacon, Lamarche, and Schwede's version of the space of
$+$-stable global sections first introduced in \cite[Definition 3.10]{TY} and
\cite[Definition 4.2]{BMPSTWW}.
\begin{citeddef}[{\cite[Notation 3.1 and Definition 3.2]{HLS}}]\label{def:b0hls}
  Let $(R,\fm)$ be a complete Noetherian local ring of residue characteristic $p
  > 0$, and consider a proper morphism $\pi\colon X \to \Spec(R)$ from a normal
  integral scheme of dimension $d$.
  Fixing a dualizing complex $\omega_R^\bullet$ on $R$, we have an induced
  dualizing complex $\omega_X^\bullet = \pi^!\omega_R^\bullet$ on $X$ with
  associated canonical sheaf $\omega_X$.
  We can then fix a canonical divisor $K_X$ on $X$, which satisfies $\omega_X =
  \cO_X(K_X)$.
  \par Fix an algebraic closure $\overline{K(X)}$ of the function field $K(X)$
  of $X$, and consider the absolute integral closure $\nu\colon X^+ \to X$.
  For a $\QQ$-Weil divisor $B$ on $X$, we set
  \[
    \nu_*\cO_{X^+}(\nu^*B) \coloneqq \varinjlim_{f \colon Y \to X} f_*
    \cO_Y\bigl(\lfloor f^*B \rfloor\bigr),
  \]
  where the colimit is taken over finite surjective morphisms $f\colon Y \to X$
  as in Definition \ref{def:xplus}.
  This yields a sheaf $\cO_{X^+}(\nu^*B)$.
  \par Now consider a Weil divisor $M$ and an effective $\QQ$-Weil divisor $B$
  on $X$.
  The Matlis dual of the $R$-module
  \[
    \im\biggl(H^d_\fm\Bigl( \RR\Gamma\bigl(X,\cO_X(-M)\bigr)\Bigr)
    \longrightarrow H^d_\fm\Bigl( \RR\Gamma\bigl(X^+,\cO_{X^+}(-\nu^*M + 
    \nu^*B)\bigr)\Bigr)\biggr)
  \]
  yields an $R$-submodule
  \[
    \BB^0\bigl(X,B,\cO_X(K_X+M)\bigr) \subseteq H^0\bigl(X,\cO_X(K_X+M)\bigr)
  \]
  by Lipman's local-global duality \cite[Theorem on p.\ 188]{Lip78} (see
  \cite[Lemma 2.2]{BMPSTWW}).
  This submodule $\BB^0(X,B,\cO_X(K_X+M))$ is
  called the module of \textsl{$+$-stable global sections}.
\end{citeddef}
\subsubsection{$+$-test ideals for divisor pairs}
We can now define $+$-test ideals for pairs
$(X,\Delta)$, where $\Delta$ is a $\QQ$-Weil divisor, following \cite{HLS}.
\begin{citeddef}[{\cite[Definition 4.3, Notation 4.6, Definition 4.14, and
  Definition 4.15]{HLS}}]
  With notation as in Definition \ref{def:b0hls},
  fix an effective $\QQ$-Weil divisor $\Delta$ on $X$ and
  a Cartier divisor $H$ on $X$ such that $K_X+\Delta+H
  \ge 0$.
  The \textsl{$+$-test ideal of the pair $(X,\Delta)$} is the coherent subsheaf
  \[
    \tau_+\bigl(\cO_X,\Delta\bigr) \subseteq \omega_X \otimes \cO_X(H)
  \]
  such that
  $\tau_+(\cO_X,\Delta) \otimes \cO_X(-H) \otimes \sL \subseteq \omega_X \otimes
  \sL$ is globally generated by
  \[
    \BB^0(X,K_X+\Delta+H,\omega_X \otimes \sL) \subseteq H^0(X,\omega_X
    \otimes \sL),
  \]
  where $\sL = \cO_X(L)$ is a sufficiently ample invertible sheaf.
  The subsheaf $\tau_+(\cO_X,\Delta)$ is independent of the choice of
  $\sL$ and $H$ by \cite[Proposition 4.5 and Lemma 4.8$(b)$]{HLS}, and is an
  ideal sheaf by \cite[Lemma 4.18]{HLS}.
\end{citeddef}
Big Cohen--Macaulay test ideals and $+$-test ideals are related in the following
manner.
\begin{remark}\label{rem:hls512}
  With notation as in Notation \ref{notation:ms21like}, assume that $R$ is of
  residue characteristic $p > 0$, and set $X = \Spec(R)$.
  By \cite[Remark 5.12]{HLS}, we have
  \[
    \tau_{\widehat{R^+}}\bigl(R,\Delta\bigr) = \tau_+\bigl(\cO_X,\Delta\bigr),
  \]
  where we recall from Remark
  \ref{rem:bigcmalgexist}$(\ref{rem:bigcmalgexistrplusmixedchar})$
  that $\widehat{R^+}$ is a
  big Cohen--Macaulay $R^+$-algebra.
\end{remark}
This definition also extends to quasi-projective morphisms $U \to \Spec(R)$
under $\QQ$-Cartier assumptions.
\begin{citeddef}[{\cite[Definition 5.10]{HLS}}]\label{def:hlsqproj}
  Let $(R,\fm)$ be a complete Noetherian local ring of residue characteristic $p
  > 0$, and consider a quasi-projective morphism $U \to \Spec(R)$ from a
  normal integral scheme.
  As in Definition \ref{def:b0hls}, by fixing a choice of dualizing complex
  $\omega_R^\bullet$ on $R$, we have an induced canonical sheaf $\omega_U$ on
  $U$, and we can fix a canonical divisor $K_U$ on $U$ that satisfies $\omega_U
  = \cO_U(K_U)$.
  \par Fix an effective $\QQ$-Weil divisor $\Delta_U$ on $U$ such that
  $K_U+\Delta_U$ is $\QQ$-Cartier, and fix a Cartier divisor $G_U$ on $U$ such
  that $K_U+\Delta_U+G_U \ge 0$.
  The \textsl{$+$-test ideal of the pair $(U,\Delta_U)$} is the coherent ideal
  sheaf
  \[
    \tau_+\bigl(\cO_U,\Delta_U\bigr) \coloneqq \tau_+\bigl(\cO_X,\Delta\bigr)
    \bigr\rvert_U
    \subseteq \cO_U,
  \]
  where $\tau_+(\cO_X,\Delta)$ is defined by finding a normal integral
  projective compactification $X \to \Spec(R)$ of $U \to \Spec(R)$, together
  with compactifications $G$ for $G_U$ and $K_X+\Delta+G$ for
  $K_U+\Delta_U+G_U$ such that $G \ge 0$ and $K_X+\Delta+G \ge 0$.
  This definition does not depend on the choice of compactifications by
  \cite[Proposition 5.7]{HLS}.
\end{citeddef}
\subsubsection{$+$-test ideals for ideal pairs}
We also define $+$-test ideals for ideal pairs, following \cite{HLS}.
\begin{citeddef}[{\cite[p.\ 24]{HLS}}]\label{def:plusidealpairs}
  With notation as in Definition \ref{def:b0hls}, suppose that $K_X$ is
  $\QQ$-Cartier.
  Let $\fa \subseteq \cO_X$ be a coherent ideal sheaf, and let $\mu\colon X' \to
  X$ be the normalized blowup of $\fa$ in which case $\mu^{-1}\fa\cdot
  \cO_{X'} = \cO_{X'}(-F)$ for an effective Cartier divisor $F$ on $X'$.
  For every rational number $t \ge 0$, the \textsl{$+$-test ideal of the
  pair $(X,\fa^t)$} is the coherent ideal sheaf
  \[
    \tau_+\bigl(\cO_X,\fa^t\bigr) \subseteq \cO_X
  \]
  such
  that $\tau_+(\cO_X,\fa^t) \otimes \sL \subseteq \sL$ is globally
  generated by
  \begin{align*}
    \MoveEqLeft[5]\BB^0\bigl(X',tF,\cO_{X'}(K_{X'}-\mu^*K_X+L')\bigr)\\
    &\subseteq
    \BB^0\Bigl(X',\{tF\},\cO_{X'}\bigl(K_{X'}-\mu^*K_X-\lfloor tF
    \rfloor+L'\bigr)\Bigr)\\
    &\subseteq H^0\bigl(X',\cO_{X'}(K_{X'} - \mu^*K_X+L')\bigr)\\
    &\subseteq H^0\bigl(X,\cO_X(L)\bigr),
  \end{align*}
  where $\sL = \cO_X(L)$ is a sufficiently ample invertible sheaf and
  $L' = \mu^*L$.
  The ideal sheaf $\tau_+(\cO_X,\fa^t)$ is independent of the choice of $\sL$ by
  \cite[p.\ 24]{HLS}.
\end{citeddef}
We compare the definitions of $+$-test ideals for divisor pairs and for ideal
pairs.
We note that the ideal on the left-hand side of \eqref{rem:hls64cont} below
is related to Robinson's notion of the big Cohen--Macaulay test
ideal for a triple $(R,\Delta,\fa^t)$ \cite{Rob} defined in Definition
\ref{def:ms18like}.
See Lemma \ref{lem:testidealasasum}$(\ref{lem:hlscomparison})$.
\begin{lemma}[{cf.\ \cite[Remark 6.4]{HLS}}]\label{lem:hls64}
  With notation as in Definition \ref{def:b0hls}, if $K_X$ is
  $\QQ$-Cartier, we have
  \begin{align}
    \adjustlimits{\sum^\infty}_{m=1}
    {\sum}_{\Gamma \in \lvert \sL^{\otimes m} \otimes \fa^m \rvert}
    \tau_+\biggl(\cO_X,\frac{t}{m} \Gamma \biggr) &\subseteq
    \tau_+\bigl(\cO_X,\fa^t\bigr).\label{rem:hls64cont}
  \intertext{In particular, if $X = \Spec(R)$, we have}
    \sum_{m=1}^\infty\sum_{g \in \fa^m}
    \tau_+\biggl(\cO_X,\frac{t}{m} \prdiv_R(g) \biggr) &\subseteq
    \tau_+\bigl(\cO_X,\fa^t\bigr).\nonumber
  \end{align}
\end{lemma}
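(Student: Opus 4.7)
The plan is to reduce the inclusion to a comparison of $\BB^0$-modules on the normalized blowup $\mu\colon X' \to X$ of $\fa$, where $\mu^{-1}\fa \cdot \cO_{X'} = \cO_{X'}(-F)$, and then to exploit monotonicity of $\BB^0$ in its perturbation-divisor argument. The geometric input is that for any $\Gamma \in \lvert \sL^{\otimes m}\otimes \fa^m\rvert$, the defining section of $\Gamma$ vanishes to order $m$ along $\fa$, so $\mu^*\Gamma = mF + E_\Gamma$ for some effective Cartier divisor $E_\Gamma \ge 0$ on $X'$; hence $\tfrac{t}{m}\mu^*\Gamma \ge tF$ as $\QQ$-Cartier divisors on $X'$.

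By Definition \ref{def:plusidealpairs}, for a sufficiently ample $\sL_0 = \cO_X(L_0)$ with $L_0' = \mu^*L_0$, the sheaf $\tau_+(\cO_X, \fa^t) \otimes \sL_0$ is globally generated inside $H^0(X,\sL_0)$ by $\BB^0(X', tF, \cO_{X'}(K_{X'} - \mu^*K_X + L_0'))$. For the divisor-pair side, I would argue that $\tau_+(\cO_X, \tfrac{t}{m}\Gamma) \otimes \sL_0$ is likewise globally generated by $\BB^0(X', \tfrac{t}{m}\mu^*\Gamma, \cO_{X'}(K_{X'} - \mu^*K_X + L_0'))$. This uses birational invariance of $\BB^0$: pulling back along $\mu$ identifies sections of $\omega_X \otimes \sL_0$ with sections of $\omega_{X'} \otimes \mu^*\sL_0$ shifted by the discrepancy divisor $K_{X'} - \mu^*K_X$, compatibly with the maps into $X^+$-cohomology that define $\BB^0$.

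Monotonicity of $\BB^0$ in the perturbation divisor is built into its definition: a larger perturbation $B$ enlarges the target sheaf $\cO_{X^+}(-\nu^*M + \nu^*B)$, hence enlarges the kernel of the map from the source $H^d_\fm$-cohomology, hence shrinks the Matlis-dual annihilator inside $H^0(X, \cO_X(K_X + M))$ that defines $\BB^0$. Applied to $\tfrac{t}{m}\mu^*\Gamma \ge tF$, this yields $\tau_+(\cO_X, \tfrac{t}{m}\Gamma) \subseteq \tau_+(\cO_X, \fa^t)$, and summing over $m$ and $\Gamma$ gives \eqref{rem:hls64cont}. The second statement is the specialization $X = \Spec(R)$, where $\sL_0$ may be taken trivial and the effective members of $\lvert \fa^m\rvert$ are precisely $\prdiv_R(g)$ for $g \in \fa^m$. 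The main obstacle will be the birational-invariance rewriting: tracking the shift by $K_{X'} - \mu^*K_X$ and the interplay between the ample twist on $X$ and its pullback on $X'$ requires care, but is standard in the literature on $+$-test ideals.
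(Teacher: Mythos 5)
Your argument is correct and is essentially the paper's proof: both pass to the normalized blowup $\mu\colon X' \to X$ of $\fa$, note that $\mu^*\Gamma \ge mF$ so $\tfrac{t}{m}\mu^*\Gamma \ge tF$, invoke the birational transformation rule for $\BB^0$ to re-express $\tau_+(\cO_X, \tfrac{t}{m}\Gamma)$ via a $\BB^0$-module on $X'$, and conclude by monotonicity of $\BB^0$ in the perturbation divisor. The only difference is that the paper cites these last two steps as \cite[Proposition 3.9]{HLS} and \cite[Lemma 3.3$(a)$]{HLS} respectively, whereas you unfold them in prose --- accurately.
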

\begin{proof}
  We adapt the proof of \cite[Remark 6.4]{HLS}, which is a version of our lemma
  for test modules $\tau_+(\omega_X,\Delta)$ and $\tau_+(\omega_X,\fa^t)$.
  \par Let $\sL = \cO_X(L)$ be a sufficiently ample invertible sheaf on $X$ such
  that $\fa \otimes \sL$ is globally generated.
  Let $\mu\colon X' \to X$ be the normalized blowup of $\fa$ in which case
  $\mu^{-1}\fa\cdot\cO_{X'} = \cO_{X'}(-F)$ for an effective Cartier divisor $F$
  on $X'$.
  Set $L' = \mu^*L$.
  First, we note that for every integer $m \ge 1$, we have
  $\mu^*\frac{1}{m} \Gamma \ge F$ for all $\Gamma \in \lvert
  \sL^{\otimes m} \otimes \fa^m \rvert$.
  Thus, by \cite[Lemma 3.3$(a)$ and Proposition 3.9]{HLS}, we see that
  $\BB^0(X',tF,\cO_{X'}(K_{X'} - \mu^*K_X +L'))$ contains
  \[
    \BB^0\biggl(X',t \cdot \mu^* \frac{1}{m} \Gamma,\cO_{X'}(K_{X'} - \mu^*K_X +
    L')\biggr) \cong \BB^0\biggl(X,\frac{t}{m} \Gamma,\cO_X(L)\biggr).
  \]
  By the definitions of $\tau_+(\cO_X,\frac{t}{m} \Gamma)$ and of
  $\tau_+(\cO_X,\fa^t)$, we therefore have \eqref{rem:hls64cont}.
\end{proof}
\subsection{Multiplier ideals for excellent schemes of equal
characteristic zero}
Finally, we define multiplier ideals for excellent schemes of equal
characteristic zero, following \citeleft\citen{dFM09}\citemid
\S2\citepunct \citen{ST}\citemid \S2.1\citeright.
\begin{definition}[cf.\ {\citeleft\citen{Laz04b}\citemid Generalization
  9.2.8\citepunct \citen{dFM09}\citemid p.\ 495\citepunct \citen{ST}\citemid
  Definition 2.4\citeright}]
  \label{def:multiplierideal}
  Let $X$ be an excellent normal integral scheme of equal characteristic zero
  with a dualizing complex $\omega_X^\bullet$ and associated choice of
  canonical divisor $K_X$.
  \par Let $\Delta$ be an effective $\QQ$-Weil divisor on $X$ such that
  $K_X+\Delta$ is $\QQ$-Cartier.
  Let $\fa_1,\fa_2,\ldots,\fa_n \subseteq \cO_X$ be coherent ideal sheaves,
  and let $\mu\colon X' \to X$ be a log resolution for the triple
  $(X,\Delta,\fa_1\fa_2\cdots\fa_n)$, which exists by \cite[Theorem 1.1]{Tem08}.
  For each $i$, write $\mu^{-1}\fa_i\cdot\cO_{X'} = \cO_{X'}(-F_i)$ for
  effective Cartier divisors $F_i$ on $X'$.
  For real numbers $t_1,t_2,\ldots,t_n \ge 0$, the \textsl{multiplier ideal
  of the triple $(X,\Delta,\fa_1^{t_1}\fa_2^{t_2} \cdots \fa_n^{t_n})$} is the
  coherent ideal sheaf
  \[
    \cJ\bigl(X,\Delta,\fa_1^{t_1}\fa_2^{t_2} \cdots \fa_n^{t_n}\bigr)
    \coloneqq \mu_*\biggl(\cO_{X'}\biggl(K_{X'}-\biggl\lfloor\mu^*(K_X+\Delta)
    + \sum_{i=1}^n t_iF_i \biggr\rfloor\biggr)\biggr)
    \subseteq \cO_X.
  \]
  This definition does not depend on the choice of $\mu$ by
  \citeleft\citen{dFM09}\citemid Proposition 2.2\citepunct
  \citen{ST}\citemid Remark 2.5$(ii)$\citeright.
\end{definition}

\section{Big Cohen--Macaulay test ideals of triples}
\label{sect:bcmtestfixedgens}
In this section, we define a slight variation of
Robinson's version of test ideals from \cite{Rob}, which combines
aspects of the big Cohen--Macaulay test ideals defined in \cite{MS} and
\cite{PRG} together with aspects of the perfectoid test ideals defined in
\cite{MS18}.
We then prove fundamental results on Robinson's test ideals necessary to prove
Theorems
\ref{thm:mainelshhms},
\ref{thm:mainjohnson}, \ref{thm:mainjohnsonhhty}, and \ref{thm:ty23like}.
Throughout this section, we will use the notation from Notation
\ref{notation:ms21like}.
\subsection{Definition and preliminaries}
We start with slight variations of Robinson's definitions of big Cohen--Macaulay
test ideals for triples from \cite{Rob} (see Remark \ref{rem:difffromrobinson}).
There are two versions of test ideals in \cite{Rob}:
the ideals
$\tau_B(R,\Delta,[\underline{f_1}]^{t_1}[\underline{f_2}]^{t_2} \cdots
[\underline{f_n}]^{t_n})$ that fix sets of generators for ideals, and the ideals
$\tau_B(R,\Delta,\fa_1^{t_1}\fa_2^{t_2} \cdots
\fa_n^{t_n})$ that do not fix sets of generators.
While our main results in this paper can be shown using only the ideals
$\tau_B(R,\Delta,[\underline{f_1}]^{t_1}[\underline{f_2}]^{t_2} \cdots
[\underline{f_n}]^{t_n})$, we will prove many of our results for both versions
of Robinson's test ideals.
We note that it is necessary to consider the ideals
$\tau_B(R,\Delta,[\underline{f_1}]^{t_1}[\underline{f_2}]^{t_2} \cdots
[\underline{f_n}]^{t_n})$ since we do not know whether the subadditivity theorem
(Theorem \ref{thm:subadditivity}) and
the strongest version of our unambiguity
statement (Proposition \ref{prop:notambig}) hold for
$\tau_B(R,\Delta,\fa_1^{t_1}\fa_2^{t_2} \cdots \fa_n^{t_n})$.
\par The definition with a fixed set of generators
combines the definitions in \cite{MS18} and in \cite{MS}.
Compared to \cite{MS18},
this definition works in arbitrary characteristic, and
does not include the factor $p^{1/p^\infty}$ coming from almost
mathematics.
One also ranges over all integers $m_1,m_2,\ldots,m_n$
instead of just $p$-th powers, and there is no perturbation present
in contrast to
\cite[Definition 3.5]{MS18}.
See \cite[Remark 3.7]{Rob}.
\par A similar definition that does not fix a set of generators also appeared in
the work of Sato and Takagi \cite[Definition 3.2$(i)$]{ST}.
See Lemma \ref{lem:testidealasasum}$(\ref{lem:taubasasum})$.
\begin{definition}[see {\cite[Definition 3.6]{Rob}}]
  \label{def:ms18like}
  With notation as in Notation \ref{notation:ms21like}, fix sets
  \[
    \{f_{i,1},f_{i,2},\ldots,f_{i,r_i}\}
  \]
  of elements in $R$ for $i \in \{1,2,\ldots,n\}$.
  For fixed $i$, we denote these data by $[\underline{f_i}]$, and we denote by
  $\fa_i$ the ideal $(f_{i,1},f_{i,2},\ldots,f_{i,r_i})$.
  Fix real numbers $t_1,t_2,\ldots,t_n \ge 0$.
  For an $R^+$-algebra $B$ that is big Cohen--Macaulay over $R$, we set
  \begin{align*}
    0^{B,K_R+\Delta,\fa_1^{t_1}\fa_2^{t_2}\cdots
    \fa_n^{t_n}}_{H^d_\fm(R)}
    &\coloneqq
    \Set*{\eta \in H^d_\fm(R) \given \begin{tabular}{@{}c@{}}
        for all integers $m_1,m_2,\ldots,m_n > 0$, we have\\
        $h^{1/N}g_1^{1/m_1}g_2^{1/m_2}\cdots g_n^{1/m_n} \eta = 0$ in
        $H^d_\fm(B)$\\
        for all $g_i \in \fa_i^{\lceil m_it_i \rceil}$
    \end{tabular}}\\
    0^{B,K_R+\Delta,[\underline{f_1}]^{t_1}[\underline{f_2}]^{t_2}\cdots
    [\underline{f_n}]^{t_n}}_{H^d_\fm(R)}
    &\coloneqq
    \Set*{\eta \in H^d_\fm(R) \given \begin{tabular}{@{}c@{}}
        for all integers $m_1,m_2,\ldots,m_n > 0$, we have\\
        $h^{1/N}g_1g_2\cdots g_n \eta = 0$ in $H^d_\fm(B)$\\
        for all $g_i = \prod_{k=1}^{a_i} f_{i,j_k}^{1/m_i}$ where $a_i \ge m_it_i$
    \end{tabular}}
  \end{align*}
  where $\eta$ in the equations $h^{1/N}g_1^{1/m_1}g_2^{1/m_2}\cdots g_n^{1/m_n}
  \eta = 0$ and $f^{1/N}g_1g_2\cdots g_n \eta =
  0$ is interpreted as the image of $\eta$ under the map $H^d_\fm(R) \to
  H^d_\fm(B)$.
  We then set
  \begin{align*}
    \tau_B\bigl(R,\Delta,\fa_1^{t_1}\fa_2^{t_2} \cdots
    \fa_n^{t_n}\bigr) &\coloneqq \Ann_{\omega_R}\biggl(
      0^{B,K_R+\Delta,\fa_1^{t_1}\fa_2^{t_2} \cdots
    \fa_n^{t_n}}_{H^d_\fm(R)} \biggr)\\
    \tau_B\bigl(R,\Delta,[\underline{f_1}]^{t_1}[\underline{f_2}]^{t_2} \cdots
    [\underline{f_n}]^{t_n}\bigr) &\coloneqq \Ann_{\omega_R}\biggl(
      0^{B,K_R+\Delta,[\underline{f_1}]^{t_1}[\underline{f_2}]^{t_2}\cdots
    [\underline{f_n}]^{t_n}}_{H^d_\fm(R)} \biggr)
  \end{align*}
  which are ideals in $R$ since they are $R$-submodules of
  $\tau_B(R,\Delta)$.
  For both $0^B$ and $\tau_B$, we omit the divisor $\Delta$ (resp.\ $\fa_i$ or
  $[\underline{f_i}]$) from our notation if $\Delta = 0$ (resp.\ if $n=0$).
\end{definition}
\begin{remark}\label{rem:difffromrobinson}
  In addition to allowing for multiple tuples of generators $[\underline{f_i}]$
  or ideals $\fa_i$,
  Definition \ref{def:ms18like} differs slightly from Robinson's definition in
  \cite[Definition 3.6]{Rob} in that Robinson ranges over all sufficiently large
  $m_i$ instead of all $m_i$.
  However, the ideal defined in
  Definition \ref{def:ms18like} and Robinson's original definition from
  \cite[Definition 3.6]{Rob} coincide
  by Lemma \ref{lem:ms18lem36} below.
\end{remark}
\begin{remark}
  Robinson's definitions in Definition \ref{def:ms18like} allow
  the exponents $t_1,t_2,\ldots,t_n$ to be real numbers.
  Some of our results below (Lemma
  \ref{lem:testidealasasum}$(\ref{lem:hlscomparison})$, Proposition
  \ref{prop:notambig}, and Proposition \ref{prop:notambignogens})
  require rational exponents $t_1,t_2,\ldots,t_n$, at least in their strongest
  forms.
  In \cite{MS18}, Ma and Schwede are able to prove the analogues of these
  results for their perfectoid test ideals with real exponents
  $t_1,t_2,\ldots,t_n$ because their definitions include
  perturbations.
  According to \cite[Remark 3.7]{Rob}, it is expected that these perturbations
  are unnecessary for sufficiently large choices of the big Cohen--Macaulay
  algebra $B$.
\end{remark}
The two definitions in Definition \ref{def:ms18like} are related in the
following manner.
This comparison appears in \cite[Proposition
3.3$(a)$]{MS18} for perfectoid test ideals when $\Delta = 0$ and in
\cite[Proposition 3.8]{Rob} for big Cohen--Macaulay test ideals of triples
when $n = 1$.
\begin{proposition}[cf.\ {\citeleft\citen{MS18}\citemid Proposition
  3.3$(a)$\citepunct \citen{Rob}\citemid Proposition 3.8\citeright}]
  \label{prop:ms18comparison}
  Fix notation as in Notation \ref{notation:ms21like} and Definition
  \ref{def:ms18like}.
  We have
  \[
    \tau_B\bigl(R,\Delta,[\underline{f_1}]^{t_1}[\underline{f_2}]^{t_2}
    \cdots [\underline{f_n}]^{t_n}\bigr) \subseteq
    \tau_B\bigl(R,\Delta,\fa_1^{t_1}\fa_2^{t_2}\cdots \fa_n^{t_n}\bigr).
  \]
\end{proposition}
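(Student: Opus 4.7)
Since each test ideal is defined as $\Ann_{\omega_R}$ of a submodule of $H^d_\fm(R)$, and annihilator reverses inclusions, the desired inclusion of ideals
\[
  \tau_B\bigl(R,\Delta,[\underline{f_1}]^{t_1}\cdots[\underline{f_n}]^{t_n}\bigr)
  \subseteq
  \tau_B\bigl(R,\Delta,\fa_1^{t_1}\cdots\fa_n^{t_n}\bigr)
\]
is equivalent to the reverse inclusion of submodules
\[
  0^{B,K_R+\Delta,\fa_1^{t_1}\cdots\fa_n^{t_n}}_{H^d_\fm(R)}
  \subseteq
  0^{B,K_R+\Delta,[\underline{f_1}]^{t_1}\cdots[\underline{f_n}]^{t_n}}_{H^d_\fm(R)}.
\]
The plan is to prove this reverse inclusion directly by unwinding the definitions.

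Let $\eta$ belong to the left-hand side, and fix integers $m_1,m_2,\ldots,m_n > 0$ together with products $g_i = \prod_{k=1}^{a_i} f_{i,j_k}^{1/m_i}$ in $R^+$ with $a_i \ge m_i t_i$. I must show that $h^{1/N} g_1 g_2 \cdots g_n \eta = 0$ in $H^d_\fm(B)$. The key observation is that each $g_i$ is itself an $m_i$-th root (in $R^+$) of the honest element $y_i \coloneqq g_i^{m_i} = \prod_{k=1}^{a_i} f_{i,j_k} \in R$. Since $a_i$ is an integer satisfying $a_i \ge m_i t_i$, we have $a_i \ge \lceil m_i t_i \rceil$, so $y_i \in \fa_i^{a_i} \subseteq \fa_i^{\lceil m_i t_i \rceil}$. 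Applying the ideal-version condition on $\eta$ to the elements $y_1,y_2,\ldots,y_n$, with the particular $m_i$-th roots $y_i^{1/m_i} \coloneqq g_i$ (which are legitimate choices of $m_i$-th root in $R^+$), yields
\[
  h^{1/N} g_1 g_2 \cdots g_n \eta
  = h^{1/N} y_1^{1/m_1} y_2^{1/m_2} \cdots y_n^{1/m_n} \eta
  = 0
\]
in $H^d_\fm(B)$, as required.

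The only delicate point is the interpretation of the quantifier over $g_i^{1/m_i}$ in the definition of $0^{B,K_R+\Delta,\fa_1^{t_1}\cdots\fa_n^{t_n}}_{H^d_\fm(R)}$: the argument above uses, and relies on, the reading that the vanishing condition is required to hold for \emph{every} choice of $m_i$-th root of every $g_i \in \fa_i^{\lceil m_i t_i \rceil}$ in $R^+$ (rather than for just one preferred choice). This is the natural reading consistent with the analogous definitions in \cite{MS18,Rob}, and once adopted, the proof is essentially formal. I therefore expect the main work to be verifying that this interpretation is indeed the intended one; no further technical obstacle arises, and the argument does not require Lemma \ref{lem:ms18lem36} or any auxiliary results, since the exponents $a_i$ and $\lceil m_i t_i \rceil$ already match up at the level of integers.
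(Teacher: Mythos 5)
Your proof is correct and follows the same argument as the paper: interpret each $g_i = \prod_{k=1}^{a_i} f_{i,j_k}^{1/m_i}$ as an $m_i$-th root of $\prod_{k=1}^{a_i} f_{i,j_k} \in \fa_i^{a_i} \subseteq \fa_i^{\lceil m_it_i \rceil}$, then invoke the containment of $0^B$-modules in the direction reversed by $\Ann_{\omega_R}$. Your explicit observation that $a_i \ge m_it_i$ with $a_i$ an integer forces $a_i \ge \lceil m_it_i \rceil$, and your remark on the quantifier reading, are both consistent with the paper's intent; no auxiliary lemma is needed, exactly as you say.
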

\begin{proof}
  It suffices to show that
  \[
    0^{B,K_R+\Delta,[\underline{f_1}]^{t_1}[\underline{f_2}]^{t_2}\cdots
    [\underline{f_n}]^{t_n}}_{H^d_\fm(R)} \supseteq
    0^{B,K_R+\Delta,\fa_1^{t_1}\fa_2^{t_2}\cdots
    \fa_n^{t_n}}_{H^d_\fm(R)}.
  \]
  Let $\eta$ be an element in the module on the right-hand side, and consider
  elements
  \[
    g_i = \prod_{k=1}^{a_i} f_{i,j_k}^{1/m_i} =
    \biggl(\prod_{k=1}^{a_i} f_{i,j_k}\biggr)^{1/m_i}
  \]
  for each $i$ where $a_i \ge m_it_i$.
  Since each $f_{i,j_k}$ is a generator of $\fa_i$, we have
  \[
    \prod_{k=1}^{a_i} f_{i,j_k} \in \fa_i^{a_i} \subseteq \fa_i^{\lceil m_it_i
    \rceil}.
  \]
  Thus, each $g_i$ can be written as an $m_i$-th root of an element in
  $\fa_i^{\lceil m_it_i \rceil}$, and $h^{1/N}g_1g_2\cdots g_n\eta = 0$.
\end{proof}
We also note that in Definition \ref{def:ms18like}, we may restrict to $m_i$
sufficiently large and divisible.
\begin{lemma}[cf.\ {\cite[Lemma 3.6]{MS18}}]\label{lem:ms18lem36}
  Fix notation as in Notation \ref{notation:ms21like} and Definition
  \ref{def:ms18like}.
  In the definitions of
  $0^{B,K_R+\Delta,\fa_1^{t_1}\fa_2^{t_2}\cdots \fa_n^{t_n}}_{H^d_\fm(R)}$ and
  $0^{B,K_R+\Delta,[\underline{f_1}]^{t_1}[\underline{f_2}]^{t_2}\cdots
  [\underline{f_n}]^{t_n}}_{H^d_\fm(R)}$,
  we may restrict to $m_1,m_2,\ldots,m_n > 0$ sufficiently large and
  divisible.
\end{lemma}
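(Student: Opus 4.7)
The plan is to show that the two submodules of $H^d_\fm(R)$ defined using "all $m_i > 0$" coincide with the analogous submodules defined using only sufficiently large and divisible $m_i$. One inclusion is trivial (fewer conditions give a larger submodule), so the real content is the reverse: any obstruction witnessed at a small $m_i$ can be upgraded to an obstruction at a large, divisible $M_i$.

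The key observation is the following reinterpretation trick. Suppose we have fixed a collection of witnesses for large divisible exponents, and we want to verify the vanishing condition at an arbitrary tuple $(m_1,\ldots,m_n)$ with elements $g_i \in \fa_i^{\lceil m_it_i \rceil}$ (respectively, with $g_i = \prod_{k=1}^{a_i} f_{i,j_k}^{1/m_i}$ and $a_i \ge m_it_i$). For each $i$, choose a positive integer $l_i$ so that $M_i \coloneqq l_im_i$ is as large and divisible as desired, and set $g_i' \coloneqq g_i^{l_i}$. Then $g_i^{1/m_i}$ can be recognized as an $M_i$-th root of $g_i'$, since $(g_i^{1/m_i})^{M_i} = g_i^{l_i} = g_i'$.

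Now one just checks that this rewriting is compatible with the shape of each definition. For the ideal version, $g_i' = g_i^{l_i} \in \fa_i^{l_i \lceil m_it_i \rceil} \subseteq \fa_i^{\lceil M_it_i \rceil}$, so $g_i^{1/m_i} = (g_i')^{1/M_i}$ falls under the universal quantifier for the exponent $M_i$. For the fixed-generator version, observe that
\[
g_i \;=\; \prod_{k=1}^{a_i} f_{i,j_k}^{1/m_i} \;=\; \prod_{k=1}^{a_i}\prod_{s=1}^{l_i} f_{i,j_k}^{1/M_i},
\]
which is a product of $a_il_i$ factors of the form $f_{i,j}^{1/M_i}$, and $a_il_i \ge l_im_it_i = M_it_i$, so it has the shape required by the definition at exponent $M_i$. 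Applying the assumed vanishing for $(M_1,\ldots,M_n)$ therefore yields $h^{1/N}g_1^{1/m_1}\cdots g_n^{1/m_n}\,\eta = 0$ in $H^d_\fm(B)$, as needed.

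The only mild subtlety is the ambiguity of $m_i$-th roots modulo $m_i$-th roots of unity, but this does not cause any trouble: once we pick a specific $m_i$-th root of $g_i$, that element is literally an $M_i$-th root of $g_i^{l_i}$, and any universal quantifier over choices of roots in the large-divisible setting then covers the element at hand. There is no further obstacle; the whole argument is essentially a one-line rescaling, and the same reasoning will be invoked repeatedly in subsequent proofs (for instance in the unambiguity-of-exponents statement, Proposition \ref{prop:notambig}).
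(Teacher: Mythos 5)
Your proof is correct and uses essentially the same rescaling argument as the paper: given arbitrary $m_i$, you replace $g_i$ by $g_i^{l_i}$ and $m_i$ by $M_i = l_i m_i$, exactly as the paper replaces $g_i$ by $g_i^{m_{i,0}}$ and $m_i$ by $m_{i,0}m_i$. The observation that the trivial inclusion is just ``fewer conditions, larger submodule'' and the verification of the required shapes (ideal membership via $l_i\lceil m_it_i\rceil \le \lceil M_it_i\rceil$, and the factor count $a_il_i \ge M_it_i$) match the paper's proof step for step.
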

\begin{proof}
  Fix integers $m_{i,0} > 0$ for each $i$.
  We want to show that
  \begin{align*}
    0^{B,K_R+\Delta,\fa_1^{t_1}\fa_2^{t_2}\cdots \fa_n^{t_n}}_{H^d_\fm(R)}
    &=
    \Set*{\eta \in H^d_\fm(R) \given \begin{tabular}{@{}c@{}}
        for all integers $m_i$ divisible by $m_{i,0}$, we have\\
        $h^{1/N}g_1^{1/m_1}g_2^{1/m_2}\cdots g_n^{1/m_n} \eta = 0$ in
        $H^d_\fm(B)$\\
        for all $g_i \in \fa_i^{\lceil m_it_i \rceil}$
    \end{tabular}},\\
    0^{B,K_R+\Delta,[\underline{f_1}]^{t_1}[\underline{f_2}]^{t_2}\cdots
    [\underline{f_n}]^{t_n}}_{H^d_\fm(R)}
    &=
    \Set*{\eta \in H^d_\fm(R) \given \begin{tabular}{@{}c@{}}
        for all integers $m_i$ divisible by $m_{i,0}$, we have\\
        $h^{1/N}g_1g_2\cdots g_n \eta = 0$ in $H^d_\fm(B)$\\
        for all $g_i = \prod_{k=1}^{a_i} f_{i,j_k}^{1/m_i}$ where $a_i \ge m_it_i$
    \end{tabular}}.
  \end{align*}
  The inclusion $\subseteq$ holds in both cases
  since restricting to $m_i$ divisible by
  $m_{i,0}$ results in fewer conditions.
  \par For the inclusion $\supseteq$, we first consider the case for the
  $\fa_i$.
  Let $\eta$ be an element in the module on the
  right-hand side, and consider elements $g_i \in \fa_i^{\lceil m_it_i \rceil}$
  for each $i$.
  We then have
  \[
    g_i^{m_{i,0}} \in \fa_i^{m_{i,0}\lceil m_it_i \rceil} \subseteq
    \fa_i^{\lceil m_{i,0}m_it_i \rceil}
  \]
  for each $i$, and hence
  \[
    h^{1/N}g_1^{1/m_1}g_2^{1/m_2}\cdots g_n^{1/m_n} \eta =
    h^{1/N}(g_1^{m_{1,0}})^{1/(m_{1,0}m_1)}a
    (g_2^{m_{2,0}})^{1/(m_{2,0}m_2)} \cdots
    (g_n^{m_{n,0}})^{1/(m_{n,0}m_n)}
    \eta = 0.
  \]
  \par It remains to show the inclusion $\supseteq$ for the
  $[\underline{f_i}]$.
  Let $\eta$ be an element in the module on the
  right-hand side, and consider elements
  $g_i = \prod_{k=1}^{a_i} f_{i,j_k}^{1/m_i}$ for each $i$ where $a_i
  \ge m_it_i$.
  We can then write
  \[
    g_i = \prod_{k=1}^{a_i} \bigl(f_{i,j_k}^{m_{i,0}}\bigr)^{1/(m_{i,0}m_i)} =
    \prod_{k'=1}^{m_{i,0}a_i} f_{i,j_{k'}}^{1/(m_{i,0}m_i)}
  \]
  for some sequence $(j_{k'})_{k'=1}^{m_{i,0}a_i}$,
  and hence $h^{1/N}g_1g_2\cdots g_n \eta = 0$.
\end{proof}
We connect Definition \ref{def:ms18like}
to the big Cohen--Macaulay test ideals reviewed
in \S\ref{sect:hls}.
In $(\ref{lem:taubasasum})$ below, \eqref{eq:taubasasumnogens} connects
Robinson's definition (Definition \ref{def:ms18like}) without a fixed choice of
generators to Sato and Takagi's definition in \cite[Definition 3.2$(i)$]{ST}.
A version of \eqref{eq:taubasasum} when $n = 1$
appears in the proof of \cite[Theorem 3.11]{Rob}.
\begin{lemma}\label{lem:testidealasasum}
  Fix notation as in Notation \ref{notation:ms21like} and Definition
  \ref{def:ms18like}.
  \begin{enumerate}[label=$(\roman*)$,ref=\roman*]
    \item\label{lem:closureasintersect}
      We have
      \begin{align*}
        0^{B,K_R+\Delta,\fa_1^{t_1}\fa_2^{t_2}\cdots \fa_n^{t_n}}_{H^d_\fm(R)}
        &= \bigcap_{m_1,m_2,\ldots,m_n \in \ZZ_{>0}}
        \bigcap_{g_i \in \fa_i^{\lceil m_it_i \rceil}} \ker\biggl( H^d_\fm(R)
        \xrightarrow{h^{1/N}g_1^{1/m_1}g_2^{1/m_2}\cdots g_n^{1/m_n}}
        H^d_\fm(B) \biggr),\\
        0^{B,K_R+\Delta,[\underline{f_1}]^{t_1}[\underline{f_2}]^{t_2}\cdots
        [\underline{f_n}]^{t_n}}_{H^d_\fm(R)}
        &= \bigcap_{m_1,m_2,\ldots,m_n \in \ZZ_{>0}}
        \bigcap_{\substack{g_i = \prod_{k=1}^{a_i} f_{i,j_k}^{1/m_i}
        \\\text{where}\ a_i \ge m_it_i}} \ker\biggl( H^d_\fm(R)
        \xrightarrow{h^{1/N}g_1g_2\cdots g_n} H^d_\fm(B) \biggr).
      \end{align*}
      Moreover, on the right-hand sides
      we may restrict to $m_1,m_2,\ldots,m_n > 0$
      sufficiently large and divisible.
    \item\label{lem:taubasasum}
      We have
      \begin{align}
        \tau_B\bigl(R,\Delta,\fa_1^{t_1}\fa_2^{t_2}\cdots \fa_n^{t_n}\bigr) &=
        \sum_{m_1,m_2,\ldots,m_n \in
        \ZZ_{>0}} \sum_{g_i \in \fa_i^{\lceil m_it_i \rceil}}
        \tau_B\biggl(R,\Delta+\sum_{i=1}^n \frac{1}{m_i}\prdiv_R(g_i) \biggr),
        \label{eq:taubasasumnogens}\\
        \tau_B\bigl(R,\Delta,[\underline{f_1}]^{t_1}[\underline{f_2}]^{t_2}
        \cdots [\underline{f_n}]^{t_n}\bigr) &=
        \sum_{m_1,m_2,\ldots,m_n \in
        \ZZ_{>0}} \sum_{\substack{g_i = \prod_{k=1}^{a_i} f_{i,j_k}^{1/m_i}
        \\\text{where}\ a_i \ge m_it_i}}
        \tau_B\biggl(R,\Delta+\sum_{i=1}^n \prdiv_R(g_i) \biggr),
        \label{eq:taubasasum}
      \end{align}
      where $\prdiv_R(g_i) \coloneqq \frac{1}{m_i} \sum_{k=1}^{a_i}
      \prdiv_R(f_{i,j_k})$.
      Moreover, on the right-hand sides
      we may restrict to $m_1,m_2,\ldots,m_n > 0$
      sufficiently large and divisible.
    \item\label{lem:hlscomparison}
      If the $t_i$ are rational numbers, then we have
      \begin{equation}
        \tau_B\bigl(R,\Delta,\fa_1^{t_1}\fa_2^{t_2}\cdots \fa_n^{t_n}\bigr)
        =
        \sum_{m_1,m_2,\ldots,m_n \in \ZZ_{>0}} \sum_{g_i \in \fa_i^{m_i}}
        \tau_B\biggl(R,\Delta+\sum_{i=1}^n
        \frac{t_i}{m_i} \prdiv_R(g_i)\biggr).
        \label{eq:hlscomparisonrational}
      \end{equation}
      If, moreover, $r_i = 1$ for all $i$, then we have
      \[
        \tau_B\bigl(R,\Delta,[\underline{f_1}]^{t_1}[\underline{f_2}]^{t_2}
        \cdots [\underline{f_n}]^{t_n}\bigr)
        = \tau_B\bigl(R,\Delta,\fa_1^{t_1}\fa_2^{t_2}\cdots \fa_n^{t_n}\bigr)
        = \tau_B\biggl(R,\Delta+\sum_{i=1}^n t_i \prdiv_R(f_{i,1})\biggr).
      \]
  \end{enumerate}
\end{lemma}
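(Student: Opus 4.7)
The proof proceeds in three parts, building on each other. \emph{Part (i)} is a direct unwinding of Definition~\ref{def:ms18like}: the condition ``$h^{1/N} g_1^{1/m_1} \cdots g_n^{1/m_n} \eta = 0$ in $H^d_\fm(B)$'' says exactly that $\eta$ lies in the kernel of multiplication by $h^{1/N} g_1^{1/m_1} \cdots g_n^{1/m_n} \in B$, viewed as a map $H^d_\fm(R) \to H^d_\fm(B)$. Imposing this for every valid choice of $(m_i, g_i)$ reproduces the intersection description, and the $[\underline{f_i}]$ version is identical. The restriction to sufficiently large and divisible $m_i$ is Lemma~\ref{lem:ms18lem36}.

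\emph{Part (ii)} combines (i) with Matlis duality. Since $H^d_\fm(R)$ is Artinian over the complete local ring $(R,\fm)$, the perfect pairing with $\omega_R \cong H^d_\fm(R)^\vee$ coming from local duality induces an order-reversing bijection between submodules of $H^d_\fm(R)$ and submodules of $\omega_R$, which interchanges intersections with sums of annihilators. Applied to the intersection from (i), this yields
\[
  \tau_B\bigl(R,\Delta,\fa_1^{t_1}\fa_2^{t_2}\cdots\fa_n^{t_n}\bigr)
  = \sum \Ann_{\omega_R}\biggl(\ker\Bigl(H^d_\fm(R)
  \xrightarrow{h^{1/N}g_1^{1/m_1}\cdots g_n^{1/m_n}} H^d_\fm(B)\Bigr)\biggr),
\]
summed over $m_i \in \ZZ_{>0}$ and $g_i \in \fa_i^{\lceil m_i t_i \rceil}$. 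It remains to identify each summand with $\tau_B(R,\Delta + \sum \frac{1}{m_i}\prdiv_R(g_i))$. Setting $N' = Nm_1\cdots m_n$ and $h' = h^{m_1\cdots m_n}\prod_i g_i^{Nm_1\cdots \widehat{m_i}\cdots m_n}$, the identities
\[
  K_R + \Delta + \sum_{i=1}^n \tfrac{1}{m_i}\prdiv_R(g_i) = \tfrac{1}{N'}\prdiv_R(h')
  \qquad \text{and} \qquad
  (h')^{1/N'} = h^{1/N}\prod_i g_i^{1/m_i}
\]
show that Definition~\ref{def:ms21def} applied to the divisor $\Delta + \sum \frac{1}{m_i}\prdiv_R(g_i)$ recovers precisely the kernel appearing in the sum. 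The argument for the $[\underline{f_i}]$ formula is the same, with $g_i^{1/m_i}$ replaced by the monomial $\prod_k f_{i,j_k}^{1/m_i}$. The ``large and divisible'' clause follows from (i).

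\emph{Part (iii)} reduces to (ii) by a change of variables. For rational $t_i$, restrict the outer sum in (ii) to $m_i$ divisible by the denominator of $t_i$, so that $\lceil m_i t_i \rceil = m_i t_i$; setting $m_i' = m_i t_i$, the condition $g_i \in \fa_i^{\lceil m_i t_i \rceil}$ becomes $g_i \in \fa_i^{m_i'}$, and the multiplier $\frac{1}{m_i}\prdiv_R(g_i)$ equals $\frac{t_i}{m_i'}\prdiv_R(g_i)$. The index $m_i'$ then ranges only over positive multiples of the numerator of $t_i$, but the sum over these is unchanged upon enlarging to all $m_i' \in \ZZ_{>0}$: given any pair $(m_i', g_i)$ and any positive integer $k$, the replacement $(m_i', g_i) \mapsto (km_i', g_i^k)$ preserves the multiplier and therefore the $\tau_B$-summand, so every omitted index is already represented. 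For the moreover clause with $r_i = 1$ and $\fa_i = (f_{i,1})$, every $g_i \in \fa_i^{m_i'}$ factors as $u_i f_{i,1}^{m_i'}$ with $u_i \in R$, so that $\frac{t_i}{m_i'}\prdiv_R(g_i) = t_i\prdiv_R(f_{i,1}) + \frac{t_i}{m_i'}\prdiv_R(u_i)$; since the $u_i$-term is effective and $\tau_B$ is order-reversing in its divisor argument, every summand is contained in $\tau_B(R,\Delta + \sum t_i\prdiv_R(f_{i,1}))$, with equality attained at $u_i = 1$. The parallel argument for the $[\underline{f_i}]$ version collapses both ideals to $\tau_B(R,\Delta + \sum t_i\prdiv_R(f_{i,1}))$.

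The main obstacle is the Matlis-duality step in part (ii): one must verify that the pairing from local duality converts the (possibly infinite) intersection of kernels in $H^d_\fm(R)$ into the corresponding sum of annihilators in $\omega_R$, and one must carry out the bookkeeping needed to recognize each individual kernel as the $0^B$-module of a modified $\QQ$-Cartier divisor pair. Once these identifications are in hand, parts (i) and (iii) are essentially formal consequences.
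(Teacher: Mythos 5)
Your proposal is correct and follows essentially the same approach as the paper's proof. Part (i) is indeed a direct unwinding of the definition together with Lemma~\ref{lem:ms18lem36}; part (ii) applies $\Ann_{\omega_R}(-)$ to the intersection from (i), recognizing each kernel as the $0^B$-module of the modified divisor pair exactly as you compute; and part (iii) uses the same change of variables and power-raising trick ($g_i \mapsto g_i^{p_i}$ or $g_i^k$) to pass between the two index sets.

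The one point where you flag an ``obstacle'' -- the Matlis duality step in (ii) converting a possibly infinite intersection into a sum of annihilators -- has a clean resolution that the paper records explicitly: the intersection in (i) stabilizes because $H^d_\fm(R)$ is Artinian, and the dual sum is finite because $R$ is Noetherian, so the identity reduces to the finite case where $\Ann$ of an intersection equals the sum of the $\Ann$'s. With that observation your part (ii) is airtight, and your explicit identification of $(h')^{1/N'} = h^{1/N}\prod_i g_i^{1/m_i}$ supplies a detail the paper leaves implicit. Your argument for the ``moreover'' clause in (iii) (factoring $g_i = u_i f_{i,1}^{m_i'}$ and using that $\tau_B$ is order-reversing in the divisor) matches the paper's ``looking at the monomials'' remark in substance.
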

\begin{proof}
  $(\ref{lem:closureasintersect})$ follows from Definition \ref{def:ms18like}.
  The last statement about taking $m_i > 0$ sufficiently large and divisible
  follows from Lemma \ref{lem:ms18lem36}.\smallskip
  \par For $(\ref{lem:taubasasum})$, we note that the
  sums on the right-hand side of \eqref{eq:taubasasumnogens} and
  \eqref{eq:taubasasum}
  are finite sums since $R$ is Noetherian,
  and that the intersection in $(\ref{lem:closureasintersect})$ is a finite
  intersection since $H^d_\fm(R)$ is Artinian \citeleft\citen{Gro67}\citemid
  Proposition 6.4(2)\citepunct \citen{Har68}\citemid
  \S2, Example 1\citepunct \citen{Har70}\citemid Proposition 1.1\citeright.
  Applying $\Ann_{\omega_R}(-)$ to both sides of
  $(\ref{lem:closureasintersect})$, we
  therefore obtain \eqref{eq:taubasasum}.
  The penultimate statement about taking $m_i > 0$ sufficiently large and
  divisible follows from Lemma \ref{lem:ms18lem36} by taking $m_i > 0$
  sufficiently large and divisible in
  $(\ref{lem:closureasintersect})$.\smallskip
  \par For $(\ref{lem:hlscomparison})$, we first show the inclusion $\subseteq$
  in \eqref{eq:hlscomparisonrational}.
  By $(\ref{lem:taubasasum})$, it suffices to show that for $m_i$ sufficiently
  large and divisible, the ideal
  \begin{equation}\label{eq:doesthisappear}
    \tau_B\biggl(R,\Delta+\sum_{i=1}^n \frac{1}{m_i}\prdiv_R(g_i) \biggr)
  \end{equation}
  for $g_i \in \fa_i^{\lceil m_it_i \rceil}$
  appears on the right-hand side of \eqref{eq:hlscomparisonrational}.
  Since $m_i$ is sufficiently large and divisible, we have $m_it_i \in \ZZ$, and
  hence we can write $m_it_i = s_i$ for some integer $s_i \ge 0$.
  We then have
  \[
    \frac{1}{m_i} \prdiv_R(g_i) =
    \frac{t_i}{m_it_i} \prdiv_R(g_i) = \frac{t_i}{s_i} \prdiv_R(g_i),
  \]
  where $g_i \in \fa_i^{\lceil m_it_i \rceil} = \fa_i^{s_i}$.
  Thus, the ideal \eqref{eq:doesthisappear} appears on the right-hand side of
  \eqref{eq:hlscomparisonrational}.
  \par We now show the inclusion $\supseteq$ in
  \eqref{eq:hlscomparisonrational}.
  It suffices to show that the ideal
  \begin{equation}\label{eq:doesthisappear2}
    \tau_B\biggl(R,\Delta+\sum_{i=1}^n \frac{t_i}{m_i} \prdiv_R(g_i)\biggr)
  \end{equation}
  for $g_i = \prod_{k=1}^{a_i} f_{i,j_k}^{1/m_i}$ where $a_i \ge m_it_i$ appears
  on the right-hand side of \eqref{eq:taubasasumnogens}.
  Write $t_i = p_i/q_i$ for integers $p_i,q_i \ge 0$.
  We then have
  \begin{align*}
    \tau_B\biggl(R,\Delta+\sum_{i=1}^n \frac{t_i}{m_i} \prdiv_R(g_i)\biggr)
    &= \tau_B\biggl(R,\Delta+\sum_{i=1}^n \frac{p_i}{m_iq_i}
    \prdiv_R(g_i)\biggr)\\
    &= \tau_B\biggl(R,\Delta+\sum_{i=1}^n \frac{1}{m_iq_i}
    \prdiv_R(g_i^{p_i})\biggr).
  \end{align*}
  Since
  $g_i^{p_i} \in \fa_i^{m_ip_i} =
  \fa_i^{m_iq_it_i} = \fa_i^{\lceil m_iq_it_i \rceil}$,
  we see that the ideal \eqref{eq:doesthisappear2}
  appears as an ideal on the right-hand side of
  \eqref{eq:hlscomparisonrational}.
  \par The last statement in $(\ref{lem:hlscomparison})$
  follows by looking at the monomials appearing in the
  sum on the right-hand sides of \eqref{eq:taubasasum} and
  \eqref{eq:hlscomparisonrational}.
\end{proof}
We use Lemma \ref{lem:testidealasasum} to show the following result.
\begin{proposition}[cf.\ {\citeleft\citen{MS18}\citemid Proposition
  3.9\citepunct \citen{MS}\citemid Lemma 6.6\citeright}]
  \label{prop:taubmultiplied}
  Fix notation as in Notation \ref{notation:ms21like} and Definition
  \ref{def:ms18like}.
  Fix another set of elements $\{f_1,f_2,\ldots,f_r\}$, which we abbreviate by
  $[\underline{f}]$, and denote by $\fa$ the ideal $(f_1,f_2,\ldots,f_r)$.
  We have
  \begin{align*}
    \tau_B\bigl(R,\Delta,\fa_1^{t_1}\fa_2^{t_2}\cdots \fa_n^{t_n}\bigr) \cdot
    (f_{1},f_{2},\ldots,f_{r})
    &\subseteq
    \tau_B\bigl(R,\Delta,\fa_1^{t_1}\fa_2^{t_2}\cdots \fa_n^{t_n}
    \fa^1\bigr),\\
    \tau_B\bigl(R,\Delta,[\underline{f_1}]^{t_1}[\underline{f_2}]^{t_2}\cdots
    [\underline{f_{n}}]^{t_{n}}\bigr) \cdot
    (f_{1},f_{2},\ldots,f_{r})
    &\subseteq
    \tau_B\bigl(R,\Delta,[\underline{f_1}]^{t_1}[\underline{f_2}]^{t_2}\cdots
    [\underline{f_{n}}]^{t_{n}}[\underline{f}]^1\bigr).
  \end{align*}
  In particular, if $(R,\Delta)$ is big Cohen--Macaulay-regular with respect to
  $B$ (which holds for example when $R$ is regular and $\Delta = 0$), then
  \[
    (f_{1},f_{2},\ldots,f_{r}) \subseteq
    \tau_B\bigl(R,\Delta,[\underline{f}]^1\bigr) \subseteq
    \tau_B\bigl(R,\Delta,\fa^1\bigr).
  \]
\end{proposition}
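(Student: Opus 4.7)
The plan is to reduce both inclusions to the case of divisor pairs via the sum expansion of Lemma~\ref{lem:testidealasasum}$(\ref{lem:taubasasum})$, after establishing the following single-factor statement: for any effective $\QQ$-Weil divisor $D$ on $\Spec(R)$ such that $K_R+D$ is $\QQ$-Cartier and any $f \in R$, one has $\tau_B(R, D) \cdot f \subseteq \tau_B(R, D + \prdiv_R(f))$.

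To prove this single-factor statement, write $K_R + D = \frac{1}{N}\prdiv_R(h)$, so that $K_R + D + \prdiv_R(f) = \frac{1}{N}\prdiv_R(hf^N)$. By Definition~\ref{def:ms21def}, this gives
\[
  0^{B, K_R+D+\prdiv_R(f)}_{H^d_\fm(R)} = \ker\Bigl(H^d_\fm(R) \xrightarrow{h^{1/N}f} H^d_\fm(B)\Bigr) = \bigl\{\eta \in H^d_\fm(R) : f\eta \in 0^{B, K_R+D}_{H^d_\fm(R)}\bigr\}.
\]
Hence if $x \in \tau_B(R, D)$ and $\eta \in 0^{B, K_R+D+\prdiv_R(f)}_{H^d_\fm(R)}$, then $(fx) \cdot \eta = x \cdot (f\eta) = 0$, so $fx \in \tau_B(R, D + \prdiv_R(f))$.

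The first inclusion of the proposition now follows by applying \eqref{eq:taubasasumnogens} to rewrite $\tau_B(R,\Delta,\fa_1^{t_1}\fa_2^{t_2}\cdots\fa_n^{t_n})$ as a finite sum of ideals of the form $\tau_B(R, D_{\underline m,\underline g})$ where $D_{\underline m,\underline g} = \Delta + \sum_{i=1}^n \frac{1}{m_i}\prdiv_R(g_i)$ with $g_i \in \fa_i^{\lceil m_i t_i\rceil}$. Multiplying by $f_j$ and applying the single-factor statement, each summand lands in $\tau_B(R, D_{\underline m,\underline g} + \prdiv_R(f_j))$, which appears as a summand on the right-hand side of \eqref{eq:taubasasumnogens} for $\tau_B(R,\Delta,\fa_1^{t_1}\fa_2^{t_2}\cdots\fa_n^{t_n}\fa^1)$ via the choice $m = 1$, $g = f_j \in \fa^{\lceil 1 \cdot 1\rceil}$. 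An entirely parallel argument using \eqref{eq:taubasasum} and the choice $m = 1$, $a = 1$, $f_{j_1} = f_j$ establishes the second inclusion.

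For the ``in particular'' statement, setting $n = 0$ in the inclusions just proved gives $\tau_B(R,\Delta) \cdot (f_1, \ldots, f_r)$ contained in $\tau_B(R,\Delta,[\underline f]^1)$ and, via Proposition~\ref{prop:ms18comparison}, also in $\tau_B(R,\Delta,\fa^1)$. Under the big Cohen--Macaulay-regularity hypothesis on $(R,\Delta)$ with respect to $B$ we have $\tau_B(R,\Delta) = R$, yielding the desired chain of inclusions; the case $R$ regular and $\Delta = 0$ follows because then $\omega_R = R$ and $H^d_\fm(R) \to H^d_\fm(B)$ is injective by faithful flatness. The only substantive content of the whole argument is the single-factor statement; everything else is bookkeeping with the sum formulas in Lemma~\ref{lem:testidealasasum}, and the main place one must be careful is verifying that passing from $\eta \in 0^{B,K_R+D+\prdiv_R(f)}$ to $f\eta \in 0^{B,K_R+D}$ really is an equivalence and not merely an implication.
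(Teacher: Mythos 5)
Your proof is correct and follows the same overall strategy as the paper's: expand both sides via Lemma~\ref{lem:testidealasasum}$(\ref{lem:taubasasum})$ and match up summands, noting that $m=1$, $g=f_j$ (resp.\ $m=1$, $a=1$) realizes $\tau_B(R,D_{\underline m,\underline g}+\prdiv_R(f_j))$ as a summand on the right-hand side. Where you genuinely differ is the single-factor step: the paper invokes the Skoda-type \emph{equality} $\tau_B(R,D)\cdot f = \tau_B\bigl(R,D+\prdiv_R(f)\bigr)$ as a citation to \cite[Lemma 6.6]{MS}, whereas you prove the inclusion $\tau_B(R,D)\cdot f \subseteq \tau_B\bigl(R,D+\prdiv_R(f)\bigr)$ directly from Definition~\ref{def:ms21def}, using $K_R+D+\prdiv_R(f)=\frac{1}{N}\prdiv_R(hf^N)$ to identify $0^{B,K_R+D+\prdiv_R(f)}_{H^d_\fm(R)}$ with $\bigl\{\eta : f\eta\in 0^{B,K_R+D}_{H^d_\fm(R)}\bigr\}$. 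This makes your proof more self-contained, and the inclusion is all the proposition needs; the cited Lemma~6.6 yields the stronger equality, which is not required here. One small remark: your closing caveat is unnecessary---the deduction that $fx$ annihilates $0^{B,K_R+D+\prdiv_R(f)}_{H^d_\fm(R)}$ only uses the implication $\eta\in 0^{B,K_R+D+\prdiv_R(f)}_{H^d_\fm(R)}\Rightarrow f\eta\in 0^{B,K_R+D}_{H^d_\fm(R)}$, not the reverse, so you do not in fact need the equivalence (though you did establish it).
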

We recall from \cite[Definition 8.9]{MS} that a pair $(R,\Delta)$ as in Notation
\ref{notation:ms21like} is \textsl{big Cohen--Macaulay-regular} with respect to
a big Cohen--Macaulay $R^+$-algebra $B$ if $\tau_B(R,\Delta) = R$.
\begin{proof}
  By Lemma \ref{lem:testidealasasum}$(\ref{lem:taubasasum})$, we have
  \begin{align*}
    \MoveEqLeft[5]
    \tau_B\bigl(R,\Delta,\fa_1^{t_1}\fa_2^{t_2}\cdots \fa_n^{t_n}\bigr) \cdot
    (f_{1},f_{2},\ldots,f_{r})\\
    &= \sum_{j=1}^{r}
    \tau_B\bigl(R,\Delta,\fa_1^{t_1}\fa_2^{t_2}\cdots \fa_n^{t_n}\bigr) \cdot
    f_j\\
    &= \sum_{j=1}^{r} 
    \sum_{m_1,m_2,\ldots,m_n \in
    \ZZ_{>0}} \sum_{g_i \in \fa_i^{\lceil m_it_i \rceil}}
    \tau_B\biggl(R,\Delta+\sum_{i=1}^n \frac{1}{m_i}
    \prdiv_R(g_i) \biggr) \cdot f_j\\
    &= \sum_{j=1}^{r} 
    \sum_{m_1,m_2,\ldots,m_n \in
    \ZZ_{>0}} \sum_{g_i \in \fa_i^{\lceil m_it_i \rceil}}
    \tau_B\biggl(R,\Delta+\sum_{i=1}^n \frac{1}{m_i}
    \prdiv_R(g_i) + \prdiv_R(f_j)\biggr)\displaybreak[1]\\
    \MoveEqLeft[5]
    \tau_B\bigl(R,\Delta,[\underline{f_1}]^{t_1}[\underline{f_2}]^{t_2}\cdots
    [\underline{f_{n}}]^{t_{n}}\bigr) \cdot
    (f_{1},f_{2},\ldots,f_{r})\\
    &= \sum_{j=1}^{r}
    \tau_B\bigl(R,\Delta,[\underline{f_1}]^{t_1}[\underline{f_2}]^{t_2}\cdots
    [\underline{f_{n}}]^{t_{n}}\bigr) \cdot f_j\\
    &= \sum_{j=1}^{r} 
    \sum_{m_1,m_2,\ldots,m_n \in
    \ZZ_{>0}} \sum_{\substack{g_i = \prod_{k=1}^{a_i} f_{i,j_k}^{1/m_i}
    \\\text{where}\ a_i \ge m_it_i}}
    \tau_B\biggl(R,\Delta+\sum_{i=1}^n \prdiv_R(g_i) \biggr) \cdot f_j\\
    &= \sum_{j=1}^{r} 
    \sum_{m_1,m_2,\ldots,m_n \in
    \ZZ_{>0}} \sum_{\substack{g_i = \prod_{k=1}^{a_i} f_{i,j_k}^{1/m_i}
    \\\text{where}\ a_i \ge m_it_i}}
    \tau_B\biggl(R,\Delta+\sum_{i=1}^n \prdiv_R(g_i) + \prdiv_R(f_j) \biggr)
  \end{align*}
  where the last equality in each case holds by
  the Skoda-type result in \cite[Lemma 6.6]{MS}.
  Applying Lemma \ref{lem:testidealasasum}$(\ref{lem:taubasasum})$ again, we see
  that these ideals are contained in
  $\tau_B(R,\Delta,\fa_1^{t_1}\fa_2^{t_2}\cdots \fa_n^{t_n}
  \fa^1)$ and
  $\tau_B(R,\Delta,[\underline{f_1}]^{t_1}[\underline{f_2}]^{t_2}\cdots
  [\underline{f_{n}}]^{t_{n}}[\underline{f}]^1)$, respectively.
  \par The assertion for big Cohen--Macaulay-regular pairs $(R,\Delta)$ now
  follows because $\tau_B(R,\Delta) = R$ by definition in \cite[Definition
  8.9]{MS}.
  Pairs $(R,\Delta)$ where $R$ is regular and $\Delta = 0$
  are big
  Cohen--Macaulay-regular with respect to all big Cohen--Macaulay $R^+$-algebras
  $B$ since
  $R \to B$ is faithfully flat
  \cite[Lemma 5.5]{Hoc75queens} (see also \citeleft\citen{HH92}\citemid
  (6.7)\citepunct \citen{HH95}\citemid Lemma 2.1$(d)$\citeright), and hence
  $H^d_\fm(R) \to H^d_\fm(B)$ is injective.
\end{proof}
\subsection{Unambiguity of exponents}
We next prove the following unambiguity-type statement for exponents for the
test ideals $\tau_B(R,\Delta,[\underline{f_1}]^{t_1}[\underline{f_2}]^{t_2} 
\cdots [\underline{f_n}]^{t_n})$, which is
stronger than that proved for the perfectoid test ideals in \cite{MS18}.
This statement does not need the perturbations present in \cite[Definition 3.5
and Proposition 3.8]{MS18} because we do not have the extra factor
$p^{1/p^\infty}$ coming from almost mathematics in \cite[Definitions 3.1 and
4.1]{MS18}, and because
we can clear the denominators in $s_1,s_2$ after passing to sufficiently large
and divisible $m_1,m_2$ using Lemma \ref{lem:ms18lem36}, at least when $s_1,s_2$
are rational.
\begin{proposition}[{cf.\ \cite[Proposition 3.8]{MS18}}]
  \label{prop:notambig}
  Fix notation as in Notation \ref{notation:ms21like} and Definition
  \ref{def:ms18like}.
  Fix another set of elements $\{f_1,f_2,\ldots,f_r\}$, which we abbreviate by
  $[\underline{f}]$, and denote by $\fa$ the ideal $(f_1,f_2,\ldots,f_r)$.
  For all real numbers $s_1,s_2 \ge 0$, we have
  \begin{align}
    \tau_B\bigl(R,\Delta,\fa_1^{t_1}\fa_2^{t_2}\cdots \fa_n^{t_n}
    \fa^{s_1}\fa^{s_2}\bigr)
    &\subseteq
    \tau_B\bigl(R,\Delta,\fa_1^{t_1}\fa_2^{t_2}\cdots \fa_n^{t_n}
    \fa^{s_1+s_2}\bigr)\nonumber\\
    \tau_B\bigl(R,\Delta,[\underline{f_1}]^{t_1}[\underline{f_2}]^{t_2} \cdots
    [\underline{f_n}]^{t_n}[\underline{f}]^{s_1}[\underline{f}]^{s_2}\bigr)
    &\subseteq
    \tau_B\bigl(R,\Delta,[\underline{f_1}]^{t_1}[\underline{f_2}]^{t_2} \cdots
    [\underline{f_n}]^{t_n}[\underline{f}]^{s_1+s_2}\bigr)
    \label{eq:notambiggens}
  \end{align}
  with equality in \eqref{eq:notambiggens} if $s_1$ and $s_2$ are rational.
\end{proposition}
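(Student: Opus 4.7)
My plan is to dualize via $\Ann_{\omega_R}(-)$ and argue entirely at the level of the submodules $0^B$ of $H^d_\fm(R)$ from Definition \ref{def:ms18like}. The claimed containments of $\tau_B$ ideals translate to the reverse containments
\begin{align*}
  0^{B,K_R+\Delta,\fa_1^{t_1}\cdots\fa_n^{t_n}\fa^{s_1+s_2}}_{H^d_\fm(R)} &\subseteq 0^{B,K_R+\Delta,\fa_1^{t_1}\cdots\fa_n^{t_n}\fa^{s_1}\fa^{s_2}}_{H^d_\fm(R)},\\
  0^{B,K_R+\Delta,[\underline{f_1}]^{t_1}\cdots[\underline{f_n}]^{t_n}[\underline{f}]^{s_1+s_2}}_{H^d_\fm(R)} &\subseteq 0^{B,K_R+\Delta,[\underline{f_1}]^{t_1}\cdots[\underline{f_n}]^{t_n}[\underline{f}]^{s_1}[\underline{f}]^{s_2}}_{H^d_\fm(R)},
\end{align*}
with the extra equality to be shown in the $[\underline{f}]$-case under the rationality hypothesis on $s_1,s_2$.

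For the forward inclusion in the $\fa$-case, I would take $\eta$ in the left-hand module together with a putative product witness $h^{1/N}g_1^{1/m_1}\cdots g_n^{1/m_n}g^{1/m'}g'^{1/m''}$ with $g\in\fa^{\lceil m's_1\rceil}$ and $g'\in\fa^{\lceil m''s_2\rceil}$, and merge the last two factors at level $m\coloneqq m'm''$ by setting $\tilde g\coloneqq g^{m''}g'^{m'}$, which gives $g^{1/m'}g'^{1/m''}=\tilde g^{1/m}$ inside $B$. The elementary estimate
\[
  m''\lceil m's_1\rceil+m'\lceil m''s_2\rceil\ge m'm''(s_1+s_2)=m(s_1+s_2)
\]
certifies that $\tilde g\in\fa^{\lceil m(s_1+s_2)\rceil}$, so $\tilde g$ is a valid witness for the $\fa^{s_1+s_2}$-condition and the required killing follows. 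The $[\underline{f}]$-case is structurally identical once one uses the root identity $f_l^{1/m'}=(f_l^{1/m})^{m''}$ (and symmetrically $f_l^{1/m''}=(f_l^{1/m})^{m'}$) to expand $g\cdot g'$ into a single length-$c$ product of $m$-th roots of generators with $c=m''b_1+m'b_2\ge m(s_1+s_2)$.

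For the reverse inclusion in the $[\underline{f}]$-case when $s_1,s_2\in\QQ$, I would invoke Lemma \ref{lem:ms18lem36} to restrict the defining conditions of $0^B$ to $m$ sufficiently large and divisible; choosing $m$ to be a common multiple of the denominators of $s_1$ and $s_2$ makes both $ms_1$ and $ms_2$ into honest non-negative integers. Then any single witness $\tilde g=\prod_{k=1}^{c}f_{l_k}^{1/m}$ with $c\ge m(s_1+s_2)=ms_1+ms_2$ admits a partition of its index set into two blocks of sizes $b_1\coloneqq ms_1$ and $b_2\coloneqq c-ms_1\ge ms_2$, and the corresponding subproducts supply witnesses $g,g'$ at level $m'=m''=m$ for the $[\underline{f}]^{s_1}[\underline{f}]^{s_2}$-condition with $\tilde g=g\cdot g'$, forcing the desired vanishing in $H^d_\fm(B)$.

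The main obstacle lies precisely in this last step: for irrational $s_i$ the discrepancy $\lceil ms_1\rceil+\lceil ms_2\rceil>\lceil m(s_1+s_2)\rceil$ can persist for every $m$, and then a generic length-$\lceil m(s_1+s_2)\rceil$ witness $\tilde g$ cannot be partitioned to meet both integer-length constraints at once; Lemma \ref{lem:ms18lem36} only neutralizes this rounding defect under the rationality hypothesis. The same phenomenon explains why one should not expect an equality in the $\fa$-case even for rational exponents: splitting a single root $\tilde g^{1/m}$ as $g^{1/m'}g'^{1/m''}$ demands a multiplicative factorization $\tilde g=g\cdot g'$ inside $\fa^{\lceil m(s_1+s_2)\rceil}$ with prescribed ideal memberships, which ideal-membership alone does not deliver without the combinatorial structure of a chosen generating set.
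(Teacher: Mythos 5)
Your proof is correct and uses essentially the same approach as the paper's: dualize to the submodules $0^B$ of local cohomology, merge two witnesses at levels $m'$, $m''$ into one at level $m'm''$ via $g^{1/m'}g'^{1/m''} = (g^{m''}g'^{m'})^{1/(m'm'')}$ for the forward inclusion, and in the $[\underline{f}]$-case with rational $s_i$ invoke Lemma~\ref{lem:ms18lem36} to restrict to $m$ divisible enough that $ms_1,ms_2\in\ZZ$, allowing a single witness product to be split. The only cosmetic difference is that the paper first reduces to the case $n=0$ via Lemma~\ref{lem:testidealasasum}$(\ref{lem:taubasasum})$ (absorbing the $\fa_i$/$[\underline{f_i}]$ factors into a divisor $\Delta'$), whereas you carry the passenger factors $g_1^{1/m_1}\cdots g_n^{1/m_n}$ along unchanged, which is equally valid.
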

\begin{proof}
  Fix $m_1,m_2,\ldots,m_{n}$ and $g_1,g_2,\ldots,g_{n}$
  as in Lemma \ref{lem:testidealasasum}$(\ref{lem:taubasasum})$,
  and set
  \[
    \Delta' = \sum_{i=1}^{n} \prdiv_R(g_i).
  \]
  By Lemma \ref{lem:testidealasasum}$(\ref{lem:taubasasum})$,
  it suffices to show that
  \begin{align}
    \tau_B\bigl(R,\Delta+\Delta',\fa^{s_{1}}
    \fa^{s_{2}}\bigr)
    &\subseteq
    \tau_B\bigl(R,\Delta+\Delta',\fa^{s_1+s_2}\bigr)\nonumber\\
    \tau_B\bigl(R,\Delta+\Delta',[\underline{f}]^{s_{1}}
    [\underline{f}]^{s_{2}}\bigr)
    &\subseteq
    \tau_B\bigl(R,\Delta+\Delta',[\underline{f}]^{s_1+s_2}\bigr)\nonumber
  \intertext{with equality if $s_1$ and $s_2$ are rational.
  Replacing $\Delta$ with $\Delta+\Delta'$,
  it therefore suffices to prove the case when there are no
  $[\underline{f_i}]$.
  By Definition \ref{def:ms18like}, it moreover suffices to show that}
    0_{H^d_\fm(R)}^{B,K_R+\Delta,\fa^{s_{1}} \fa^{s_{2}}}
    &\supseteq 0_{H^d_\fm(R)}^{B,K_R+\Delta,\fa^{s_1+s_2}}
    \label{eq:notambigclosuresnogens}
    \\
    0_{H^d_\fm(R)}^{B,K_R+\Delta,[\underline{f}]^{s_{1}}
    [\underline{f}]^{s_{2}}}
    &\supseteq 0_{H^d_\fm(R)}^{B,K_R+\Delta,[\underline{f}]^{s_1+s_2}}
    \label{eq:notambigclosures}
  \end{align}
  with equality in \eqref{eq:notambigclosures} if $s_1$ and $s_2$ are rational.
  \par For the inclusion $\supseteq$ in \eqref{eq:notambigclosuresnogens},
  suppose that $\eta \in 0_{H^d_\fm(R)}^{B,K_R+\Delta,\fa^{s_1+s_2}}$.
  It suffices to show that for all $m_1,m_2 > 0$, we have
  $h^{1/N}g_1^{1/m_1}g_2^{1/m_2}\eta = 0$ in $H^d_\fm(B)$ for all choices of
  elements $g_i \in \fa_i^{\lceil m_is_i \rceil}$.
  Writing
  \[
    g_1^{1/m_1}g_2^{1/m_2} = g_1^{m_2/(m_1m_2)}g_2^{m_1/(m_1m_2)}
    = (g_1^{m_2}g_2^{m_1})^{1/(m_1m_2)},
  \]
  we see that $g_1^{1/m_1}g_2^{1/m_2}$ can be written as the $(m_1m_2)$-th root
  of the element
  \[
    g = g_1^{m_2}g_2^{m_1} \in
    \fa^{m_2\lceil m_1s_1 \rceil+m_1\lceil m_2s_2 \rceil}
    \subseteq \fa^{\lceil m_1m_2s_1 \rceil+\lceil m_1m_2s_2 \rceil} \subseteq
    \fa^{\lceil m_1m_2(s_1 + s_2) \rceil}.
  \]
  We therefore see that $h^{1/N}g_1^{1/m_1}g_2^{1/m_2}\eta =
  h^{1/N}g^{1/(m_1m_2)} = 0$.
  \par For the inclusion $\supseteq$ in \eqref{eq:notambigclosures},
  suppose that $\eta \in
  0_{H^d_\fm(R)}^{B,K_R+\Delta,[\underline{f}]^{s_1+s_2}}$.
  It suffices to show that for all $m_1,m_2 > 0$,
  we have $f^{1/N}g_1g_2\eta = 0$ in $H^d_\fm(B)$ for all choices of
  elements $g_i = \prod_{k_i=1}^{a_i} f_{i,j_k}^{1/m_i}$ where $a_i \ge
  m_is_i$.
  Writing
  \[
    g_1g_2 = \biggl(\prod_{k_1=1}^{a_1}
    \bigl(f_{j_{k_1}}^{m_2}\bigr)^{1/(m_1m_2)}
    \biggr)\biggl( \prod_{k_2=1}^{a_2} \bigl(f_{j_{k_2}}^{m_1}
    \bigr)^{1/(m_1m_2)}
    \biggr),
  \]
  we see that $g_1g_2$ can be written as a product of $a_1m_2+a_2m_1$ choices of
  $(m_1m_2)$-th roots of elements corresponding to $[\underline{f}]$, where
  \[
    a_1m_2+a_2m_1 \ge m_1s_1m_2 + m_2s_2m_1 = m_1m_2(s_1+s_2).
  \]
  Writing this product as
  \[
    g = g_1g_2 = \prod_{k=1}^{a_1m_2+a_2m_1} f_{j_k}^{1/(m_1m_2)},
  \]
  we see that $h^{1/N}g_1g_2\eta = h^{1/N}g\eta = 0$.
  \par We now show the inclusion $\subseteq$ in \eqref{eq:notambigclosures}
  assuming that $s_1$ and $s_2$ are rational.
  Suppose that $\eta \in
  0_{H^d_\fm(R)}^{B,K_R+\Delta,[\underline{f_{1}}]^{s_{1}}
  [\underline{f_{2}}]^{s_{2}}}$.
  By Lemma \ref{lem:ms18lem36},
  it suffices to show that for all sufficiently large and divisible
  $m > 0$, we have
  $h^{1/N}g \eta = 0$
  in $H^d_\fm(B)$ for all choices of elements $g \in R$ which are expressed as a
  product of $a$ choices of $m$-th roots of
  elements corresponding to $[\underline{f}]$, where $a \ge m(s_1+s_2)$.
  By ranging over $m$ divisible enough, we may assume that $ms_1$ and $ms_2$ are
  integers, in which case we may write
  $a = a_1+a_2$ where $a_1 \ge ms_1$ and
  $a_2 \ge ms_2$.
  Write
  \begin{align*}
    g = \biggl( \prod_{k_1=1}^{a_1} f_{j_{k_1}}^{1/m} \biggr) {}&{}
    \biggl( \prod_{k_2=1}^{a_2} f_{j_{k_2}}^{1/m} \biggr).
  \intertext{Thus, we have}
    h^{1/N}g \eta = h^{1/N}
    \biggl( \prod_{k_1=1}^{a_1} f_{j_{k_1}}^{1/m} \biggr) {}&{}
    \biggl( \prod_{k_2=1}^{a_2} f_{j_{k_2}}^{1/m} \biggr)\eta = 0.\qedhere
  \end{align*}
\end{proof}
We note that the unambiguity statement in \cite[Proposition 3.7]{MS18} for
perfectoid test ideals holds for
the test ideal $\tau_B(R,\Delta,\fa_1^{t_1}\fa_2^{t_2}\cdots\fa_n^{t_n})$ as
well.
\begin{proposition}[cf.\ {\cite[Proposition
  3.7]{MS18}}]\label{prop:notambignogens}
  Fix notation as in Notation \ref{notation:ms21like} and Definition
  \ref{def:ms18like}.
  Fix another set of elements $\{f_1,f_2,\ldots,f_r\}$ and a real number
  $s \ge 0$, and denote by $\fa$ the ideal $(f_1,f_2,\ldots,f_r)$.
  We have
  \[
    \tau_B\bigl(R,\Delta,\fa_1^{t_1}\fa_2^{t_2}\cdots\fa_n^{t_n}(\fa^k)^s\bigr)
    \subseteq
    \tau_B\bigl(R,\Delta,\fa_1^{t_1}\fa_2^{t_2}\cdots\fa_n^{t_n}\fa^{ks}\bigr)
  \]
  for every integer $k \ge 0$, with equality if $s$ is rational.
\end{proposition}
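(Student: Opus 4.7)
The plan is to translate the statement into an assertion about the modules $0^{B,\ldots}_{H^d_\fm(R)}$ of Definition \ref{def:ms18like}. Taking annihilators in $\omega_R$ reverses containments, so the asserted inclusion of test ideals is equivalent to
\[
0^{B,K_R+\Delta,\fa_1^{t_1}\cdots\fa_n^{t_n}(\fa^k)^s}_{H^d_\fm(R)}
\supseteq
0^{B,K_R+\Delta,\fa_1^{t_1}\cdots\fa_n^{t_n}\fa^{ks}}_{H^d_\fm(R)},
\]
and the rational-case equality is equivalent to equality of these modules. Because the factors $\fa_1^{t_1}\cdots\fa_n^{t_n}$ contribute identical conditions on both sides (by Lemma \ref{lem:testidealasasum}$(\ref{lem:closureasintersect})$ we may fix arbitrary witnesses $m_i$ and $g_i \in \fa_i^{\lceil m_it_i\rceil}$ once and for all), the problem collapses to comparing the single condition coming from the remaining $\fa$-factor.

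For the always-true direction I would exploit the elementary identity $(\fa^k)^a = \fa^{ka}$ for positive integer exponents $a$, combined with the estimate $k\lceil ms\rceil \ge \lceil kms\rceil$ (which holds because $k\lceil ms\rceil$ is an integer bounded below by $kms$). These together yield
\[
(\fa^k)^{\lceil ms\rceil} = \fa^{k\lceil ms\rceil} \subseteq \fa^{\lceil mks\rceil},
\]
so any $g \in (\fa^k)^{\lceil ms\rceil}$ is automatically a valid witness for the $\fa^{ks}$-version at the same denominator $m$. Consequently, any $\eta$ annihilated as the $\fa^{ks}$-version demands is also annihilated as the $(\fa^k)^s$-version demands, giving the desired containment of $0^B$-modules.

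For the reverse containment when $s = p/q$ is rational, I would invoke Lemma \ref{lem:ms18lem36} to restrict attention to $m$ divisible by $q$. For such $m$ both $ms$ and $mks$ are integers, so the containment in the previous display becomes an equality
\[
(\fa^k)^{\lceil ms\rceil} = \fa^{kms} = \fa^{\lceil mks\rceil}.
\]
Hence the two defining conditions coincide for all sufficiently divisible $m$, which by Lemma \ref{lem:ms18lem36} is enough to conclude equality of the $0^B$-modules and so of the corresponding test ideals.

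The entire argument should be arithmetic of ceiling functions and powers of ideals, and I do not anticipate any serious obstacle. The one conceptual point worth highlighting is that the rationality of $s$ enters exactly to let Lemma \ref{lem:ms18lem36} clear the denominator $q$, converting the ceiling inequality $k\lceil ms\rceil \ge \lceil kms\rceil$ into an equality and thereby upgrading the one-sided containment of test ideals to an equality.
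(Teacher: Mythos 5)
Your proof is correct and follows essentially the same route as the paper: reduce to the containment of $0^B$-modules, use the inclusion $(\fa^k)^{\lceil ms\rceil} = \fa^{k\lceil ms\rceil}\subseteq \fa^{\lceil mks\rceil}$, and invoke Lemma \ref{lem:ms18lem36} to clear denominators when $s$ is rational so the inclusion becomes an equality. The extra explanatory remarks you include (fixing witnesses for the $\fa_i$-factors, the explicit ceiling estimate) are consistent with what the paper leaves implicit.
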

\begin{proof}
  It suffices to show
  \[
    0^{B,K_R+\Delta,\fa_1^{t_1}\fa_2^{t_2}\cdots
    \fa_n^{t_n}(\fa^k)^s}_{H^d_\fm(R)} \supseteq
    0^{B,K_R+\Delta,\fa_1^{t_1}\fa_2^{t_2}\cdots \fa_n^{t_n}
    \fa^{ks}}_{H^d_\fm(R)}
  \]
  with equality when $s$ is rational.
  The inclusion $\supseteq$ holds since
  $(\fa^k)^{\lceil ms \rceil} = \fa^{k\lceil ms \rceil} \subseteq
  \fa^{\lceil mks \rceil}$
  for every $m$.
  When $s$ is rational, by Lemma \ref{lem:ms18lem36}, we may restrict to all $m$
  sufficiently large and divisible such that $ms$ is an integer.
  In this case, the inclusion $\fa^{k\lceil ms \rceil} \subseteq
  \fa^{\lceil mks \rceil}$ is an equality.
\end{proof}

\subsection{Subadditivity}
We now use the
strategy in \cite[Theorem 4.4]{MS18} (which is in turn based on the strategies
in \cite[Theorem 2.4]{Tak06} and \cite[Proposition 2.11$(iv)$]{BMS08})
to prove a stronger version of the subadditivity theorem for the big
Cohen--Macaulay test ideals
$\tau_B(R,[\underline{f}]^t)$ than is proved for perfectoid test ideals in
\cite{MS18}.
\begin{theorem}[{cf.\ \cite[Theorem 4.4]{MS18}}]\label{thm:subadditivity}
  Let $(R,\fm)$ be a complete regular local ring.
  Fix sets
  \[
    \{f_{i,1},f_{i,2},\ldots,f_{i,r_i}\}
  \]
  of elements in $R$ for $i \in \{1,2,\ldots,n\}$,
  and fix real numbers $t_1,t_2,\ldots,t_n \ge 0$.
  For every big Cohen--Macaulay $R^+$-algebra $B$, we have
  \begin{equation}\label{eq:mainsubadditivity}
    \begin{aligned}
      \MoveEqLeft[4]
      \tau_B\bigl(R,[\underline{f_1}]^{t_1}[\underline{f_2}]^{t_2}\cdots
      [\underline{f_n}]^{t_n}\bigr)\\
      &\subseteq
      \tau_B\bigl(R,[\underline{f_1}]^{t_1}[\underline{f_2}]^{t_2}\cdots
      [\underline{f_{i_0}}]^{t_{i_0}}\bigr) \cdot
      \tau_B\bigl(R,[\underline{f_{i_0+1}}]^{t_{i_0+1}}
        [\underline{f_{i_0+2}}]^{t_{i_0+2}}\cdots
      [\underline{f_n}]^{t_n}\bigr)
    \end{aligned}
  \end{equation}
  for every $i_0 \in \{1,2,\ldots,n-1\}$, where we define $\tau_B$
  using the choice of canonical divisor $K_R = 0$.
  In particular, if $\Delta_1$ and $\Delta_2$ are effective
  $\QQ$-divisors on $X = \Spec(R)$, we have
  \begin{align*}
    \tau_B(R,\Delta_1+\Delta_2)
    &\subseteq
    \tau_B(R,\Delta_1) \cdot
    \tau_B(R,\Delta_2)\\
    \tau_+(\cO_X,\Delta_1+\Delta_2)
    &\subseteq
    \tau_+(\cO_X,\Delta_1) \cdot
    \tau_+(\cO_X,\Delta_2).
  \end{align*}
\end{theorem}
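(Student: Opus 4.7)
My plan is to adapt the diagonal-embedding strategy of \cite[Proposition 2.11$(iv)$]{BMS08}, \cite[Theorem 2.4]{Tak06}, and \cite[Theorem 4.4]{MS18} to Robinson's big Cohen--Macaulay test ideals. Using Lemma \ref{lem:testidealasasum}$(\ref{lem:taubasasum})$, each side of \eqref{eq:mainsubadditivity} is a finite sum of $\tau_B(R,\Delta)$'s for appropriate $\QQ$-divisors built out of fractional monomials in the $f_{i,\ell}$; since ideal multiplication distributes over sums, it is enough to establish the divisor-pair version
\[
\tau_B(R,\Delta_1+\Delta_2) \subseteq \tau_B(R,\Delta_1)\cdot \tau_B(R,\Delta_2)
\]
and then sum. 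The $+$-test ideal consequence is then formal from the identification $\tau_{\widehat{R^+}}(R,\Delta) = \tau_+(\cO_X,\Delta)$ in Remark \ref{rem:hls512} (in residue characteristic $p>0$), with the equal characteristic zero case handled by the comparison with multiplier ideals referred to as Theorem \ref{thm:compmultideals}.

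To prove the divisor form, I would first realize $R$ diagonally inside a larger regular local ring $S$. By the Cohen structure theorem, write $R = A[[x_1,\ldots,x_d]]$ where $A$ is a field or a complete DVR, and set $S = R\mathbin{\widehat{\otimes}}_A R$. Then $S$ is a complete regular local ring; the two coordinate inclusions $\iota_1,\iota_2\colon R\hookrightarrow S$ dualize to projections $p_1,p_2\colon\Spec S\to\Spec R$, and the multiplication map $\mu\colon S\twoheadrightarrow R$ is a surjection whose kernel is generated by the regular sequence $(\iota_1(x_j)-\iota_2(x_j))_{j=1}^d$. Given the BCM $R^+$-algebra $B$, the second preparatory step is to produce an $S^+$-algebra $C$ that is big Cohen--Macaulay over $S$ and admits $R$-algebra maps from $B$ compatibly through both $\iota_1$ and $\iota_2$: in residue characteristic $p>0$ one may take $C=\widehat{S^+}$ by Remark \ref{rem:bigcmalgexist}$(\ref{rem:bigcmalgexistrplusmixedchar})$, while in equal characteristic zero Theorem \ref{thm:weaklyfunctorialequalchar0} supplies such a $C$ via its weak functoriality applied along the two factor maps.

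The heart of the argument is to establish a K\"unneth-style factorization
\[
\tau_C\bigl(S,\,p_1^*\Delta_1+p_2^*\Delta_2\bigr) \supseteq \bigl(\iota_1(\tau_B(R,\Delta_1))\cdot S\bigr)\cdot\bigl(\iota_2(\tau_B(R,\Delta_2))\cdot S\bigr)
\]
in $S$, by decoupling the annihilator conditions in Definition \ref{def:ms21def} along the two tensor factors of the top local cohomology of $S$. This is the step I expect to be the main obstacle: I must argue that $\QQ$-Cartier witnesses $h_1^{1/N_1},h_2^{1/N_2}$ for $K_R+\Delta_1$ and $K_R+\Delta_2$ act on $H^{\dim S}_{\fm_S}(S)$ in the two independent directions supplied by the product structure of $S$ over $A$, and that $C$ is ``large enough'' to witness both BCM structures simultaneously so that the inclusion above cannot be strict in the wrong direction. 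This relies on the faithful flatness of $R\to B$ given by \cite[Lemma 5.5]{Hoc75queens} (which applies since $R$ is regular) together with a careful Matlis-duality computation comparing the annihilator defined by products of witnesses to the product of the factorwise annihilators.

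To descend back to $R$, I would apply $\mu$. Since $\ker(\mu)$ is generated by a regular sequence, the quotient $C/\ker(\mu)C$ is big Cohen--Macaulay over $R$ and carries a canonical $R$-algebra map from $B$; combined with the identification $\mu^*(p_1^*\Delta_1+p_2^*\Delta_2) = \Delta_1+\Delta_2$ on $R$ and a colon-capturing argument on $S$, this yields
\[
\mu\bigl(\tau_C(S,\,p_1^*\Delta_1+p_2^*\Delta_2)\bigr) \supseteq \tau_B(R,\Delta_1+\Delta_2).
\]
Combining this with the K\"unneth factorization and the observation that $\mu$ collapses products $\iota_1(x)\cdot\iota_2(y)$ to $xy$ in $R$ then gives $\tau_B(R,\Delta_1+\Delta_2)\subseteq \tau_B(R,\Delta_1)\cdot \tau_B(R,\Delta_2)$. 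Specializing via Lemma \ref{lem:testidealasasum}$(\ref{lem:taubasasum})$ and summing over all admissible choices of fractional monomials recovers \eqref{eq:mainsubadditivity}, and the two divisor-pair corollaries (for $\tau_B$ and for $\tau_+$) follow as indicated above.
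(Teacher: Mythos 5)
Your proposed strategy is a genuinely different route from the one the paper takes, and it mirrors the diagonal-embedding proof of subadditivity for multiplier ideals in \cite{DEL00}. Unfortunately, as written, the two halves of the argument do not link up, and the hardest step is left unaddressed.

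First, a logical issue with the direction of the Künneth inclusion. To deduce $\tau_B(R,\Delta_1+\Delta_2)\subseteq\tau_B(R,\Delta_1)\cdot\tau_B(R,\Delta_2)$ from the two intermediate statements you propose, you need
\[
  \tau_B(R,\Delta_1+\Delta_2) \subseteq \mu\bigl(\tau_C(S,p_1^*\Delta_1+p_2^*\Delta_2)\bigr)
  \quad\text{and}\quad
  \tau_C(S,p_1^*\Delta_1+p_2^*\Delta_2)\subseteq \bigl(\iota_1(\tau_B(R,\Delta_1))S\bigr)\cdot\bigl(\iota_2(\tau_B(R,\Delta_2))S\bigr),
\]
so that applying $\mu$ to the second and composing gives the claim. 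You have written the Künneth-type inclusion with $\supseteq$, which is the easy direction (it follows from $\Delta$-compatibility of the annihilator conditions under each factor map) and does not help: together with $\mu(\tau_C)\supseteq\tau_B(R,\Delta_1+\Delta_2)$, you would only learn that two ideals both sit inside $\mu(\tau_C)$, not that one contains the other. This is precisely the reason the multiplier-ideal proof needs the nontrivial \emph{equality} $\cJ(X\times X,p_1^*D_1+p_2^*D_2)=p_1^{-1}\cJ(X,D_1)\cdot p_2^{-1}\cJ(X,D_2)$ and not merely one inclusion.

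Second, even once the direction is corrected, the Künneth inclusion $\subseteq$ is the genuinely hard content of this route and is not something you can wave at. For multiplier ideals it is proved by an explicit computation on a product of log resolutions combined with Künneth for coherent cohomology; for classical test ideals in characteristic $p>0$ it relies on $F$-finiteness, a Künneth for Frobenius pushforwards over a perfect field, and a base-change argument for completions. For Robinson's $\tau_B$ in mixed or equal characteristic zero none of these tools is available, and the relationship between $H^{\dim S}_{\fm_S}(C)$ and $H^d_\fm(B)\otimes H^d_\fm(B)$ (or even whether $C$ can be chosen to decompose along the two factors of $S=R\mathbin{\widehat\otimes}_A R$) is not at all clear. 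Your own description flags this as ``the step I expect to be the main obstacle,'' which is accurate: with the current literature it is a substantial open problem in this generality, not a step you can outsource to ``a careful Matlis-duality computation.'' The restriction-to-the-diagonal step is also not free for $\tau_B$: colon-capturing for $C/\ker(\mu)C$ does not automatically produce the $R^+$-algebra structure and compatibility with $B$ that you use.

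By contrast, the paper proves \eqref{eq:mainsubadditivity} directly, without any product ring, by Matlis duality in $H^d_\fm(R)$: it reduces to showing that $\eta$ with $\tau_B(R,[\underline{f_1}]^{t_1}\cdots[\underline{f_{i_0}}]^{t_{i_0}})\cdot\eta\subseteq 0^{B,K_R,[\underline{f_{i_0+1}}]^{t_{i_0+1}}\cdots[\underline{f_n}]^{t_n}}_{H^d_\fm(R)}$ lies in $0^{B,K_R,[\underline{f_1}]^{t_1}\cdots[\underline{f_n}]^{t_n}}_{H^d_\fm(R)}$, and the key trick is that $R\to B$ is faithfully flat because $R$ is regular, so $\Ann_{H^d_\fm(B)}(-)$ commutes with the base change $R\to B$. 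This lets one write $g_{i_0+1}\cdots g_n\eta$ as a finite $B$-linear combination of elements of $0^{B,K_R,[\underline{f_1}]^{t_1}\cdots[\underline{f_{i_0}}]^{t_{i_0}}}_{H^d_\fm(R)}$ and then kill the whole combination by multiplying by $g_1\cdots g_{i_0}$. Notice that this argument, like the statement, is intrinsically about the fixed-generator ideals $\tau_B(R,[\underline{f_i}]^{t_i})$: the paper does not claim, and does not need, subadditivity for the generator-free ideals $\tau_B(R,\fa_i^{t_i})$. Your first reduction (from the fixed-generator statement to the divisor-pair statement via Lemma \ref{lem:testidealasasum}$(\ref{lem:taubasasum})$ and distributivity of products over sums) is valid; but the reason the paper works at the level of $[\underline{f_i}]^{t_i}$ is precisely because the direct flatness argument is cleanest there, and it sidesteps every difficulty in your route.
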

We note that the proof of the special case $\tau_+(\cO_X,\Delta_1+\Delta_2)
\subseteq
\tau_+(\cO_X,\Delta_1) \cdot \tau_+(\cO_X,\Delta_2)$ (still assuming $X =
\Spec(R)$) was suggested
in \cite[Bottom of p.\ 32]{HLS}.
\begin{proof}
  Let $d = \dim(R)$.
  The two ``in particular'' statements follow from \eqref{eq:mainsubadditivity}
  by Lemma \ref{lem:testidealasasum}$(\ref{lem:hlscomparison})$ and Remark
  \ref{rem:hls512}, respectively.
  It therefore suffices to show \eqref{eq:mainsubadditivity}.
  \par For \eqref{eq:mainsubadditivity}, we want to show the containment
  \begin{align}
    \begin{split}
      \MoveEqLeft[2]
      0^{B,K_R,[\underline{f_1}]^{t_1}[\underline{f_2}]^{t_2} \cdots
      [\underline{f_n}]^{t_n}}_{H^d_\fm(R)}\\
      &\supseteq \Ann_{H^d_\fm(R)}\Bigl(
        \tau_B\bigl(R,[\underline{f_1}]^{t_1}[\underline{f_2}]^{t_2}\cdots
        [\underline{f_{i_0}}]^{t_{i_0}}\bigr) \cdot
        \tau_B\bigl(R,[\underline{f_{i_0+1}}]^{t_{i_0+1}}
          [\underline{f_{i_0+2}}]^{t_{i_0+2}}\cdots
      [\underline{f_n}]^{t_n}\bigr)\Bigr)
    \end{split}\label{eq:dualofsubadd}
    \intertext{by \cite[Lemma 2.1$(iii)$]{Smi95}.
    We claim it suffices to show that}
    \begin{split}
      \MoveEqLeft[2]
      0^{B,K_R,[\underline{f_1}]^{t_1}[\underline{f_2}]^{t_2} \cdots
      [\underline{f_n}]^{t_n}}_{H^d_\fm(R)}\\
      &\supseteq
      \Set[\bigg]{\eta \in H^d_\fm(R)
        \given \tau_{B}\bigl(R,[\underline{f_1}]^{t_1}[\underline{f_2}]^{t_2}
        \cdots [\underline{f_{i_0}}]^{t_{i_0}}\bigr) \cdot \eta \subseteq
        0^{B,K_R,[\underline{f_{i_0+1}}]^{t_{i_0+1}}
          [\underline{f_{i_0+2}}]^{t_{i_0+2}}\cdots
      [\underline{f_n}]^{t_n}}_{H^d_\fm(R)}}.
    \end{split}\label{eq:ms183}
  \end{align}
  If $\eta$ is in the module on the right-hand side of
  \eqref{eq:dualofsubadd}, then
  \begin{align*}
      \MoveEqLeft[4]
      \tau_B\bigl(R,[\underline{f_1}]^{t_1}[\underline{f_2}]^{t_2}\cdots
      [\underline{f_{i_0}}]^{t_{i_0}}\bigr)\cdot \eta\\
      &\subseteq
      \Ann_{H^d_\fm(R)}\Bigl(\tau_B\bigl(R,[\underline{f_{i_0+1}}]^{t_{i_0+1}}
          [\underline{f_{i_0+2}}]^{t_{i_0+2}}\cdots
      [\underline{f_n}]^{t_n}\bigr)\Bigr)\\
      &= 0^{B,K_R,[\underline{f_{i_0+1}}]^{t_{i_0+1}}
        [\underline{f_{i_0+2}}]^{t_{i_0+2}}\cdots
      [\underline{f_n}]^{t_n}}_{H^d_\fm(R)}
  \end{align*}
  again by \cite[Lemma 2.1$(iii)$]{Smi95}, and hence
  $\eta$ lies in the module on the right-hand side of \eqref{eq:ms183}.
  By \eqref{eq:ms183}, we see that $\eta \in
  0^{B,K_R,[\underline{f_1}]^{t_1}[\underline{f_2}]^{t_2} \cdots
  [\underline{f_n}]^{t_n}}_{H^d_\fm(R)}$, showing \eqref{eq:dualofsubadd} as
  claimed.
  \par It remains to show \eqref{eq:ms183}.
  Suppose $\eta$ is in the module on the right-hand side of \eqref{eq:ms183}.
  By definition, we know that
  \[
    g_{i_0+1}g_{i_0+2}\cdots g_n \eta \cdot
    \tau_{B}\bigl(R,[\underline{f_1}]^{t_1}[\underline{f_2}]^{t_2}
    \cdots [\underline{f_{i_0}}]^{t_{i_0}}\bigr) = 0 \subseteq H^d_\fm(B)
  \]
  for all $g_i = \prod_{k=1}^{a_i} f_{i,j_k}^{1/m_i}$ where $a_i \ge m_it_i$ and
  $i > i_0$.
  We therefore have
  \begin{align*}
    g_{i_0+1}g_{i_0+2}\cdots g_n \eta &\in \Ann_{H^d_\fm(B)}\Bigl(
    \tau_{B}\bigl(R,[\underline{f_1}]^{t_1}[\underline{f_2}]^{t_2}
    \cdots [\underline{f_{i_0}}]^{t_{i_0}}\bigr) \cdot B \Bigr).
  \intertext{Since $R$ is regular and $B$ is a big Cohen--Macaulay algebra over
  $R$, we
  know that $R \to B$ is faithfully flat
  \cite[Lemma 5.5]{Hoc75queens} (see also \citeleft\citen{HH92}\citemid
  (6.7)\citepunct \citen{HH95}\citemid Lemma 2.1$(d)$\citeright), and hence
  taking annihilators commutes with extending scalars to $B$
  \cite[Chapter I, \S2, no.\ 10, Proposition 12 and Remark on
  p.\ 24]{Bou72}.
  Thus, we have}
    g_{i_0+1}g_{i_0+2}\cdots g_n \eta &\in
    B \otimes_R \Ann_{H^d_\fm(R)}\Bigl(
      \tau_{B}\bigl(R,[\underline{f_1}]^{t_1}[\underline{f_2}]^{t_2}
    \cdots [\underline{f_{i_0}}]^{t_{i_0}}\bigr) \Bigr)\\
    \MoveEqLeft[-4] =
    B \otimes_R 0_{H^d_\fm(R)}^{B,K_R,[\underline{f_1}]^{t_1}[\underline{f_2}]^{t_2}
    \cdots [\underline{f_{i_0}}]^{t_{i_0}}}
  \end{align*}
  again by \cite[Lemma 2.1$(iii)$]{Smi95}, and we can write
  \[
    g_{i_0+1}g_{i_0+2}\cdots g_n \eta = b_1\eta_1 + b_2\eta_2 + \cdots + b_\ell
    \eta_\ell \in H^d_\fm(B)
  \]
  where $b_1,b_2,\ldots,b_\ell \in B$ and $\eta_1,\eta_2,\ldots,\eta_\ell \in
  0_{H^d_\fm(R)}^{B,K_R,[\underline{f_1}]^{t_1}[\underline{f_2}]^{t_2}
  \cdots [\underline{f_{i_0}}]^{t_{i_0}}}$.
  Finally, multiplying this sum
  by any product of elements $g_1g_2\cdots g_{i_0}$ where
  $g_i = \prod_{k=1}^{a_i} f_{i,j_k}^{1/m_i}$ where $a_i \ge m_it_i$ and $i \le
  i_0$, we see that
  \begin{align*}
    \MoveEqLeft[3]
    g_1g_2\cdots g_{i_0}g_{i_0+1}g_{i_0+2}\cdots g_n \eta\\
    &=
    b_1(g_1g_2\cdots g_{i_0}\eta_1) + b_2(g_1g_2\cdots g_{i_0}\eta_2)
    + \cdots + b_\ell(g_1g_2\cdots g_{i_0}\eta_\ell)\\
    &= 0
  \end{align*}
  in $H^d_\fm(B)$,
  and hence $\eta \in 0^{B,K_R,[\underline{f_1}]^{t_1}[\underline{f_2}]^{t_2} \cdots
  [\underline{f_n}]^{t_n}}_{H^d_\fm(R)}$ as claimed in \eqref{eq:ms183}.
\end{proof}
\subsection{Comparison with multiplier ideals}
Finally, we show that Robinson's big Cohen--Macaulay test ideals are contained in
multiplier ideals.
While we will only use this comparison in the proof of Theorem
\ref{thm:ty23like} in equal characteristic zero, the proof yields a statement
valid in arbitrary characteristic.
Ma and Schwede showed a version of this statement when $\Delta = 0$ for
perfectoid test ideals in mixed characteristic $p > 0$ in \cite[Theorem
6.3]{MS18}.
The statement for big Cohen--Macaulay test ideals of pairs $(R,\Delta)$
in residue characteristic $p > 0$ is also due to Ma and Schwede
\cite[Proposition 3.7 and Theorem 6.21]{MS} (see also \cite[Theorem
5.1]{MSTWW}).
The statement below is due to Robinson in residue characteristic $p > 0$ when $n
= 1$ \cite[Theorem 3.9]{Rob}.
A version of this statement for pairs $(R,\Delta)$ appears in the arXiv version
of \cite{MSTWW} (see footnote \ref{footnote:mstww} on page
\pageref{footnote:mstww} of this paper), although in equal characteristic zero
the big Cohen--Macaulay
algebra constructed therein is not necessarily an $R^+$-algebra.
\par We note that recently, Yamaguchi showed that various versions of big
Cohen--Macaulay test ideals for normal local rings $(R,\fm)$ essentially of
finite type over $\CC$ coincide with the multiplier ideal
\cite[Theorem 6.4 and Proposition 7.7]{Yam} when $B$ is chosen to be the
$\fm$-adic completion of the big Cohen--Macaulay algebra constructed by
Schoutens for this class of rings in \cite[Theorem A]{Sch04}.
\begin{theorem}[cf.\ {\citeleft\citen{MS18}\citemid Theorem 6.3\citepunct
  \citen{MS}\citemid Proposition 3.7 and Theorem 6.21\citepunct
  \citen{MSTWW}\citemid Theorem 5.1\citepunct
  \citen{Rob}\citemid Theorem 3.9\citeright}]\label{thm:compmultideals}
  Fix notation as in Notation \ref{notation:ms21like} and Definition
  \ref{def:ms18like}.
  Let $\mu\colon Y \to \Spec(R)$ be a proper birational morphism with $Y$
  normal such that for every $i$, there exists a Weil divisor $F_i$ on $Y$ for
  which
  \[
    \frac{1}{m_i} \mu^*\prdiv_R(g_i) \ge t_iF_i
  \]
  for every $g_i \in \fa_i^{\lceil m_it_i \rceil}$ as $m_i$ ranges over all
  positive integers.
  Then, there exists an $R^+$-algebra $B$ that is big Cohen--Macaulay over $R$
  such that
  \[
    \tau_B\bigl(R,\Delta,\fa_1^{t_1}\fa_2^{t_2}\cdots\fa_n^{t_n}\bigr) \subseteq
    \mu_*\cO_Y\biggl(K_Y - \biggl\lfloor \mu^*(K_R+\Delta) + \sum_{i=1}^n t_iF_i
    \biggr\rfloor \biggr).
  \]
  In particular, if $R$ is of equal characteristic zero, there exists
  an $R^+$-algebra $B$ that is big Cohen--Macaulay over $R$
  such that for all choices of $t_1,t_2,\ldots,t_n \ge 0$, we have
  \[
    \tau_B\bigl(R,\Delta,\fa_1^{t_1}\fa_2^{t_2}\cdots\fa_n^{t_n}\bigr) \subseteq
    \cJ\bigl(R,\Delta,\fa_1^{t_1}\fa_2^{t_2}\cdots\fa_n^{t_n}\bigr)
  \]
\end{theorem}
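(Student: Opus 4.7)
The plan is to reduce the statement for the triple $(R,\Delta,\fa_1^{t_1}\cdots\fa_n^{t_n})$ to the already-established comparison between big Cohen--Macaulay test ideals of divisor pairs and pushforwards of dualizing sheaves, which was worked out by Ma--Schwede for pairs $(R,\Delta)$ in \cite{MS} and by Robinson for triples $(R,\Delta,\fa^t)$ with a single ideal in \cite{Rob}. The bridge between triples and divisor pairs is provided by Lemma \ref{lem:testidealasasum}$(\ref{lem:taubasasum})$: the sum expression
\[
  \tau_B\bigl(R,\Delta,\fa_1^{t_1}\fa_2^{t_2}\cdots\fa_n^{t_n}\bigr)
  = \sum_{m_1,\ldots,m_n} \sum_{g_i \in \fa_i^{\lceil m_it_i\rceil}}
  \tau_B\biggl(R,\Delta + \sum_{i=1}^n \tfrac{1}{m_i}\prdiv_R(g_i)\biggr)
\]
is actually a \emph{finite} sum since $R$ is Noetherian and $H^d_\fm(R)$ is Artinian, so only finitely many summands are needed.

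With this reduction in hand, the first main step is to apply the divisor-pair comparison to each summand: for each of the finitely many choices of $(m_i,g_i)$, there exists an $R^+$-algebra $B_{(m_i,g_i)}$ that is big Cohen--Macaulay over $R$ such that
\[
  \tau_{B_{(m_i,g_i)}}\biggl(R,\Delta + \sum_{i=1}^n \tfrac{1}{m_i}\prdiv_R(g_i)\biggr)
  \subseteq \mu_*\cO_Y\biggl(K_Y - \biggl\lfloor \mu^*(K_R+\Delta) + \sum_{i=1}^n \tfrac{1}{m_i}\mu^*\prdiv_R(g_i)\biggr\rfloor\biggr).
\]
The hypothesis $\frac{1}{m_i}\mu^*\prdiv_R(g_i) \ge t_iF_i$ for all admissible $g_i$ then immediately forces each right-hand side into $\mu_*\cO_Y(K_Y - \lfloor \mu^*(K_R+\Delta) + \sum t_iF_i\rfloor)$ by monotonicity of the rounded floor.

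The main obstacle is then the one that already appears in \cite{MS,MSTWW,Rob}: assembling the finitely many BCM algebras $B_{(m_i,g_i)}$ into a \emph{single} BCM $R^+$-algebra $B$ such that all the test-ideal containments above hold simultaneously for $B$. The point is that $\tau_B$ is (weakly) order-reversing in $B$ under $R^+$-algebra maps, so it suffices to produce a BCM $R^+$-algebra $B$ receiving compatible maps from each $B_{(m_i,g_i)}$. In residue characteristic $p>0$, we may simply take $B = \widehat{R^+}$, which is a BCM $R^+$-algebra by Remark \ref{rem:bigcmalgexist}$(\ref{rem:bigcmalgexistrplusmixedchar})$ and is canonical, so no combining is required. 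In equal characteristic zero, this is exactly what Theorem \ref{thm:weaklyfunctorialequalchar0} was designed for: iteratively apply the weak functoriality in diagram \eqref{eq:weakfunchar0} to produce, out of the finitely many $B_{(m_i,g_i)}$, a common BCM $R^+$-algebra $B$ sitting above all of them.

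For the ``in particular'' statement in equal characteristic zero, fix a log resolution $\mu\colon Y \to \Spec(R)$ of $(R,\Delta,\fa_1\fa_2\cdots\fa_n)$ (which exists by \cite[Theorem 1.1]{Tem08}), and write $\mu^{-1}\fa_i\cdot\cO_Y = \cO_Y(-F_i)$. The hypothesis $\frac{1}{m_i}\mu^*\prdiv_R(g_i)\ge t_iF_i$ for every $g_i \in \fa_i^{\lceil m_it_i\rceil}$ and every $m_i>0$ holds automatically on a log resolution for every $t_i\ge 0$. The subtlety is that, a priori, the BCM algebra $B$ produced depends on the tuple $(t_1,\ldots,t_n)$. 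To obtain a single $B$ working for \emph{all} $(t_1,\ldots,t_n)\in \RR_{\ge 0}^n$, use the fact that on the fixed log resolution $\mu$, the rounded divisor $\lfloor \mu^*(K_R+\Delta)+\sum t_iF_i\rfloor$ takes only countably many values as $(t_1,\ldots,t_n)$ varies, and moreover each such value is already achieved at some tuple with rational entries. Hence only countably many BCM algebras are needed, and these can be assembled into a single $B$ by iterating the weakly functorial construction of Theorem \ref{thm:weaklyfunctorialequalchar0} along a countable directed system and passing to a suitable colimit (checking along the way that the colimit remains balanced big Cohen--Macaulay, using that the defining regular-sequence condition passes through filtered colimits). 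The resulting $B$ satisfies the required containment with $\cJ(R,\Delta,\fa_1^{t_1}\cdots\fa_n^{t_n})$ on the right-hand side by the very definition of the multiplier ideal (Definition \ref{def:multiplierideal}).
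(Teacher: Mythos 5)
Your high-level approach—using the finite sum in Lemma \ref{lem:testidealasasum}$(\ref{lem:taubasasum})$ to reduce to divisor pairs—matches the paper's, but you have missed the key observation that eliminates the entire ``combining BCM algebras'' machinery you set up. The big Cohen--Macaulay $R^+$-algebra $B$ that witnesses the divisor-pair containment $\tau_B(R,\Delta') \subseteq \mu_*\cO_Y(K_Y - \lfloor\mu^*(K_R+\Delta')\rfloor)$ in \cite[Proposition 3.7]{MS} and \cite[Theorem 5.1]{MSTWW} is constructed purely from the morphism $\mu$: after a Chow's-lemma reduction to $\mu$ projective with $Y = \Proj_R(R[Jt])$ and $R[Jt]$ normal (a reduction your proof omits entirely), one completes $R[Jt]$ at $\fm+Jt$, quotients by a minimal prime, and applies weak functoriality of BCM $R^+$-algebras to the resulting surjection onto $R$. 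The $B$ so constructed does not depend on the divisor $\Delta'$. Since the finitely many summands in your decomposition differ only in the divisor $\Delta + \sum_i \tfrac{1}{m_i}\prdiv_R(g_i)$ but all use the same $\mu$, one $B$ serves them all simultaneously. For the same reason, once $\mu$ is chosen as a log resolution of $(R,\Delta,\fa_1\cdots\fa_n)$, the same $B$ works for all $(t_1,\ldots,t_n)\ge 0$ at once—your countable-colimit detour (and the attendant worry about BCM passing to colimits) is unnecessary.

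Your combining steps, as actually written, also contain gaps. In residue characteristic $p>0$, ``$\widehat{R^+}$ is canonical, so no combining is required'' does not establish that the algebras $B_{(m_i,g_i)}$ produced by \cite[Theorem 5.1]{MSTWW} admit $R^+$-algebra maps to $\widehat{R^+}$; that comparison is an existence statement for a Rees-algebra-constructed $B$, and one must actually trace through its proof to see that $\widehat{R^+}$ can be substituted. In equal characteristic zero, Theorem \ref{thm:weaklyfunctorialequalchar0} provides functoriality along a local ring map $R \to S$, not a procedure for amalgamating several BCM $R^+$-algebras over a single $R$; for that you would need to invoke the seed-algebra results of \cite{DRG19} directly (the tensor product of seeds over $R^+$ is again a seed, hence maps to a BCM algebra). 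Neither gap is fatal, but both become moot once you construct $B$ from $\mu$ alone as the paper does.
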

We note that the hypothesis that the $F_i$ exist holds in particular when
$\mu^{-1}\fa_i \cdot \cO_Y = \cO_Y(-F_i)$ for some Cartier divisors $F_i$ on
$Y$.
\begin{proof}
  First, we claim that we may assume that $\mu$ is projective, and that if we
  write $Y = \Proj_R(R[Jt])$ for an ideal $J \subseteq R$, we may assume that
  $R[Jt]$ is normal.
  We adapt the proof of \cite[Theorem 5.1]{MSTWW}.
  By Chow's lemma, there exists a projective birational morphism $\mu'\colon
  Y' \to Y$ such that $\mu\circ\mu'$ is projective.
  Now writing $Y' = \Proj_R(R[J't])$, we may replace $R[J't]$ by its
  normalization to assume that $R[J't]$ is normal, in which case
  $\Proj_R(R[J't])$ is also normal.
  Finally, we note that
  \begin{align*}
    \MoveEqLeft[5]
    (\mu\circ\mu')_*\cO_{Y'}\biggl(K_{Y'} - \biggl\lfloor (\mu \circ \mu')^*
    (K_R+\Delta) + \sum_{i=1}^n t_i\mu^{\prime*}F_i \biggr\rfloor \biggr)\\
    &\subseteq
    \mu_*\cO_Y\biggl(K_Y - \biggl\lfloor \mu^*(K_R+\Delta) + \sum_{i=1}^n t_iF_i
    \biggr\rfloor \biggr)
  \end{align*}
  by the projection formula since $\mu'_*\cO_{Y'}(K_{Y'}) \subseteq \cO_Y(K_Y)$.
  We may therefore replace $Y$ by $Y'$ to assume that $\mu$ is projective, and
  that writing $Y = \Proj_R(R[Jt])$, the Rees algebra $R[Jt]$ is normal.
  \par We now construct the big Cohen--Macaulay algebra $B$, following the
  proof of \cite[Proposition 3.7]{MS}.
  Let $S = R[Jt]$ and let $\fn$ denote the maximal ideal $\fm + Jt$, and
  consider the surjective ring map $\widehat{S_\fn} \to R$.
  Since $R$ is a domain, this map factors through $\widehat{S_\fn}/\fp$, where
  $\fp$ is a minimal prime of $\widehat{S_\fn}$, and $\dim(\widehat{S_\fn}/\fp)
  = \dim(R) + 1$.
  We now note that $\widehat{S_\fn}/\fp$ and $R$ have the same characteristic.
  Thus, by \citeleft\citen{HH92}\citemid Theorem 1.1\citepunct
  \citen{HH95}\citemid Proposition 1.2\citeright\ in equal characteristic $p >
  0$,
  \cite[Theorem 1.2.1]{And20} in mixed characteristic, and Theorem
  \ref{thm:weaklyfunctorialequalchar0} in equal characteristic zero, there
  exists a commutative diagram
  \[
    \begin{tikzcd}
      B' \rar & B\\
      \bigl(\widehat{S_\fn}/\fp\bigr)^+ \rar\uar & R^+\uar\\
      \widehat{S_\fn}/\fp \rar\uar & R\uar
    \end{tikzcd}
  \]
  where $B'$ is big Cohen--Macaulay over $\widehat{S_\fn}/\fp$ and $B$ is big
  Cohen--Macaulay over $R$.
  \par We now prove the theorem for the choice of $B$ constructed in the
  previous paragraph.
  By Lemma \ref{lem:testidealasasum}$(\ref{lem:taubasasum})$, we have
  \[
    \tau_B\bigl(R,\Delta,\fa_1^{t_1}\fa_2^{t_2}\cdots\fa_n^{t_n}\bigr) =
    \sum_{m_1,m_2,\ldots,m_n \in
    \ZZ_{>0}} \sum_{g_i \in \fa_i^{\lceil m_it_i \rceil}}
    \tau_B\biggl(R,\Delta+\sum_{i=1}^n \frac{1}{m_i}\prdiv_R(g_i) \biggr).
  \]
  By the proof of \cite[Theorem 5.1]{MSTWW}\footnote{See also Remark 5.1.1 in
  the arXiv version of \cite{MSTWW} available at
  \url{https://arxiv.org/abs/1910.14665v5} in the equal characteristic zero
  case.\label{footnote:mstww}}, we see that
  \[
    \tau_B\biggl(R,\Delta+\sum_{i=1}^n \frac{1}{m_i}\prdiv_R(g_i) \biggr)
    \subseteq
    \mu_*\cO_Y\biggl(K_Y - \biggl\lfloor \mu^*(K_R+\Delta) + \sum_{i=1}^n
    \frac{1}{m_i} \mu^*\prdiv_R(g_i) \biggr\rfloor \biggr)
  \]
  for every $m_i$ and $g_i$.
  Since
  $\frac{1}{m_i} \mu^*\prdiv_R(g_i)
  \ge t_iF_i$,
  we see that
  \[
    \tau_B\biggl(R,\Delta+\sum_{i=1}^n \frac{1}{m_i}\prdiv_R(g_i) \biggr)
    \subseteq 
    \mu_*\cO_Y\biggl(K_Y - \biggl\lfloor \mu^*(K_R+\Delta) + \sum_{i=1}^n t_iF_i
    \biggr\rfloor \biggr).
  \]
  \par Finally, the last statement when $R$ is of equal characteristic zero
  follows by choosing $\mu$ to be a log resolution for the triple
  $(R,\Delta,\fa_1\fa_2\cdots\fa_n)$, since in this case
  \[
    \frac{1}{m_i} \mu^*\prdiv_R(g_i)
    \ge \frac{\lceil m_it_i \rceil}{m_i} F_i
    \ge t_iF_i
  \]
  for all choices of $t_1,t_2,\ldots,t_n \ge 0$.
\end{proof}

\section{Proof of main theorems via multiplier/test ideals}\label{sect:mainproofs}
Our goal in this section is to prove Theorems \ref{thm:mainelshhms},
\ref{thm:mainjohnson}, and \ref{thm:mainjohnsonhhty}
using Robinson's version of big Cohen--Macaulay
test ideals and our results for these test ideals
from \S\ref{sect:bcmtestfixedgens}.
In equal characteristic zero, we also use multiplier ideals.
The idea is to use our results for the test ideals
$\tau_B(R,\Delta,[\underline{f}]^t)$ and
$\tau_B(R,\Delta,\fa^t)$
proved above, instead of those for
multiplier ideals or for existing versions of test ideals used in
\cite[Theorem 2.2 and Variant
on p.\ 251]{ELS01}, \cite[Theorem 2.12]{Har05}, \cite[Theorem 3.1]{TY08}, and
\cite[Theorem 7.4]{MS18} (see also \cite[Theorem 6.23]{ST12})
to prove previously known cases of Theorem \ref{thm:mainelshhms}.
\par To prove Theorems \ref{thm:mainelshhms}, \ref{thm:mainjohnson}, and
\ref{thm:mainjohnsonhhty},
we prove Theorem \ref{thm:ty23like} using
the test ideals $\tau_B(R,\Delta,[\underline{f}]^t)$ and
$\tau_B(R,\Delta,\fa^t)$
and the big Cohen--Macaulay test ideals from
\cite{MS,PRG,ST}, together with
the $+$-test
ideals from \cite{HLS} in residue characteristic $p > 0$ and with
multiplier ideals in equal characteristic zero as developed in
\cite{dFM09,JM12,ST}.
As mentioned in \S\ref{sect:intropartii}, we also give a proof of Theorems
\ref{thm:mainjohnson} and \ref{thm:mainjohnsonhhty} in equal characteristic zero
using only multiplier ideals.
We present these proofs in equal characteristic zero using multiplier ideals in
\S\ref{subsection:proofsusingmult}, followed by the proofs in arbitrary
characteristic in \S\ref{subsection:generalproofs}.
\par Since Theorem \ref{thm:mainelshhms} follows
from Theorem \ref{thm:mainjohnson} by setting
$s_i = 0$ for all $i$, it suffices to show Theorems \ref{thm:mainjohnson},
\ref{thm:mainjohnsonhhty}, and \ref{thm:ty23like}.
\subsection{Proof via multiplier ideals in equal characteristic zero}
\label{subsection:proofsusingmult}
We prove Theorems \ref{thm:mainjohnson} and \ref{thm:mainjohnsonhhty} in
equal characteristic zero using multiplier ideals.
The proof below uses
multiplier ideals (see Definition \ref{def:multiplierideal}) and
illustrates the strategy we will want to use for Theorems \ref{thm:mainjohnson},
\ref{thm:mainjohnsonhhty}, and \ref{thm:ty23like} in arbitrary characteristic.
Our proof also yields the first proof of Theorem \ref{thm:mainelshhms} for
all regular rings of equal characteristic zero that does not rely on the
N\'eron-type desingularization theorem due to Artin and Rotthaus \cite[Theorem
1]{AR88} or stronger results.
This proof therefore answers a question of Schoutens \cite[p.\ 179 and p.\
187]{Sch03}, who asked whether one could show Theorem \ref{thm:mainelshhms} in
equal characteristic zero without using \cite[Theorem 1]{AR88}.\medskip
\par We first prove the analogue of Theorem \ref{thm:ty23like} for multiplier
ideals.
\begin{proposition}[cf.\ {\citeleft\citen{ELS01}\citemid Proof of Variant on p.\
  251\citepunct \citen{TY08}\citemid Proof of Theorem 4.1\citeright}]
  \label{prop:ty23likechar0}
  Let $R$ be an excellent normal domain of equal characteristic zero
  with a dualizing complex $\omega_R^\bullet$ and associated choice of canonical
  divisor $K_R$.
  Suppose that $K_R$ is $\QQ$-Cartier, and set $X = \Spec(R)$.
  \par Consider an ideal $I\subseteq R$, and let $h$ be the largest analytic
  spread of $IR_\fp$, where $\fp$ ranges over all associated primes of
  $R/I$.
  Then, for every integer $M > 0$, we have
  \begin{equation}\label{eq:lastinclusionwithmultiplier}
    \cJ\Bigl(X,\bigl(I^{(M)}\bigr)^{\frac{s_i+h}{M}}\Bigr) \subseteq
    I^{(s_i+1)}
  \end{equation}
  for every integer $s_i \ge 0$.
\end{proposition}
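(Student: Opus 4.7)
The plan is to verify the inclusion \eqref{eq:lastinclusionwithmultiplier} locally at each associated prime of $R/I$, reproducing the sequence of equalities and inclusions in \eqref{eq:tysketch}. Specifically, fix $\fp \in \Ass_R(R/I)$. By the definition of symbolic powers and Lemma \ref{lem:symbolicdefs}, it suffices to show that
\[
\cJ\Bigl(X,\bigl(I^{(M)}\bigr)^{\frac{s_i+h}{M}}\Bigr) R_\fp \subseteq I^{s_i+1} R_\fp
\]
for every such $\fp$, since intersecting these inclusions and meeting with $R$ produces the symbolic power $I^{(s_i+1)}$.

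First I would apply the localization property $(\ref{multiplierideal:localization})$ for multiplier ideals on excellent normal schemes of equal characteristic zero, as developed in \cite{dFM09,JM12,ST}, to rewrite the left-hand side as $\cJ(\Spec(R_\fp),(I^{(M)}R_\fp)^{(s_i+h)/M})$. Because $\fp$ is an associated prime of $R/I$, we have $I^{(M)}R_\fp = I^MR_\fp$, so this becomes $\cJ(\Spec(R_\fp),(I^MR_\fp)^{(s_i+h)/M})$. Next, the unambiguity-of-exponent statement $(\ref{multiplierideal:unambiguity})$ converts this to $\cJ(\Spec(R_\fp),(IR_\fp)^{s_i+h})$. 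Finally, since the analytic spread of $IR_\fp$ is at most $h$, I would apply the Skoda-type theorem $(\ref{multiplierideal:skoda})$ repeatedly $s_i+1$ times (permissible because after each step the exponent $s_i+h, s_i+h-1,\ldots,h$ remains $\ge h$) to obtain
\[
\cJ\bigl(\Spec(R_\fp),(IR_\fp)^{s_i+h}\bigr) = (IR_\fp)^{s_i+1}\cdot\cJ\bigl(\Spec(R_\fp),(IR_\fp)^{h-1}\bigr) \subseteq (IR_\fp)^{s_i+1},
\]
the last inclusion holding because multiplier ideals are contained in the structure sheaf.

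The main obstacle is not the formal manipulation above, which is essentially identical to the argument of Ein--Lazarsfeld--Smith and Takagi--Yoshida, but rather the verification that properties $(\ref{multiplierideal:localization})$, $(\ref{multiplierideal:unambiguity})$, and especially the Skoda-type theorem $(\ref{multiplierideal:skoda})$ all hold in the present generality, where $X$ is only an excellent normal scheme of equal characteristic zero rather than essentially of finite type over a field. Localization and unambiguity in this generality follow from the foundational work of \cite{dFM09,JM12,ST}, but the Skoda-type theorem requires a Kawamata--Viehweg-style vanishing theorem on proper birational models of excellent schemes of equal characteristic zero. Precisely for this reason one needs the recent generalization \cite[Theorem A]{Mur} of Kawamata--Viehweg vanishing to proper morphisms of excellent schemes of equal characteristic zero; once that vanishing result is in hand, the proof of Skoda carries through verbatim on a log resolution of the triple $(X,0,I)$, and the chain of inclusions above completes the argument.
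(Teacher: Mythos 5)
Your proof is correct and follows essentially the same approach as the paper: localize at the associated primes of $R/I$ (where symbolic and ordinary powers coincide), use compatibility of multiplier ideals with localization, then apply the Skoda-type theorem, with \cite[Theorem A]{Mur} supplying the needed vanishing in the excellent $\QQ$-scheme setting. The only cosmetic difference is that the paper explicitly passes to a reduction $J_\ell \subseteq I$ with at most $h$ generators (noting that residue fields are infinite in characteristic zero), whereas you invoke the Skoda-type theorem $(\ref{multiplierideal:skoda})$ directly as a black box in terms of analytic spread; both formulations yield the same containment.
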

\begin{proof}
  \par Let $\{\fp_\ell\}$ denote the associated primes of $R/I$.
  It suffices to show that \eqref{eq:lastinclusionwithmultiplier} holds after
  localizing at every $\fp_\ell$.
  Since the residue field of $R_{\fp_\ell}$ contains $\QQ$, it is infinite,
  and hence for every $\ell$
  there exists an ideal $J_\ell \subseteq I$ with at most $h$ generators such
  that $J_\ell R_{\fp_\ell}$ is a reduction for
  $IR_{\fp_\ell}$
  by
  \cite[Proposition 8.3.7 and Corollary 1.2.5]{SH06}.
  Note that after localizing at $\fp_\ell$, the ordinary and symbolic powers of
  $I$ coincide.
  Setting $X_{\fp_\ell} = \Spec(R_{\fp_\ell})$, which is excellent by
  \cite[Scholie 7.8.3$(ii)$]{EGAIV2}, we have
  \begin{align*}
    \cJ\Bigl(X,\bigl(I^{(M)}\bigr)^{\frac{s_i+h}{M}}\Bigr) \cdot R_{\fp_\ell}
    &= \cJ\Bigl(X_{\fp_\ell},\bigl(I^{(M)}R_{\fp_\ell}
    \bigr)^{\frac{s_i+h}{M}}\Bigr)\\
    &= \cJ\Bigl(X_{\fp_\ell},\bigl(I^{M}R_{\fp_\ell}
    \bigr)^{\frac{s_i+h}{M}}\Bigr)\\
    &= \cJ(X_{\fp_\ell},I^{s_i+h}R_{\fp_\ell})
  \end{align*}
  since the formation of multiplier ideals is compatible with localization by
  \citeleft\citen{dFM09}\citemid Proposition 2.2\citepunct
  \citen{ST}\citemid Remark 2.5$(ii)$\citeright.
  Finally, by the Skoda-type result in \cite[Theorem 9.6.36]{Laz04b} (whose
  proof holds in this generality using \cite[Theorem A]{Mur} instead of the
  local vanishing theorem \cite[Variant 9.4.4]{Laz04b}), we have
  \[
    \cJ(X_{\fp_\ell},I^{s_i+h}R_{\fp_\ell}) = J_\ell^{s_i+1} R_{\fp_\ell} \cdot
    \cJ(X_{\fp_\ell},I^{h-1}R_{\fp_\ell}) \subseteq I^{s_i+1}
    R_{\fp_\ell},
  \]
  where the last inclusion holds since $J_\ell \subseteq I$ and
  $\cJ(X_{\fp_\ell},I^{h-1}R_{\fp_\ell}) \subseteq R_{\fp_\ell}$.
\end{proof}
We now show Theorems \ref{thm:mainjohnson} and
\ref{thm:mainjohnsonhhty} in equal characteristic zero.
\begin{proof}[Proof of Theorems \ref{thm:mainjohnson} and
  \ref{thm:mainjohnsonhhty} in equal characteristic zero via multiplier ideals]
  By Lemma \ref{lem:hh02reductions}, we may assume that $R$ is a complete local
  ring, which is excellent by \cite[Scholie 7.8.3$(iii)$]{EGAIV2}.
  Note that Step \ref{step:infiniteresidues} of Lemma
  \ref{lem:hh02reductions} is unnecessary in this case, since the complete local
  ring produced in Step \ref{step:completelocal} contains $\QQ$.
  \par Set $X = \Spec(R)$.
  We start with Theorem \ref{thm:mainjohnson}.
  We have
  \begin{align*}
    I^{(s+nh)} &= I^{(s+nh)} \cdot \cJ(X,R^1)
    \subseteq \cJ\Bigl(X,\bigl(I^{(s+nh)}\bigr)^1\Bigr),
  \intertext{where the first equality holds by \cite[Proposition
  2.3(4)]{dFM09}, and the second
  inclusion holds by Definition \ref{def:multiplierideal}.
  Next, by Definition \ref{def:multiplierideal}, we have}
    \cJ\Bigl(X,\bigl(I^{(s+nh)}\bigr)^1\Bigr) &=
    \cJ\Bigl(X,\bigl(I^{(s+nh)}\bigr)^{\frac{s+nh}{s+nh}}\Bigr)\\
    &= \cJ\Bigl(X,\bigl(I^{(s+nh)}\bigr)^{\frac{s_1+h}{s+nh}}
    \bigl(I^{(s+nh)}\bigr)^{\frac{s_2+h}{s+nh}} \cdots
    \bigl(I^{(s+nh)}\bigr)^{\frac{s_n+h}{s+nh}}\Bigr)\\
    &\subseteq \prod_{i=1}^n
    \cJ\Bigl(X,\bigl(I^{(s+nh)}\bigr)^{\frac{s_i+h}{s+nh}}\Bigr),
  \end{align*}
  where the last inclusion holds by applying
  the subadditivity theorem \cite[Theorem A.2]{JM12} $(n-1)$ times.
  We can therefore apply Proposition \ref{prop:ty23likechar0} when $M = s+nh$
  and combine these inclusions to show \eqref{eq:mainjohnsonincl}.
  \par We now prove Theorem \ref{thm:mainjohnsonhhty}.
  Let $l$ be the largest integer
  such that
  \[
    \cJ\Bigl(X,\bigl(I^{(s+nh+1)}\bigr)^{\frac{l}{s+nh+1}}\Bigr) = R.
  \]
  Note that such an $l$ exists since $l = 0$ satisfies this equality by
  \cite[Proposition 2.3(4)]{dFM09}.
  We then have
  \begin{align*}
    I^{(s+nh+1)} &= I^{(s+nh+1)} \cdot
    \cJ\Bigl(X,\bigl(I^{(s+nh+1)}\bigr)^{\frac{l}{s+nh+1}}\Bigr)\\
    &\subseteq
    \cJ\Bigl(X,\bigl(I^{(s+nh+1)}\bigr)^{\frac{l}{s+nh+1}}
    \bigl(I^{(s+nh+1)}\bigr)^{\frac{s+nh+1}{s+nh+1}}\Bigr)\\
    &= \cJ\Bigl(X,\bigl(I^{(s+nh+1)}\bigr)^{\frac{s+nh+l+1}{s+nh+1}}\Bigr)\\
    &= \cJ\Bigl(X,\bigl(I^{(s+nh+1)}\bigr)^{\frac{l+1}{s+nh+1}}
    \bigl(I^{(s+nh+1)}\bigr)^{\frac{s_1+h}{s+nh+1}} \cdots
    \bigl(I^{(s+nh+1)}\bigr)^{\frac{s_n+h}{s+nh+1}}\Bigr)
  \intertext{by Definition \ref{def:multiplierideal}.
  Applying the subadditivity theorem \cite[Theorem
  A.2]{JM12} $n$ times, we have}
    \MoveEqLeft[5]
    \cJ\Bigl(X,\bigl(I^{(s+nh+1)}\bigr)^{\frac{l+1}{s+nh+1}}
    \bigl(I^{(s+nh+1)}\bigr)^{\frac{s_1+h}{s+nh+1}} \cdots
    \bigl(I^{(s+nh+1)}\bigr)^{\frac{s_n+h}{s+nh+1}}\Bigr)\\
    &\subseteq \cJ\Bigl(X,\bigl(I^{(s+nh+1)}\bigr)^{\frac{l+1}{s+nh+1}}\Bigr)
    \cdot \prod_{i=1}^n
    \cJ\Bigl(X,\bigl(I^{(s+nh+1)}\bigr)^{\frac{s_i+h}{s+nh+1}}\Bigr)\\
    &\subseteq \fm \cdot \prod_{i=1}^n I^{(s_i+1)},
  \end{align*}
  where
  the last inclusion holds since
  $\cJ(X,(I^{(s+nh+1)})^{\frac{l+1}{s+nh+1}}) \subseteq \fm$
  by our assumption on $l$ and by applying Proposition \ref{prop:ty23likechar0}
  when $M = s+nh+1$.
  We can therefore combine these inclusions to show
  \eqref{eq:mainjohnsonhhtyincl}.
\end{proof}
\subsection{Proof via big Cohen--Macaulay test ideals in all characteristics}
\label{subsection:generalproofs}
We now prove Theorems \ref{thm:mainjohnson}, \ref{thm:mainjohnsonhhty}, and
\ref{thm:ty23like}
in all characteristics using Robinson's version of big Cohen--Macaulay test
ideals from \cite{Rob} and the results we proved for them in
\S\ref{sect:bcmtestfixedgens}.\medskip
\par We start by proving Theorem \ref{thm:ty23like}.
To prove Theorem \ref{thm:ty23like}, we use Robinson's test ideals from
\cite{Rob} and the
big Cohen--Macaulay test ideals
from \cite{MS,PRG,ST}, together with the $+$-test ideals from \cite{HLS} in
residue characteristic $p > 0$ and multiplier ideals in equal
characteristic zero.
In the proof below, $\widehat{R^+}$ denotes
the $p$-adic completion of the absolute integral
closure $R^+$ of a complete local ring $R$ of residue characteristic $p > 0$.
As in Remark \ref{rem:bigcmalgexist}$(\ref{rem:bigcmalgexistrplusmixedchar})$,
$\widehat{R^+}$
is a big Cohen--Macaulay $R^+$-algebra by \citeleft\citen{HH92}\citemid Theorem
8.1\citepunct \citen{Bha}\citemid Corollary 5.17\citeright.
\begin{proof}[Proof of Theorem \ref{thm:ty23like}]
  We first consider the case when $R$ is of residue characteristic $p > 0$, in
  which case we will show that setting $B = \widehat{R^+}$ suffices.
  Let $\{\fp_\ell\}$ denote the associated primes of $R/I$.
  It suffices to show that \eqref{eq:lastinclusionwithtaub} holds after
  localizing at every $\fp_\ell$.
  Since the residue field of $R_{\fp_\ell}$ is infinite, for every $\ell$
  there exists an ideal $J_\ell \subseteq I$ with at most $h$ generators such
  that $J_\ell R_{\fp_\ell}$ is a reduction of
  $IR_{\fp_\ell}$
  by
  \cite[Proposition 8.3.7 and Corollary 1.2.5]{SH06}.
  Note that after localizing at $\fp_\ell$, the ordinary and symbolic powers of
  $I$ coincide, and the integral closures of $I$ and $J_\ell$ coincide.
  Since $R$ is Noetherian, for every $\ell$, we can then
  choose an element $x_\ell \notin \fp_\ell$ such that
  \begin{equation}\label{eq:compareideals}
    I^{(M)}R_{x_\ell} = I^{M}R_{x_\ell} \qquad \text{and} \qquad
    \overline{I}R_{x_\ell} = \overline{J_\ell}R_{x_\ell}.
  \end{equation}
  Since $x_\ell \notin \fp_\ell$, it suffices to show that
  \eqref{eq:lastinclusionwithtaub} holds after inverting each $x_\ell$.
  \par By Lemma \ref{lem:testidealasasum}$(\ref{lem:hlscomparison})$,
  we have
  \begin{align*}
    \tau_{\widehat{R^+}}\Bigl(R,\bigl(I^{(M)}\bigr)^{\frac{s_i+h}{M}}\Bigr)
    &\subseteq \sum_{m=1}^\infty
    \sum_{g \in (I^{(M)})^m}
    \tau_{\widehat{R^+}}\biggl(R,\frac{s_i+h}{mM}
    \prdiv_R(g) \biggr).
  \intertext{We now invert $x_\ell$ and use the test ideal theory from
  \cite{HLS} (see \S\ref{sect:hls}).
  Setting $X = \Spec(R)$ and $U = \Spec(R_{x_\ell})$, we have
  $\tau_{\widehat{R}}(R,-) = \tau_+(\cO_X,-)$ and $\tau_{\widehat{R}}(R,-) \cdot
  R_{x_\ell} = \tau_+(\cO_U,-\rvert_U)$ by Remark \ref{rem:hls512} and
  Definition \ref{def:hlsqproj}.
  For fixed $m$, we have}
    \MoveEqLeft[8]\sum_{g \in (I^{(M)})^m}
    \tau_{\widehat{R^+}}\biggl(R,\frac{s_i+h}{mM}
    \prdiv_R(g) \biggr) \cdot R_{x_\ell}\\
    &= \sum_{g \in (I^{(M)})^m}
    \tau_+\biggl(\cO_U,\frac{s_i+h}{mM}
    \prdiv_U(g) \biggr)\\
    &= \sum_{\makebox[\widthof{$\scriptstyle g \in
    (I^{(M)})^m$}][c]{$\scriptstyle g \in
    (I^{M})^m$}}
    \tau_+\biggl(\cO_U,\frac{s_i+h}{mM}
    \prdiv_U(g) \biggr)\\
    &\subseteq 
    \sum_{\makebox[\widthof{$\scriptstyle g \in
    (I^{(M)})^m$}][c]{$\scriptstyle g \in
    (\overline{J_\ell}^{M})^m$}}
    \tau_+\biggl(\cO_U,\frac{s_i+h}{mM}
    \prdiv_U(g) \biggr)
  \intertext{by \cite[Corollary 5.8]{HLS}, since the ideals
  $(I^{(M)})^m$ and $(I^{M})^m$ coincide
  after inverting $x_\ell$ by \eqref{eq:compareideals}, as is also the case for
  $(\overline{J_\ell}^{M})^m$ and $(\overline{I}^{M})^m$.
  We therefore have}
  \tau_{\widehat{R^+}}\Bigl(R,\bigl(I^{(M)}\bigr)^{\frac{s_i+h}{M}}\Bigr)
    \cdot R_{x_\ell}
    &\subseteq \sum_{m=1}^\infty
    \sum_{g \in (\overline{J_\ell}^{M})^m}
    \tau_+\biggl(\cO_X,\frac{s_i+h}{mM}
    \prdiv_R(g) \biggr) \cdot R_{x_\ell}\\
    &\subseteq \sum_{m=1}^\infty
    \sum_{g \in \overline{J_\ell}^m}
    \tau_+\biggl(\cO_X,\frac{s_i+h}{m}
    \prdiv_R(g) \biggr) \cdot R_{x_\ell}.
  \end{align*}
  Next, by Lemma \ref{lem:hls64}, we have
  \begin{align*}
    \sum_{m=1}^\infty
    \sum_{g \in \overline{J_\ell}^m}
    \tau_+\biggl(\cO_X,\frac{s_i+h}{m}
    \prdiv_R(g) \biggr)
    &\subseteq \tau_+\bigl(\cO_X,\overline{J_\ell}^{s_i+h}\bigr)\\
    &= \tau_+(\cO_X,J_\ell^{s_i+h})\\
    &= J_\ell^{s_i+1} \cdot \tau_+(\cO_X,J_\ell^{h-1}),
  \end{align*}
  where the middle equality is by definition of $\tau_+$ (see the last
  paragraph in \cite[Remark 6.4]{HLS}), and the last equality is by
  the Skoda-type result in \cite[Theorem 6.6]{HLS} (see also \cite[Footnote on
  p.\ 27]{HLS}).
  Here, we use the fact that the analytic spread of $J_\ell$ is bounded above by
  the number of generators of $J_\ell$ \cite[Corollary 8.2.5]{SH06}, which is at
  most $h$.
  Inverting $x_\ell$ in this last inclusion and combining the containments so
  far, we obtain
  \[
    \tau_{\widehat{R^+}}\Bigl(R,\bigl(I^{(M)}\bigr)^{\frac{s_i+h}{M}}\Bigr)
    \cdot R_{x_\ell}
    \subseteq
    J_\ell^{s_i+1} \cdot \tau_+(X,J_\ell^{h-1}) \cdot R_{x_\ell}
    \subseteq I^{s_i+1}R_{x_\ell}
  \]
  where the last inclusion holds since $J_\ell \subseteq I$ and
  $\tau_+(X,J_\ell^{h-1}) \subseteq R$.\smallskip
  \par We now consider the case when $R$ is of equal characteristic zero.
  Set $X = \Spec(R)$, which is excellent by \cite[Scholie
  7.8.3$(iii)$]{EGAIV2}.
  Let $B_\natural$ be the $R^+$-algebra that is big Cohen--Macaulay over $R$ as
  constructed in Theorem \ref{thm:compmultideals}.
  By Lemma \ref{lem:testidealasasum}$(\ref{lem:hlscomparison})$ and
  Theorem \ref{thm:compmultideals}, we then have
  \[
    \tau_{B_\natural}\Bigl(R,\bigl(I^{(M)}\bigr)^{\frac{s_i+h}{M}}\Bigr)
    \subseteq \cJ\Bigl(X,\bigl(I^{(M)}
    \bigr)^{\frac{s_i+h}{M}}\Bigr)
    \subseteq I^{(s_i+1)},
  \]
  where the last inclusion holds by Proposition \ref{prop:ty23likechar0}.
\end{proof}
Finally, we prove Theorems \ref{thm:mainjohnson} and
\ref{thm:mainjohnsonhhty}.
\begin{proof}[Proof of Theorems \ref{thm:mainjohnson} and
  \ref{thm:mainjohnsonhhty} via big Cohen--Macaulay test ideals]
  By Lemma \ref{lem:hh02reductions}, we may assume that $R$ is a complete local
  ring and that the localizations of $R$ at the associated primes of $R/I$ have
  infinite residue fields.
  Throughout the proof, we define $\tau_B$ using the choice of canonical divisor
  $K_R = 0$ and the big Cohen--Macaulay algebra $B$ that satisfies the
  conclusion of Theorem \ref{thm:ty23like}.
  \par We start with Theorem \ref{thm:mainjohnson}.
  Set $M = s+nh$, and choose generators $f_1,f_2,\ldots,f_r$ for $I^{(s+nh)}$.
  We have
  \begin{align*}
    I^{(s+nh)}
    &= (f_1,f_2,\ldots,f_r)
    \subseteq \tau_{B}\bigl(R,[\underline{f}]^1\bigr)
  \intertext{by Proposition \ref{prop:taubmultiplied}
  (with notation as in Definition \ref{def:ms18like}).
  We then have}
    \tau_{B}\bigl(R,[\underline{f}]^1\bigr) &=
    \tau_{B}\Bigl(R,[\underline{f}]^{\frac{s+nh}{s+nh}}\Bigr)\\
    &= \tau_{B}\Bigl(R,[\underline{f}]^{\frac{s_1+h}{s+nh}}
    [\underline{f}]^{\frac{s_2+h}{s+nh}}
    \cdots
    [\underline{f}]^{\frac{s_n+h}{s+nh}}\Bigr)\\
    &\subseteq \prod_{i=1}^n
    \tau_{B}\Bigl(R,[\underline{f}]^{\frac{s_i+h}{s+nh}}\Bigr),
  \end{align*}
  where the second equality holds by the unambiguity-type statement in
  Proposition \ref{prop:notambig}, and the last inclusion holds by applying
  our version
  of the subadditivity theorem (Theorem \ref{thm:subadditivity}) $(n-1)$ times.
  We can therefore apply Proposition \ref{prop:ms18comparison}
  and Theorem \ref{thm:ty23like} when $M = s+nh$ and
  combine these inclusions to show \eqref{eq:mainjohnsonincl}.
  \par We now prove Theorem \ref{thm:mainjohnsonhhty}.
  Set $M = s+nh+1$, and choose generators $f_1,f_2,\ldots,f_r$ for
  $I^{(s+nh+1)}$.
  Let $l$ be the largest integer such that
  \[
    \tau_{B}\Bigl(R,[\underline{f}]^{\frac{l}{s+nh+1}}\Bigr) = R.
  \]
  Note that such an $l$ exists since $l = 0$ satisfies this equality by the
  faithful flatness of $R \to B$ \cite[Lemma 5.5]{Hoc75queens}
  (see also \citeleft\citen{HH92}\citemid
  (6.7)\citepunct \citen{HH95}\citemid Lemma 2.1$(d)$\citeright), and hence
  $H^d_\fm(R) \to H^d_\fm(B)$ is injective.
  We then have
  \begin{align*}
    I^{(s+nh+1)}
    &= (f_1,f_2,\ldots,f_r) \cdot
    \tau_{B}\Bigl(R,[\underline{f}]^{\frac{l}{s+nh+1}}\Bigr)\\
    &\subseteq
    \tau_{B}\Bigl(R,[\underline{f}]^{\frac{l}{s+nh+1}}
    [\underline{f}]^{\frac{s+nh+1}{s+nh+1}}\Bigr)\\
    &=
    \tau_{B}\Bigl(R,[\underline{f}]^{\frac{s+nh+l+1}{s+nh+1}}\Bigr)\\
    &= \tau_{B}\Bigl(R,[\underline{f}]^{\frac{l+1}{s+nh+1}}
    [\underline{f}]^{\frac{s_1+h}{s+nh+1}} \cdots
  [\underline{f}]^{\frac{s_n+h}{s+nh+1}}\Bigr),
  \intertext{where the inclusion holds by Proposition \ref{prop:taubmultiplied}
  (with notation as in Definition \ref{def:ms18like}), and the last two
  equalities hold by the unambiguity-type statement in Proposition
  \ref{prop:notambig}.
  By applying our version of the subadditivity theorem (Theorem
  \ref{thm:subadditivity}) $n$ times, we have}
    \MoveEqLeft[5]
    \tau_{B}\Bigl(R,[\underline{f}]^{\frac{l+1}{s+nh+1}}
    [\underline{f}]^{\frac{s_1+h}{s+nh+1}} \cdots
    [\underline{f}]^{\frac{s_n+h}{s+nh+1}}\Bigr)\\
    &\subseteq \tau_{B}\Bigl(R,[\underline{f}]^{\frac{l+1}{s+nh+1}}\Bigr) \cdot
    \prod_{i=1}^n \tau_{B}\Bigl(R,[\underline{f}]^{\frac{s_i+h}{s+nh+1}}\Bigr)\\
    &\subseteq \fm \cdot \prod_{i=1}^n I^{(s_i+1)},
  \end{align*}
  where the last inclusion holds since
  $\tau_{B}(R,[\underline{f}]^{\frac{l+1}{s+nh+1}}) \subseteq \fm$
  by our assumption on $l$ and by applying Proposition \ref{prop:ms18comparison}
  and Theorem \ref{thm:ty23like} when $M = s+nh+1$.
  We can therefore combine these inclusions to show
  \eqref{eq:mainjohnsonhhtyincl}.
\end{proof}

\addtocontents{toc}{\protect\medskip}
\bookmarksetup{startatroot}

\end{document}